\newtheorem{theorem}{Theorem}[section]
\newtheorem{lemma}{Lemma}[section]
\newtheorem{corollary}{Corollary}[section]
\newtheorem{conjecture}{Conjecture}[section]
\newtheorem{definition}{Definition}[section]
\newtheorem{remark}{Remark}[section]
\newtheorem{question}{Question}[section]
\numberwithin{equation}{section}
\def\Z{\Bbb Z}
\def\R{\Bbb R}
\def\C{\Bbb C}
\def\A{\Bbb A}
\def\N{\Bbb N}
\def\D{\Delta}
\def\Om{\Omega}
\def\d{\partial}
\def\a{\alpha}
\def\b{\beta}
\def\g{\gamma}
\def\s{\sigma}
\def\e{\epsilon}
\def\om{\omega}
\title{Causal Holography of Traversing Flows} 
\author{Gabriel Katz}
\address{Massachusetts Institute of Technology,
Department of Mathematics,
77 Massachusetts Avenue,
Cambridge, MA 02139, U.S.A.}
\email{gabkatz@gmail.com}
\begin{document}

\maketitle

\begin{abstract} We study smooth {\sf traversing} vector fields $v$ on compact manifolds $X$ with boundary. A traversing $v$ admits a Lyapunov function $f: X \to \R$ such that $df(v) > 0$. 

We show that the trajectory spaces $\mathcal T(v)$ of {\sf traversally generic} $v$-flows are {\sf Whitney stratified spaces}, and thus admit triangulations amenable to their natural stratifications. Despite being spaces with singularities, $\mathcal T(v)$ retain some residual smooth structure of $X$. 

Let  $\mathcal F(v)$ denote the oriented $1$-dimensional foliation on $X$, produced by a traversing $v$-flow. With the help of a {\sf boundary generic} $v$, we divide the boundary  $\d X$ of $X$ into two complementary compact manifolds, $\d^+X(v)$ and $\d^-X(v)$.
 
Then, for a traversing $v$, we introduce the {\sf causality map} $C_v: \d^+X(v) \to \d^-X(v)$. Our main result claims that, for boundary generic traversing vector fields $v$, the causality map $C_v$ is allows for a reconstruction of the pair $(X, \mathcal F(v))$, up to a homeomorphism  $\Phi: X \to X$ such that $\Phi|_{\d X} = id_{\d X}$. In other words, for a massive class of ODEs, we show that the topology of their solutions, satisfying a given  boundary value problem, is {\sf rigid}. We call these results ``{\sf holographic}" since the $(n+1)$-dimensional $X$ and the un-parameterized  dynamics of the $v$-flow are captured by a single map $C_v$ between two $n$-dimensional screens, $\d^+X(v)$ and $\d^-X(v)$. 

This holography of traversing flows has numerous applications to the dynamics of general flows. Some of them are described in the paper. Others, are just outlined. 
 \end{abstract}

\section{Introduction}

This paper is an extension of the sequence \cite{K1} - \cite{K4}, which studies non-vanishing  gradient-like flows on smooth compact manifolds with boundary.  Our approach emphasizes the interactions of the flow trajectories with the boundary. \smallskip

Let $X$ be a compact connected smooth $(n+1)$-dimensional manifold with boundary. A smooth vector field $v$ on $X$ is called {\sf traversing} if each $v$-trajectory is homeomorphic either to a closed interval, or to a singleton. An equivalent definition of a traversing $v$ is based on the existence of a Lyapunov function $f: X \to \R$ such that $df(v) > 0$ in $X$. In particular, the gradient flow of a Bott-Morse function $f$ is traversing in the compliment to any open neighborhood of its critical set. 
\smallskip

The paper consists of five sections, including the Introduction. \smallskip

{\sf In Section 2}, we introduce various classes of vector fields on manifolds with boundary and summarize their properties, needed for the rest of the paper. They include {\sf traversing, boundary generic}, and {\sf traversally generic} vector fields.\smallskip

{\sf In Section 3}, we employ the semi-local algebraic models for boundary generic and traversally generic vector fields $v$ on $X$ to get a better understanding of the {\sf trajectory space} $\mathcal T(v)$ of the $v$-flow and its intricate stratification by the {\sf combinatorial types} of $v$-trajectories. These types $\om$ belong to an universal poset $\mathbf \Omega^\bullet$,  introduced in \cite{K3}.  They describe the tangency patterns of trajectories to the boundary $\d X$ and resemble the real divisors of real polynomials.  

For traversing flows, $\mathcal T(v)$, despite being singular spaces, retain some surrogate smooth structure (see Definition \ref{def2.2}), which they inherit from $X$. In fact, $\mathcal T(v)$ also shares with $X$ all stable characteristic classes of its surrogate ``tangent bundle" $\tau(\mathcal T(v))$. 
\smallskip

Theorem \ref{th2.2} is the main result of this section. It claims that, for a traversally generic vector field $v$, the trajectory space $\mathcal T(v)$ can be given the structure of {\sf Whitney stratified space} (see Definition \ref{def2.3}).  As a result, for a traversally  generic $v$, the trajectory space $\mathcal T(v)$ admits a triangulation, amenable to its $v$-flow-induced $\mathbf\Omega^\bullet$-stratification (Corollary \ref{cor2.4}).
Therefore, for such a $v$, the trajectory space $\mathcal T(v)$ is a $n$-dimensional compact $\mathbf\Omega^\bullet$-stratified $CW$-complex, homotopy equivalent to $X$ (Corollary \ref{cor2.4}). \smallskip 
Unfortunately, the proof of Theorem \ref{th2.2} is lengthy. The reader, interested only in the main result of the paper, may choose to proceed directly to Section 4. \smallskip

{\sf In Section 4}, we are preoccupied with the following central to our program question:

\emph{``For a traversing vector field $v$ on a compact connected manifold $X$, what kind of residual structure on its boundary $\d X$ allows for a reconstruction of the pair $(X, v)$, say, up to a homeomorphism or a diffeomorphism?"} 
\smallskip

If such a structure on the boundary is available, it deserves to be called {\sf holographic}, since the information about the $(n+1)$-dimensional $v$-dynamics is recorded on a pair of $n$-dimensional records, residing in $\d X$ .

For a traversing field $v$, with the dream of holography in mind,  we introduce the {\sf causality map} $C_v: \d_1^+X(v) \to \d_1^-X(v)$ that takes any point $x \in \d X$, where the field is directed inward of $X$, to the ``next" along the trajectory $\g_x$ point $C_v(x) \in \d X$; at $C_v(x)$ the vector field $v$ is directed outwards.  

In general, the causality map $C_v$ is a \emph{discontinuous} map, with a very particular types of discontinuity. It is this discontinuity that captures the essential topology of $X$!

$C_v$ plays a role somewhat similar to the one played by the classical Poincar\'{e} return map: continuous flow dynamics is reduced to a single map of a lower-dimensional slice (\cite{Te}).
\smallskip 

Let $v_1$ be a traversing  and {\sf boundary generic} (see Definition \ref{boundary_generic}) field on a manifold $X_1$, and let $v_2$ be a traversing  and boundary generic field on a manifold $X_2$, where $\dim(X_1) = \dim(X_2)$. We denote by $\mathcal F(v_i)$ the oriented $1$-dimensional foliation on the manifold $X_i$, produced by the traversing vector field $v_i$ ($i = 1, 2$). 

Theorem \ref{th3.HOLO}---the main result of this paper---claims that any smooth diffeomorphism $\Phi^\d: \d_1X_1 \to \d_1X_2$ which commutes with the causality maps $C_{v_1}$ and $C_{v_2}$, extends to a  homeomorphism (often a smooth diffeomorphism) $\Phi: X_1 \to X_2$. Moreover, $\Phi$ takes each $v_1$-trajectory to a $v_2$-trajectory, thus mapping the $v_1$-oriented $1$-dimensional foliation $\mathcal F(v_1)$ to the $v_2$-oriented foliation $\mathcal F(v_2)$. 

In other words, for a traversing and boundary generic $v$, the causality map $C_v$ allows for a reconstruction of the pair $(X, \mathcal F(v))$, up to a homeomorphism (Corollary \ref{cor3.9}).  So the topology of $X$ and the unparametrized $v$-flow dynamics are \emph{topologically rigid} for the given ``boundary conditions" $C_v: \d_1^+X(v) \to \d_1^-X(v)$. In many cases (perhaps, allways), the reconstruction of $(X, \mathcal F(v))$ is possible up to a smooth diffeomorphism. 
\smallskip

Theorem \ref{th3.HOLO} leads to a novel representation, described in Theorem \ref{th3.4}, of smooth $(n+1)$-manifolds $X$ with spherical boundary. The representation is based on a map $C_v: D^n_+ \to D^n_-$ from one $n$-dimensional ball to another, $n \geq 2$, and captures the topological type of $X$.\smallskip

This topological rigidity has a number of implications for general dynamical systems (which are not necessarily of the gradient type). We summarize them in Theorem \ref{th3.6},  {\sf The Causal Holography Principle}. Vaguely, it states that  the {\sf causality relation} on a generic event horizon $H$ in the space-time space of a given dynamical system determines the compact portion $X$ of the event space, bounded by $H$, and the evolution of the system in $X$, up to a homeomorphism of $X$ which is the identity on $H$.
\smallskip
 
{\sf In  Section 5}, we sketch some applications of the Holographic Causality Theorem \ref{th3.HOLO} to geodesic flows on compact Riemannian manifolds with boundary (Theorem \ref{th4.1}). They revolve around some classical inverse scattering problems and geodesic billiards, as described in \cite{K6} and \cite{K9}. 
\smallskip

Let us conclude this Introduction with one remark which describes a paradoxical tension in our results. On the one hand, the causality maps are typically discontinuous, and that property is their nature. On the other hand, our techniques require a high degree of differentiability of the structures on the boundary, the structures that make the Holography Theorems valid and meaningful.  We would love to understand better the paradox.   

 
\section{Trivia: traversing, boundary generic, and traversally generic vector fields }

For the reader convenience, we start with a review of some properties of vector fields on manifolds with boundary that will be essential for the rest of the paper. The relevant definitions and facts are borrowed from \cite{K1}-\cite{K4} and \cite{K7}. See \cite{K5} for a more relaxed description of our approach to flows on surfaces.
\smallskip

Let $X$ be a compact connected smooth $(n+1)$-dimensional manifold with boundary. 

\begin{definition}\label{traversing}
A vector field $v$ on $X$ is called {\sf traversing} if each $v$-trajectory is ether a closed interval, or a singleton. \hfill $\diamondsuit$
\end{definition} 

In particular, a traversing vector field does not vanish and is of the {\sf gradient type}, i.e., there exists a smooth Lyapunov function $f:X \to \R$ such that $df(v) > 0$ in $X$. Moreover, the converse is true: any non-vanishing gradient-type vector field is traversing \cite{K1}. 

We denote by $\mathcal V_{\mathsf{trav}}(X)$ the space of all traversing fields on $X$. 
\smallskip

For a vector field $v \in \mathcal V_{\mathsf{trav}}(X)$, its trajectory space $\mathcal T(v)$ is homology equivalent  to $X$ (Theorem 5.1, \cite{K3}).  Moreover,  for a traversing field $v$, the trajectory space $\mathcal T(v)$  has an interesting feature:  it comes equipped with a vector $n$-bundle $\tau(\mathcal T(v))$ which plays the role of  ``surrogate tangent bundle". 
\smallskip

Any smooth vector field $v$ on $X$, which does not vanish along the boundary $\d X$, gives rise to a partition $\d_1^+X(v) \cup \d_1^-X(v)$ of the boundary $\d X$ into  two sets: the locus $\d_1^+X(v)$, where the field is directed inward of $X$ or is tangent to $\d X$, and  $\d_1^-X(v)$, where it is directed outwards or is tangent to $\d X$. 

We assume that $v|_{\d X}$, viewed as a section of the quotient  line bundle $T(X)/T(\d X)$ over $\d X$, is transversal to its zero section. This assumption implies that both sets $\d^+_1 X(v)$ and $\d^-_1 X(v)$ are compact manifolds which share a common boundary $\d_2X(v) =_{\mathsf{def}} \d(\d_1^+X(v)) = \d(\d_1^-X(v))$. Evidently, $\d_2X(v)$ is the locus where $v$ is \emph{tangent} to the boundary $\d X$.

Morse has noticed (\cite{Mo}) that, for a generic vector field $v$, the tangent locus $\d_2X(v)$ inherits a similar structure in connection to $\d_1^+X(v)$, as $\d X$ has in connection to $X$. That is, $v$ gives rise to a partition $\d_2^+X(v) \cup \d_2^-X(v)$ of  $\d_2X(v)$ into  two sets: the locus $\d_2^+X(v)$, where the field is directed inward of $\d_1^+X(v)$ or is tangent to $\d_2X(v)$, and  $\d_2^-X(v)$, where it is directed outward of $\d_1^+X(v)$ or is tangent to $\d_2X(v)$. Again, we assume that $v|_{\d_2X(v)}$, viewed as a section of the quotient  line bundle $T(\d X)/T(\d_2X(v))$ over $\d_2X(v)$, is transversal to its zero section.

For generic fields, this structure replicates itself: the cuspidal locus $\d_3X(v)$ is defined as the locus where $v$ is tangent to $\d_2X(v)$; $\d_3X(v)$ is divided into two manifolds, $\d_3^+X(v)$ and $\d_3^-X(v)$. In  $\d_3^+X(v)$, the field is directed inward of $\d_2^+X(v)$ or is tangent to its boundary, in  $\d_3^-X(v)$, outward of $\d_2^+X(v)$ or is tangent to its boundary. We can repeat this construction until we reach the zero-dimensional stratum $\d_{n+1}X(v) = \d_{n+1}^+X(v) \cup  \d_{n+1}^-X(v)$.  

To achieve some uniformity in the notations, put $\d_0^+X =_{\mathsf{def}} X$ and $\d_1X =_{\mathsf{def}} \d X$. 

Thus a  generic vector field $v$ on $X$ should give rise to two  stratifications: 
\begin{eqnarray}\label{eq2.0}
\d X =_{\mathsf{def}} \d_1X \supset \d_2X(v) \supset \dots \supset \d_{n +1}X(v), \nonumber \\ 
X =_{\mathsf{def}} \d_0^+ X \supset \d_1^+X(v) \supset \d_2^+X(v) \supset \dots \supset \d_{n +1}^+X(v), 
\end{eqnarray}
the first one by closed submanifolds, the second one---by compact ones.  Here $\dim(\d_jX(v)) = \dim(\d_j^+X(v)) = n +1 - j$. \smallskip

We will use often the notation ``$\d_j^\pm X$" instead of ``$\d_j^\pm X(v)$" when the vector field $v$ is fixed or its choice is obvious. \smallskip

These considerations motivate a more formal

\begin{definition}\label{boundary_generic} 
Let $X$ be a compact smooth $(n+1)$-dimensional manifold with boundary $\d X \neq \emptyset$, and $v$ a smooth vector field on $X$.  

We say that  $v$  is {\sf boundary generic} if the vector field $v|_{\d X}$ does not vanish and produces a filtrations of $X$ as in (\ref{eq2.0}). Its strata $\{\d_j^+X \subset \d_jX\}_{1 \leq j \leq n+1}$  are defined inductively in $j$ as follows:

\begin{itemize}
\item $\d_0X =_{\mathsf{def}} \d X$, $\d_1X =_{\mathsf{def}} \d X$ \footnote{So $\d_0X$ and $\d_1X$---the base of induction---do not depend on $v$.},
\item $v$, viewed as a section of the tangent bundle $T(X)$, is transversal to its zero section,
\item  for each $k \in [1, j]$, the $v$-generated stratum $\d_kX$ is a closed smooth submanifold of  $\d_{k-1}X$,
\item  the field  $v$, viewed as section of the quotient 1-bundle  $$T_k^\nu =_{\mathsf{def}} T(\d_{k-1}X)/ T(\d_kX) \to \d_kX,$$ is transversal to the zero section of $T_k^\nu \to \d_kX$ for all $k \leq j$. 
\item the stratum $\d_{j+1}X$ is the zero set of the section $v \in T_j^\nu$. 
\item the stratum $\d^+_{j+1}X \subset \d_{j+1}X$ is the locus where $v$ points inside of $\d_j^+X$.
\end{itemize}

We denote the space of boundary generic vector fields on $X$ by the symbol $\mathcal B^\dagger(X)$. \hfill $\diamondsuit$
\end{definition} 

By Theorem 3.4 from \cite{K2} (see also the second bullet of Theorem  6.6 from \cite{K7}), the smooth topological type of the stratification $\{\d_jX(v)\}_j$ is stable under perturbations of $v$ within the space $\mathcal B^\dagger(X)$ of boundary generic fields. The same argument shows that $\{\d_j^+X(v)\}_j$ is stable as well. 

\begin{definition}\label{convex}
We say that a boundary generic vector field $v$ is {\sf convex} if $\d_2^+X(v) = \emptyset$. When  $\d_2^-X(v) = \emptyset$, we say that the vector field $v$ {\sf concave}. 
\hfill $\diamondsuit$
\end{definition}

Note that convexity or concavity of $v$ implies that the locus $\d_3X(v) = \emptyset$. 
 \smallskip 
 
For the rest of the paper, we assume that the field $v$ on $X$ extends to a non-vanishing field $\hat v$ on some open manifold $\hat X$ which properly contains $X$ (see Fig. 6). We treat the extension $(\hat X, \hat v)$ as a {\sf germ} that contains $(X, v)$. One may think of $\hat X$ as being obtained from $X$ by attaching an external collar to $X$ along $\d_1X$. In fact, the treatment of $(X, v)$ will not depend on the germ of extension $(\hat X, \hat v)$, but many constructions are simplified by introducing an extension.
\smallskip

The trajectories $\g$ of a boundary generic vector field $v$ on $X$ interact with the boundary $\d X$ so that each point $a \in \g \cap \d X$ acquires a {\sf multiplicity} $m(a) \in \N$, the order of tangency of $\g$ to $\d X$ at $a$. We associate a {\sf divisor} $$D_\g = \sum_{a \in \g \cap \d X} m(a)\cdot a$$ with each $v$-trajectory $\g$. In fact, for any boundary generic $v$, $m(a) \leq \dim(X)$ and the support of $D_\g$ is finite (\cite{K2}).  
\smallskip

So we associate also a finite ordered sequence $\om(\g) = (\om_1, \om_2, \dots, \om_q)$ of {\sf multiplicities} with each $v$-trajectory $\g$. The multiplicity $\om_i$ is the order of tangency between the curve $\g$ and the hypersurface $\d X$ at the $i^{th}$ point of the finite set $\g \cap \d X$. The linear order in $\g \cap \d X$ is determined by $v$.   

Such sequences form a {\sf poset} $(\mathbf\Om, \succ)$, the partial order ``$\succ$" in $\mathbf\Om$ is defined in terms of two types of elementary operations: {\sf merges} $\{\mathsf M_i\}_i$ and {\sf inserts} $\{\mathsf I_i\}_i$ 
The operation $\mathsf M_i$ merges a pair of adjacent entries $\om_i, \om_{i+1}$ of $\om = (\om_1, \dots, \om_i, \om_{i+1}, \dots, \om_q)$ into a single component $\tilde\om_i = \om_i + \om_{i+1}$, thus forming a new shorter sequence $\mathsf M_i(\om) = (\om_1, \dots, \tilde\om_i, \dots, \om_q)$. The operation $\mathsf I_i$ either insert $2$ in-between $\om_i$ and $\om_{i+1}$, thus forming a new longer sequence $\mathsf I_i(\om) = (\dots, \om_i, 2, \om_{i+1}, \dots)$, or, in the case of $\mathsf I_0$, appends $2$ before the sequence $\om$, or,  in the case $\mathsf I_q$,  appends $2$ after the sequence $\om$. 

 So the {\sf merge} operation 
$\mathsf M_j: \mathbf\Om\to \mathbf\Om$ \;  
sends $\om = (\om_1,\ldots , \om_\ell)$ to the composition $$\mathsf M_j(\om) = (M_j(\om)_1,\ldots, M_j(\om)_{\ell-1}),$$ 
where, for any $j \geq \ell$, one has $\mathsf M_j(\om) = \om$,  and for $1 \leq j < \ell$, one has 
\begin{eqnarray}\label{eq2.M} 
\mathsf M_j(\omega)_i & = & \omega_i \; \textrm{ if }\, i < j,\\ \nonumber
\mathsf M_j(\omega)_j  & = & \omega_j + \omega_{j+1},\\ \nonumber
\mathsf M_j(\omega)_i  & = & \omega_{i + 1} \; \textrm{ if }\, i+1 < j \leq  \ell-1.
\end{eqnarray}

Similarly, we introduce the {\sf insert} operation 
$\mathsf I_j: \mathbf \Om \to\mathbf \Om$ that 
sends $\om = (\om_1,\ldots , \om_\ell)$ to the composition $\mathsf I_j(\om) = (I_j(\om)_1,
\ldots, I_j(\om)_{\ell+1}),$ where for any $j > \ell+1$,  one has $\mathsf I_j(\om) = \om$, and  for $1 \leq j \leq \ell+1$, one has
\begin{eqnarray}\label{eq2.I}  
\mathsf I_j(\omega)_i & = & \omega_i\; \textrm{ if }\, i < j, \\ \nonumber
\mathsf I_j(\omega)_j & = & 2,\\ \nonumber
\mathsf I_j(\omega)_i & = & \omega_{i - 1} \; \textrm{ if }\, j \leq i \leq \ell+1. 
\end{eqnarray}


We define $\om \succ \om'$ if one can produce $\om'$ from $\om$ by applying a sequence of these elementary operations.  
\smallskip

For each trajectory $\g$ of a boundary generic and traversing $v$, we introduce two important quantities:
\begin{eqnarray}\label{eq2.1}
m(\g) =_{\mathsf{def}} \sum_{a \in \g \cap \d_1X }\; m(a), \quad \text{and} \quad m'(\g) =_{\mathsf{def}} \sum_{a \in \g \cap \d_1X }\; (m(a) -1),
\end{eqnarray} 
the {\sf multiplicity} and the {\sf reduced multiplicity}.
 
Similarly, for a sequence $\omega = (\omega_1, \omega_2, \, \dots \, ,  \omega_q)$, we introduce the {\sf norm} and the {\sf reduced norm} of $\omega$ by the formulas: 

\begin{eqnarray}\label{eq2.2}
|\omega| =_{\mathsf{def}} \sum_i\; \omega_i \quad \text{and} \quad |\omega|'  =_{\mathsf{def}} \sum_i\; (\omega_i -1).
\end{eqnarray}
Note that $q$, the cardinality of the {\sf support} of $\omega$, is equal to $|\omega| - |\omega|'$.
\smallskip

For boundary generic and traversing vector fields $v$, the trajectory space $\mathcal T(v)$ is stratified by subspaces, labeled by the elements $\om = (\om_1, \dots , \om_q)$ of an universal poset $\mathbf\Om^\bullet$. Its elements form a subset of $\mathbf\Om$, but not a sub-poset (see \cite{K3} for the accurate definition of the partial order $\succ_\bullet$ in $\mathbf\Om^\bullet$). For $q > 1$, the first and the last entries of $\om \in \mathbf\Om^\bullet$ are \emph{odd} positive integers, the rest are \emph{even}.  When $q =1$, $\om=(\om_q)$ must be even. For a boundary generic $v$, each $\om_i \leq \dim(X)$.  
\smallskip
\smallskip

In this paper, we consider also an important subclass of traversing and boundary generic fields, which we call {\sf traversally  generic} (see Definition \ref{traversally_generic} below or Definition 3.2 from \cite{K2}). Such fields admit special flow-adjusted coordinate systems, in which the boundary is given by quite special polynomial equations (see formula (\ref{eq_dX})) and the trajectories are parallel to the preferred coordinate axis (see  \cite{K2}, Lemma 3.4). \smallskip

Given a boundary generic and traversing vector field $v$, for each trajectory $\g$, consider the finite set $\g \cap \d_1X = \{a_i\}_i$ and the collection of tangent spaces $\{T_{a_i}(\d_{j_i}X^\circ)\}_i$ to the pure strata $\{\d_{j_i}X^\circ\}_i$. Each space $T_{a_i}(\d_{j_i}X^\circ)$ is transversal to the curve $\g$.  
 \smallskip 

Let $S$ be a local transversal section of the $\hat v$-flow at a point $a_\star \in \g$, and let $\mathsf T_\star$ be the  space tangent to $S$ at $a_\star$. Each space $T_{a_i}(\d_jX^\circ)$, with the help of the $\hat v$-flow, determines a vector subspace $\mathsf T_i = \mathsf T_i(\g)$ of  $\mathsf T_\star$. It is the image of  the tangent space $T_{a_i}(\d_jX^\circ)$ under the composition of two maps: 

(1) the differential of the $v$-flow-generated diffeomorphism that maps $a_i$ to $a_\star$, and

(2) the linear  projection $T_{a_{\star}}(X) \to \mathsf T_\star$, whose kernel is generated by $v(a_\star)$. 
\smallskip

The configuration $\{\mathsf T_i\}$ of {\sf affine} subspaces  $\mathsf T_i \subset \mathsf T_\star$ is called {\sf generic} (or {\sf stable}) when all the multiple intersections of spaces from the configuration have the least possible dimensions, consistent with the dimensions of $\{\mathsf T_i\}$. In other words, $$\textup{codim}(\bigcap_{s} \mathsf T_{i_s},  \mathsf T_\star) = \sum_s \textup{codim}(\mathsf T_{i_s},  \mathsf T_\star)$$ for any subcollection $\{\mathsf T_{i_s}\}$ of spaces from the list $\{\mathsf T_i\}$.

Consider the case when $\{\mathsf T_i\}$ are \emph{vector} subspaces of $\mathsf T_\star$.  If we interpret each $\mathsf T_i$ as the kernel of a linear epimorphism $\Phi_i:  \mathsf T_\star \to \R^{n_i}$, then the property of  $\{\mathsf T_i\}$ being generic can be reformulated as the property of  the direct product map $\prod_i \Phi_i:  \mathsf T_\star \to \prod_i  \R^{n_i}$ being an epimorphism.  In particular, for a generic configuration of affine subspaces, if a point  belongs to several  $\mathsf T_i$'s, then the sum of their codimensions $n_i$ does not exceed the dimension of the ambient space $\mathsf T_\star$. 
\smallskip

The definition below resembles and is inspired by the ``{\sf Condition NC}" imposed on, so called, {\sf Boardman maps} between smooth manifolds (see \cite{GG}, page 157, for the relevant definitions). In fact, for generic traversing vector fields $v$, the $v$-flow delivers germs of Boardman maps $p(v, \g): \d_1X \to \R^n$, available in the vicinity of each trajectory $\g$. Here $\R^n$ is identified with a transversal section of the flow in the vicinity  of $\g$.

\begin{definition}\label{traversally_generic} A traversing field $v$ on $X$ is called {\sf traversally generic} if: 
\begin{itemize}
\item  the field is boundary generic in the sense of Definition \ref{boundary_generic},
\item for each $v$-trajectory $\g \subset X$ (not a singleton), the collection of  subspaces $\{\mathsf T_i(\g)\}_i$  is generic in $\mathsf T_\star$: that is, the obvious quotient map $\mathsf T_\star \to \prod_i \big(\mathsf T_\star/ \mathsf T_i(\g)\big)$ is surjective.   
\end{itemize}

We denote by $\mathcal V^\ddagger(X)$ the space of all traversally generic fields on $X$. \hfill $\diamondsuit$
\end{definition}


\begin{remark}\label{rem2.1} 
In particular, the second bullet in Definition \ref{traversally_generic} implies the inequality $$\sum_i \textup{codim}(\mathsf T_i(\g), \mathsf T_\star) \leq \dim(\mathsf T_\star) = n.$$   In other words, for traversally generic fields, the reduced multiplicity of each trajectory $\g$ satisfies the inequality
\begin{eqnarray}\label{eq3.4}
m'(\g) = \sum_i (j_i - 1) \leq n.
\end{eqnarray}

Evidently, the property of the configuration $\{\mathsf T_i(\g)\}_i$ being generic in $\mathsf T_\star$ does not depend on the choice of the point $a_\star \in \g$ and the smooth transversal flow section $S$ at $a_\star$.
\end{remark}

So all sufficiently close (in the $C^\infty$-topology) vector fields to a traversally generic field will remain traversally generic. Moreover, by Theorem 3.5 from \cite{K2}, the space $\mathcal V^\ddagger(X)$ is open and \emph{dense} in $\mathcal V_{\mathsf{trav}}(X)$. This property of $\mathcal V^\ddagger(X)$ will be of great importance for our endeavor. 
\smallskip

For traversally  generic vector fields $v$, the trajectory space $\mathcal T(v)$ is stratified by subspaces, labeled by the elements $\omega$ of another \emph{universal subposet} $\mathbf \Om^\bullet_{'\langle n]} \subset \mathbf \Om^\bullet$, defined by the constraint $|\om|' \leq n$. It depends only on $\dim(X) = n+1$ (see \cite{K3} for the definition and properties of $\mathbf \Om^\bullet_{'\langle n]}$). 
\smallskip

Let us revisit the stratum $\d_jX =_{\mathsf{def}} \d_jX(v)$, the locus of points $a \in \d_1X$ such that the multiplicity of the $v$-trajectory $\g_a$ through $a$ at $a$ is greater than or equal to $j$. This locus has an alternative description in terms of  an auxiliary smooth function $z: \hat X \to \R$ that satisfies the following three properties:
\begin{eqnarray}\label{eq2.3}
\end{eqnarray}
\begin{itemize}
\item $0$ is a regular value of $z$,   
\item $z^{-1}(0) = \d X$, 
\item $z^{-1}((-\infty, 0]) = X$. 
\end{itemize}

In terms of $z$, the locus $\d_jX$ is defined by the equations: 
$$\{z =0,\; \mathcal L_vz = 0,\; \dots, \;  \mathcal L_v^{(j-1)}z = 0\},$$
where $\mathcal L_v^{(k)}$ stands for the $k$-th iteration of the Lie derivative operator $\mathcal L_v$ in the direction of $v$ (see \cite{K2}). 

The pure stratum $\d_jX^\circ \subset \d_jX$ is defined by the additional constraint  $\mathcal L_v^{(j)}z \neq 0$. The locus $\d_jX$ is the union of two loci: 

$\mathbf {(1)}$ $\d_j^+X$, defined by the constraint  $\mathcal L_v^{(j)}z \geq  0$, and 

$\mathbf{(2)}$ $\d_j^-X$, defined by the constraint  $\mathcal L_v^{(j)}z \leq  0$. 

The two loci, $\d_j^+X$ and $\d_j^-X$, share the common boundary $\d_{j+1}X$.
\smallskip

The following  lemma is on the level of definitions.

\begin{lemma}\label{boundary_generic_A}  A vector field $v$ on a smooth $(n+1)$-manifold $X$ with boundary is boundary generic if and only if, for each $j \in [1, n+1]$, the differential $j$-form
\begin{eqnarray}\label{Xi_j} 
\Xi_j(z, v) := dz \wedge \mathcal L_v(dz) \wedge \ldots (\mathcal L_v)^{j-1}(dz)
\end{eqnarray}
does not vanish along the locus $\d_jX(v)$. \hfill $\diamondsuit$
\end{lemma}

The next lemma may be found in \cite{Mor} or in \cite{K2}. 

\begin{lemma}\label{local_boundary_generic} 
 Let $v$ be a boundary generic vector field on a $(n+1)$-dimensional smooth manifold $X$ with boundary. Let a $v$-trajectory $\g_\star$ be tangent to $\d_1X$ at a point $b \in \g_\star \cap \d_1X$ with the order of tangency $j \in [1, n+1]$. 
 
In the vicinity of $b$ in $X$, there exists a system of smooth coordinates 
$\{u, \vec x, \vec y\} := \{u, x_0, \dots , x_{j-2}, y_1, \dots y_{n-j+1}\}$ such that:
\begin{itemize}
\item the boundary $\d_1X$ is given by the equation
\begin{eqnarray}\label{eqBG}
P(u, \vec x) := u^j +  \sum_{\ell=0}^{j-2} x_\ell\, u^\ell = 0,
\end{eqnarray}
and $X$ by the inequality $P(u, \vec x) \leq 0$,
\item each $v$-trajectory is given by freezing the coordinates $\{\vec x, \vec y\},$ subject to the constraint $P(u, \vec x) \leq 0$. \hfill $\diamondsuit$
\end{itemize}
\end{lemma}

Lemma \ref{local_boundary_generic} implies the next lemma (see \cite{K2}, Lemma 3.4, or \cite{K7}, Lemma 6.4, for its validation).

\begin{lemma}\label{lem2.A} Let $X$ be a $(n+1)$-dimensional compact connected smooth manifold $X$ with boundary and $v$ a traversing boundary generic vector field on $X$. Let $\g$ be a $v$-trajectory of a combinatorial type $\om$. Then there is a $\hat v$-adjusted neighborhood $U \subset \hat X$ of $\g$ and a system of coordinates $(u, \vec x): U \to \R \times \R^n$ such that $U$ is given by the inequalities $P(u, \vec  x) \leq 0$, $\|\vec x\| < \e$, where $$P(u, \vec x) := u^{|\om|} + \sum_{\ell =0}^{|\om|-1} \phi_\ell(\vec x)\, u^\ell$$ 
and $\{\phi_\ell(\vec x)\}_\ell$ are smooth functions. The real divisor of $P(u, \vec 0)$ has the combinatorial type $\om$. Each $\hat v$-trajectory in $U$ is given by freezing the coordinate $\vec x \in \R^n$, subject to the constraint $P(u, \vec x) \leq 0$.
\hfill $\diamondsuit$
\end{lemma}

Let $v$ be a traversing, boundary generic vector field. For each $v$-trajectory $\g$ and each point $a_i \in \g \cap \d X$ of multiplicity $j_i := j(a_i)$, we consider the form $\Xi_j(z, v)|_{a_i} \in \bigwedge^{j_i} T^\ast_{a_i}X$ (see (\ref{Xi_j})) and spread it via the $v$-flow along $\g$. We denote by $\tilde\Xi_{j_i}(z, \g)$ the resulting section ($j_i$-form) of the bundle $\bigwedge^{j_i} T^\ast X|_\g$. Lemma \ref{local_boundary_generic} admits the following interpretation.

\begin{lemma}\label{traversally_generic_A}  A traversing and boundary generic vector field $v$ on a smooth $(n+1)$-manifold $X$  with boundary is traversally generic if and only if, for each trajectory $\g$, the $m(\g)$-dimensional differential form
$$\tilde\Xi(z, \g) =_{\mathsf{def}} \bigwedge_{i=1}^s \tilde\Xi_{j_i}(z, \g) \in \bigwedge^{|\om_\g|}T^\ast X \big |_\g$$ 
(where $s = \#(\g \cap \d X)$ and $|\om_\g| = \sum_{i=1}^s j_i$) does not vanish along $\g$.
\hfill $\diamondsuit$
\end{lemma}

For a \emph{traversally generic} $v$ (see Definition \ref{traversally_generic}) on a $(n+1)$-dimensional $X$, the vicinity $U \subset \hat X$ of each $v$-trajectory $\g$ of a combinatorial type $\omega \in \mathbf\Om^\bullet$ has a special coordinate system $(u, x, y): U \to \R\times \R^{|\omega|'} \times \R^{n-|\omega|'}.$ By by Lemma 3.4 from \cite{K2} (see also Lemma 6.4 in \cite{K7}), in these coordinates, the boundary $\d_1X := \d X$ is given by the polynomial equation
\begin{eqnarray}\label{eqVERSAL}
 P(u, x) := \prod_i \big[(u-\a_i)^{\omega_i} + \sum_{\ell = 0}^{\omega_i-2} x_{i, \ell}(u -\a_i)^\ell \big] = 0
 \end{eqnarray}
 of an even degree $|\omega|$ in $u$. Here $x =_{\mathsf{def}} \{ x_{i, l}\}_{i,\ell}$,  and the numbers $\{\a_i\}_i$ are all distinct real roots of the polynomial $P(u, 0)$,  ordered so that $\a_i < \a_{i+1}$ for all $i$.  

At the same time, $X$ is given by the polynomial inequality $\{P(u, x) \leq 0\}$.  Each $v$-trajectory in $U$ is produced by freezing all the coordinates $x, y$, while letting $u$ to be free. Formula (\ref{eqVERSAL}) should be compared with formula (\ref{eqBG}).

In fact, by choosing $\a_i = i$, we may rewrite this equation for $\d X$ in $U$ as 
\begin{eqnarray}\label{eq_dX}
 \wp_\om(u, x) := \prod_i \big[(u-i)^{\omega_i} + \sum_{\ell = 0}^{\omega_i-2} x_{i, \ell}(u -i)^\ell \big] = 0
 \end{eqnarray}
 (where $|\om|' \leq \dim X -1$, $|\om| \leq 2\cdot \dim X$, and $|\om| \equiv 0 \mod 2$.
 That equation may be viewed as the working definition of a traversally generic vector field.


\section{On the trajectory spaces for traversally generic flows}

Let $v$ be a traversing vector field. By collapsing each $v$-trajectory to a singleton, we produce the trajectory space $\mathcal T(v)$, equipped with the quotient topology.\smallskip

We denote by $X(v, \omega)$ the union of $v$-trajectories whose patterns of tangency to $\d_1 X := \d X$ are of a given combinatorial type $\omega \in \mathbf \Omega^\bullet$. We use the notation $X(v, \omega_{\succeq})$ for its closure $\cup_{\omega' \preceq \omega}\; X(v, \omega')$.
\smallskip

For a traversally generic $v$, each pure stratum $\mathcal T(v, \omega) \subset \mathcal T(v)$ is an open smooth manifold, and as such has a ``conventional" tangent bundle. In particular, the pure strata of maximal dimension $n$ have tangent bundles. It turns out that these ``honest" tangent $n$-bundles extend across the singularities of the space $\mathcal T(v)$ to form a $n$-bundle $\tau(\mathcal T(v))$ over $\mathcal T(v)$! However, at the singularities, no  exponential map (that takes a vector from  $\tau(\mathcal T(v))$ to a point in $\mathcal T(v)$) is available---the surrogate tangent bundle $\tau(\mathcal T(v))$ does not reflect faithfully the local geometry of the trajectory space $\mathcal T(v)$.

In order to define the dual of the bundle $\tau(\mathcal T(v))$ intrinsically, we need to consider a surrogate of  smooth structure on the singular space $\mathcal T(v)$. 

\begin{definition}\label{def2.1} Let $v$ be a smooth traversing vector field on a smooth compact and connected manifold $X$.  Let $\Gamma: X \to \mathcal T(v)$ be the projection that takes each point $x \in X$ to the trajectory $\g_x \in \mathcal T(v)$ that contains $x$. 

We say that a function $h: \mathcal T(v) \to \R$ is {\sf smooth}, if the composition $h \circ \Gamma$ is smooth on $X$. 

We denote by $C^\infty(\mathcal T(v))$ the algebra of all smooth functions on the space $\mathcal T(v)$. \hfill 
$\diamondsuit$
\end{definition}

\begin{definition}\label{def2.2} Let $v_1, v_2$ be two traversing  vector fields on manifolds $X_1, X_2$, respectively.
\begin{itemize}
\item A map $\Phi: \mathcal T(v_1) \to \mathcal T(v_2)$ is called {\sf smooth}, if for any function $h$ from $C^\infty(\mathcal T(v_2))$, its pull-back $\Phi^\ast(h) \in C^\infty(\mathcal T(v_1))$. 
\item A bijective map $\Phi: \mathcal T(v_1) \to \mathcal T(v_2)$ is called a {\sf smooth diffeomorphism}, is both $\Phi$ and $\Phi^{-1}$ are smooth.   \hfill $\diamondsuit$
\end{itemize} 
\end{definition}

For any traversing field $v$,  the algebra $C^\infty(\mathcal T(v))$ of smooth functions on the trajectory space $\mathcal T(v)$ can be identified with the subalgebra of $C^\infty(X)$, formed by functions $f: X \to \R$ with the property $\{\mathcal L_v(f) = df(v) = 0\}$, where $\mathcal L_v$ stands for the $v$-directional derivative\footnote{This property does not depend on an extension $(\hat X, \hat v)$ of $(X, v)$.}. Such functions are constant along each trajectory $\g \subset X$.

We denote by $C^\infty_\g(\mathcal T(v))$ the algebra of germs of smooth functions from $C^\infty(\mathcal T(v))$ at a given point $\g \in \mathcal T(v)$. Let  $\mathsf m_\g(\mathcal T(v)) \lhd C^\infty_\g(\mathcal T(v))$ be the maximal ideal, formed by the the germs  that vanish at $\g$, and let $\mathsf m_\g^2(\mathcal T(v))$ be the square of the ideal $\mathsf m_\g(\mathcal T(v))$. 

Then the quotients $\mathsf m_\g(\mathcal T(v))/ \mathsf m_\g^2(\mathcal T(v))$ are real $n$-dimensional vector spaces. Indeed, since the pull-back of smooth functions on $\mathcal T(v)$  are the smooth functions on $X$ that are constants along each trajectory $\g$, the quotient  $\mathsf m_\g(\mathcal T(v))/ \mathsf m_\g^2(\mathcal T(v))$  can be canonically identified  with the quotient $\mathsf m_x(S)/ \mathsf m^2_x(S)$.  Here $S$ is a germ of a smooth transversal section of the $\hat v$-flow at $x = \g \cap S$, and $\mathsf m_x(S)$ denotes the maximal ideal in the algebra $C^\infty(S)$, an ideal comprised of functions that vanish at $x$.  It is well-known that $\mathsf m_x(S)/\mathsf m^2_x(S)$ can be canonically identified with the cotangent space $T_x^\ast(S)$ via the correspondence $f \Rightarrow df$, where the germ of $f: S \to \R$ at $x$ belongs to the ideal $\mathsf m_x(S)$. Therefore the spaces $$\tau_\g^\ast(\mathcal T(v)) =_{\mathsf{def}} \mathsf m_\g(\mathcal T(v))/ \mathsf m_\g^2(\mathcal T(v))$$ form a vector $n$-bundle $\tau^\ast(\mathcal T(v))$ over $\mathcal T(v)$. It is dual to $\tau(\mathcal T(v))$ under the construction. The pull-back 
$\Gamma^\ast\big(\tau^\ast(\mathcal T(v))\big)$  can be identified with the subbundle $\tau^\ast(v)$ of the cotangent bundle $T^\ast(X)$, formed by the ``horizontal" $1$-forms $\a$ such that $\a(v) = 0$ and $\mathcal L_v(\a) = 0$. The identification is via  the correspondence $\Gamma^\ast(f) \Rightarrow d(\Gamma^\ast(f))$, where $f \in \mathsf m_\g(\mathcal T(v))$.

Now we \emph{define} $\tau(\mathcal T(v))$ as the dual bundle of $\tau^\ast(\mathcal T(v))$. 
\smallskip

Let $(11) \in \mathbf \Omega^\bullet_{'\langle n]}$ denote the unique maximal element of the poset; it labels the trajectories that intersect the boundary $\d X$ only at a pair of distinct points, where they are transversal to the boundary.  

\begin{lemma}\label{lem2.1} For any traversing field $v$, the tangent bundles to the components of the maximal stratum $\mathcal T(v, (11))$ extend to a $n$-dimensional vector bundle $\tau(\mathcal T(v))$ over the trajectory space $\mathcal T(v)$. 

Moreover, for a traversally  generic field $v$ and each element $\omega \in \mathbf \Omega^\bullet_{'\langle n]}$, the tangent bundle of the pure stratum $\mathcal T(v, \omega)$ embeds in $\tau(\mathcal T(v))|_{\mathcal T(v, \omega)}$ as a subbundle with a canonically trivialized complement. 
\end{lemma}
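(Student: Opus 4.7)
The plan is to first construct the \emph{dual} bundle $\tau^\ast(\mathcal T(v))$ as a locally trivial rank-$n$ vector bundle with fiber $\mathsf m_\g(\mathcal T(v))/\mathsf m_\g^2(\mathcal T(v))$ at $\g$, and then set $\tau(\mathcal T(v)) := \tau^\ast(\mathcal T(v))^\ast$. Local triviality at an arbitrary $\g \in \mathcal T(v)$ will be verified by choosing a germ of smooth transversal section $S$ to $\hat v$ at a point $x$ on the extended trajectory $\hat\g \subset \hat X$; since $\hat v$ is non-vanishing, every nearby $\hat v$-trajectory meets $S$ exactly once, yielding a bijection between an open neighborhood $U_\g \subset \mathcal T(v)$ and an open neighborhood $V_x \subset S$. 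Pull-back along this bijection identifies $C^\infty(\mathcal T(v))|_{U_\g}$ with $C^\infty(V_x)$, so $\mathsf m_\g/\mathsf m_\g^2 \cong T_x^\ast S$, which is $n$-dimensional. Compatibility between different choices of $(S,x)$ on $\hat\g$ is supplied by the $\hat v$-flow, which transports one transverse slice smoothly onto another; the resulting transition data assemble $\tau^\ast(\mathcal T(v))$ into a genuine smooth rank-$n$ vector bundle. For the top-stratum assertion, on $\mathcal T(v,(11))$ the projection $\Gamma$ is a local submersion onto a smooth $n$-manifold, identified via such a slice, so the conventional tangent bundles of the components of $\mathcal T(v,(11))$ assemble into the restriction $\tau(\mathcal T(v))|_{\mathcal T(v,(11))}$.

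For the second claim I would appeal to the traversally generic semi-local model (\cite{K2}, Lemma 3.4). Near a trajectory $\g$ of combinatorial type $\omega = (\omega_1, \dots, \omega_\ell)$, the germ $(\hat X, \hat v)$ admits flow-adjusted coordinates in which $\hat v = \d/\d y$ and, near each tangency point $a_i \in \g \cap \d_1 X$, the defining function $z$ assumes the universal $\omega_i$-unfolding shape with parameters $\mathbf u_i := (u_{i,0}, \dots, u_{i,\omega_i-2})$; the remaining $n-|\omega|'$ coordinates $\mathbf x := (x_1, \dots, x_{n-|\omega|'})$ parameterize $\mathcal T(v, \omega)$ itself. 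A transverse slice $S$ carrying coordinates $(\mathbf x, \mathbf u_1, \dots, \mathbf u_\ell)$ identifies $\tau^\ast(\mathcal T(v))|_{U_\g}$ with the span of $\{d\mathbf x,\, d u_{i,j}\}$. The stratum $\mathcal T(v,\omega) \cap U_\g$ is cut out by $\{\mathbf u_1 = \cdots = \mathbf u_\ell = 0\}$; its tangent bundle is annihilated by $\{du_{i,j}\}$, and the dual frame $\{\d/\d u_{i,j}\}$ furnishes a trivialization of a complement inside $\tau(\mathcal T(v))|_{\mathcal T(v,\omega)}$, of the asserted rank $|\omega|' = \sum_i(\omega_i - 1)$.

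The hard part will be to promote this chart-wise trivialization to a \emph{canonical} one, independent of the chosen semi-local model. My strategy is to interpret the unfolding coefficients $u_{i,j}$ intrinsically. Given a nearby $\g' \in \mathcal T(v,\omega)$, its perturbed tangency point $a'_i$ is detected as the unique critical point of $z|_{\g'}$ of multiplicity $\omega_i$ in the appropriate cluster (where $\mathcal L_{\hat v}^{(\omega_i)}z(a'_i) \neq 0$); the Taylor expansion of $z|_{\g'}$ in the $\hat v$-time parameter centered at $a'_i$, normalized by $\mathcal L_{\hat v}^{(\omega_i)}z(a'_i)$, produces the coefficients $u_{i,j}$ as its subleading terms. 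The residual freedom in the choice of $z$ subject to (\ref{eq2.3}) is $z \mapsto e^{f} z$ for smooth $f$, and such rescalings modify the $u_{i,j}$ only to second order in $\mathbf u$ at $\mathbf u = 0$. Hence they descend to well-defined linear functionals on the normal fibers of $\mathcal T(v,\omega) \hookrightarrow \mathcal T(v)$, and dually the $|\omega|'$ vector fields $\d/\d u_{i,j}$ form a canonical frame of the complementary subbundle, concluding the argument.
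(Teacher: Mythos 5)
Your architecture closely tracks the paper's: both build $\tau^\ast(\mathcal T(v))$ from $\mathsf m_\g/\mathsf m_\g^2$, establish local triviality through transverse slices and the $\hat v$-flow, and read off a chart-wise frame of the normal bundle from the flow-adjusted coordinates of Lemma 3.4 in \cite{K2}. The paper's proof of Lemma \ref{lem2.1}, however, does not attempt to prove canonicality of that frame inside the lemma: it flags the existence of a preferred trivialization of $\tau(\mathcal T(v))|_{\mathcal T(v,\omega)}/\tau(\mathcal T(v,\omega))$ as the only non-trivial point and defers it to the last bullet of Theorem \ref{th2.1}, where the conormal frame is assembled from the covectors $\{dz|_{a_i}, \mathcal L_v(dz)|_{a_i}, \dots, \mathcal L^{\omega_i-2}_v(dz)|_{a_i}\}$ propagated along $\g$ by the flow-generated diffeomorphisms $\phi_{a_i,x}$, after checking that these $1$-forms vanish on $v$ and are flow-invariant.

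Your substitute for that step has a genuine gap. You propose to recover the unfolding coefficients $u_{i,j}$, $0 \le j \le \omega_i-2$, from the normalized Taylor expansion of $z|_{\g'}$ at the multiplicity-$\omega_i$ critical point $a'_i$, for a nearby $\g' \in \mathcal T(v,\omega)$. But on $\mathcal T(v,\omega)$ the unfolding parameters $u_{i,j}(\g')$ vanish identically---they are exactly the transverse coordinates cutting out the stratum---whereas the normalized ``subleading'' Taylor coefficients of $z|_{\g'}$ at $a'_i$ live in degrees $\omega_i+1, \omega_i+2, \dots$, are generically nonzero, and are simply different quantities. What the lemma requires are canonical linear functionals on the normal bundle of $\mathcal T(v,\omega)$ in $\mathcal T(v)$, i.e.\ first-order data for deformations of $\g$ \emph{off} the stratum; but once $\g' \notin \mathcal T(v,\omega)$ the tangency cluster near $a_i$ splits into points of lower multiplicity, so ``the unique critical point of $z|_{\g'}$ of multiplicity $\omega_i$'' no longer exists and your detection device is undefined precisely where it must operate. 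The paper avoids this trap by never trying to normalize the $u_{i,j}$ directly: the covectors $\mathcal L^k_v(dz)$ are defined on all of $\hat X$, the identity $\mathcal L_v(dz) = d(v\,\rfloor\,dz)$ together with the vanishing of $\mathcal L^k_v z$ on $\d_{k+1}X$ shows they are $v$-horizontal along $\g$, and the flow-invariance makes them descend to a frame of $\nu^\ast(\mathcal T(v,\omega))$. You should either adopt that mechanism or re-anchor your functionals so that they continue to make sense off the stratum.
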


\begin{proof} We already have observed that the pull-back  $\Gamma^\ast(\tau^\ast(\mathcal T(v)))$ of the cotangent bundle $\tau^\ast(\mathcal T(v))$ can be identified with the bundle $\tau^\ast(v)$ of the flow-invariant $1$-forms on $X$ that vanish on $v$.

The map $\Gamma:  X(v, (11)) \to \mathcal T(v, (11))$ is a fibration with a closed segment for the fiber. Therefore $\Gamma$ admits a smooth section $S_{(11)} \subset X(v, (11))$ which is transversal to the $v$-trajectories.  Consider a decomposition of the $(n+1)$-bundle $T(X)|_{S_{(11)}}$ into the tangent $n$-bundle $T(S_{(11)})$ and a line bundle $L$ tangent to the $v$-trajectories. With the help of this decomposition, the cotangent bundle $T^\ast(S_{(11)})$ can be identified with the restriction $\tau^\ast(v)|_{S_{(11)}}$  of  $\tau^\ast(v)$ to $S_{(11)}$.
Using the isomorphism  $\tau^\ast(v)|_{S_{(11)}} \approx \Gamma^\ast(\tau^\ast(\mathcal T(v)))|_{S_{(11)}}$, we identify the cotangent  bundle $T^\ast(S_{(11)})$ with the bundle $\tau^\ast(\mathcal T(v))|_{S_{(11)}}$, a bundle that evidently is defined on the whole space $\mathcal T(v)$.
\smallskip

A similar conclusion holds for any traversally generic vector field $v$ \footnote{Here perhaps a much weaker assumption about $v$ will do.} and each $\omega \in \Omega^\bullet_{'\langle n]}$: by Lemma 3.4 from \cite{K2}, the map $\Gamma:  X(v, \omega) \to \mathcal T(v, \omega)$ is a fibration with its base being an open smooth $(n - |\omega|')$-manifold and with a closed segment for the fiber, the fiber being consistently oriented by $v$. Therefore $\Gamma$ admits a smooth section $S_\omega$. The cotangent bundle $\tau^\ast(S_\omega)$ can be identified with the cotangent bundle $\tau^\ast(\mathcal T(v, \omega))|_{\mathcal T(v, \omega)}$, a bundle that embeds  into the bundle $\tau^\ast(\mathcal T(v))$. 

So the only non-trivial statement of the lemma is the existence of a preferred trivialization in the quotient bundle $\tau(\mathcal T(v))|_{\mathcal T(v, \omega)}\,/ \tau(\mathcal T(v, \omega))$.  It follows from the last claim of Theorem \ref{th2.1} below.  Thus  $\Psi: \tau(\mathcal T(v, \omega)) \oplus \underline \R^{|\omega|'}\; \approx \; \tau(\mathcal T(v))|_{\mathcal T(v, \omega)},$
where the bundle isomorphism $\Psi$ is canonically defined by $v$. 
\end{proof}
\smallskip

\begin{corollary}\label{cor2.1} For a traversing vector field $v$ on $X$, the stable characteristic classes of the tangent bundles $\tau(\mathcal T(v))$ and $\tau(X)$ coincide via the cohomological isomorphism induced by the projection $\Gamma: X \to \mathcal T(v)$.
\end{corollary}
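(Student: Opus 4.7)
The plan is to produce a direct stable bundle isomorphism $\tau(X)\cong \Gamma^{\ast}(\tau(\mathcal T(v)))\oplus\underline{\R}$ and then combine it with the homotopy equivalence $\Gamma:X\to\mathcal T(v)$ (noted just before Lemma \ref{lem2.1} and a special case of Theorem 5.1 of \cite{K3}) to conclude that the stable characteristic classes match via the induced cohomology isomorphism $\Gamma^{\ast}$. The key input, already recorded in the paragraph preceding Lemma \ref{lem2.1}, is the canonical identification $\Gamma^{\ast}(\tau^{\ast}(\mathcal T(v)))\cong \tau^{\ast}(v)$, where $\tau^{\ast}(v)\subset T^{\ast}(X)$ is the subbundle of horizontal, $\hat v$-flow-invariant $1$-forms.

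To produce the splitting, I would exploit the traversing (hence non-vanishing) character of $v$: the line subbundle $L=\langle v\rangle\subset T(X)$ is globally trivialized by $v$ itself. Dualizing, one obtains a short exact sequence
\[
0\longrightarrow H^{\ast}\longrightarrow T^{\ast}(X)\longrightarrow L^{\ast}\longrightarrow 0,
\]
in which $H^{\ast}\subset T^{\ast}(X)$ is the rank-$n$ subbundle of covectors annihilating $v$. A pointwise comparison shows $H^{\ast}=\tau^{\ast}(v)$: any $\alpha_x\in H^{\ast}_x$ extends, along the $\hat v$-trajectory through $x$, to a $\hat v$-invariant horizontal $1$-form on a neighborhood of $\gamma_x$ by Lie-transport under $\hat v$, so the two subbundles have equal fibers at each point. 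Triviality of $L^{\ast}$ then splits the sequence, yielding $T^{\ast}(X)\cong \tau^{\ast}(v)\oplus\underline{\R}$. Dualizing and substituting $\tau^{\ast}(v)\cong \Gamma^{\ast}(\tau^{\ast}(\mathcal T(v)))$ delivers $\tau(X)\cong \Gamma^{\ast}(\tau(\mathcal T(v)))\oplus\underline{\R}$. By naturality and the stability of characteristic classes under direct sum with trivial bundles, one then gets, for every stable class $c$,
\[
c(\tau(X))\;=\;c\bigl(\Gamma^{\ast}(\tau(\mathcal T(v)))\bigr)\;=\;\Gamma^{\ast}\bigl(c(\tau(\mathcal T(v)))\bigr),
\]
which is the conclusion of the corollary.

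The one point that requires attention is the identification $H^{\ast}=\tau^{\ast}(v)$ at \emph{every} point of $X$, in particular on the tangency strata $\d_jX$ for $j\geq 2$. The role of the extension germ $(\hat X,\hat v)$ is indispensable here: every $v$-trajectory in $X$ lies inside a non-singular $\hat v$-trajectory in $\hat X$ along which Lie-transport of any horizontal covector is unobstructed, which is what supplies the local $\hat v$-invariant frames for $H^{\ast}$ near points of $\d_jX$. Once this local existence is in hand the rest of the argument is formal and, as a byproduct, uses only that $v$ is non-vanishing, so Corollary \ref{cor2.1} actually holds for the broader class of traversing fields.
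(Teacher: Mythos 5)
Your proposal follows essentially the same route as the paper: the paper's proof simply notes the stable isomorphism $T(X)\approx\Gamma^{\ast}(\tau(\mathcal T(v)))\oplus\underline{\R}$ and then cites the $\Gamma$-induced cohomology isomorphism; you supply the details of why that splitting exists (trivial line subbundle $\langle v\rangle$, identification of its annihilator with $\tau^{\ast}(v)=\Gamma^{\ast}(\tau^{\ast}(\mathcal T(v)))$ via Lie transport along $\hat v$-trajectories), which the paper treats as evident from the discussion preceding Lemma~\ref{lem2.1}. Your closing observation that the argument needs only a traversing $v$ is consistent with the scope of Lemma~\ref{lem2.1} and Theorem~5.1 of \cite{K3}.
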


\begin{proof} Note that  $T(X) \approx \Gamma^\ast(\tau(\mathcal T(v))) \oplus \underline{\R}$. Therefore,  the cohomological isomorphism induced by $\Gamma$ (see Theorem 5.1, \cite{K3}) helps to identify the stable characteristic classes of  $\tau(\mathcal T(v))$ and $T(X)$.
\end{proof}

For a traversally generic $v$, the space  $\mathcal T(v)$ comes equipped with two distinct \emph{intrinsically-defined orientations} of its pure strata $\{\mathcal T(v, \omega)\}_\omega$. These orientations depend only on $v$ and the preferred orientation of $X$.

\begin{theorem}\label{th2.1} Let $X$ be a smooth oriented compact $(n+1)$-manifold, and $v$ a traversally generic vector field. Then 
\begin{itemize}
\item each component of any pure stratum $\mathcal T(v,\omega)$, where $\omega \in \mathbf\Omega^\bullet_{'\langle n]}$ and $|\omega|' > 0$,  acquires two distinct orientations, called  {\sf preferred} and {\sf versal}. Switching the orientation of $X$ affects both orientations of $\mathcal T(v,\omega)$  by the same factor $(-1)^{|sup(\omega)|}$. \smallskip

\item With the help of these two orientations, each  component of  $\mathcal T(v,\omega)$ acquires one of the  two polarities `` $\oplus$" and `` $\ominus$". They do not depend on the orientation of $X$.
\smallskip

\item Each manifold $X(v, \omega)$ comes equipped with a $v$-induced normal framing in $X$. Similarly, the normal $|\omega|'$-dimensional bundle $$\nu(\mathcal T(v, \omega)) =_{\mathsf{def}} \tau(\mathcal T(v))|_{\mathcal T(v, \omega)}/ \tau(\mathcal T(v, \omega))$$ acquires a $v$-induced preferred  framing. 
\end{itemize}
\end{theorem}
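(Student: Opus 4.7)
The plan is to work in the flow-adjusted local coordinates guaranteed by Lemma 3.4 of \cite{K2}. Near any trajectory $\g$ of combinatorial type $\omega$ with support of size $k := |sup(\omega)|$, those coordinates take the form $(u, y, x) \in \R \times \R^{|\omega|'} \times \R^{n-|\omega|'}$, with $\hat v = \d/\d u$, so that the boundary function $z$ restricts on each $(y,x)$-constant trajectory to a monic polynomial in $u$ whose divisor degenerates to $\omega$ at $y = 0$. Crucially, the $y$-coordinates are naturally indexed by pairs $(j, l)$, where $j = 1, \dots, k$ enumerates the roots in the order imposed by the $v$-flow, and $l = 1, \dots, \omega_j -1$ enumerates the transverse deformation directions at the $j$-th root. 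In these coordinates $X(v, \omega)$ is the $(n-|\omega|'+1)$-submanifold $\{y = 0\}$, and the projection $\Gamma$ is simply the map that forgets $u$.

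I would then define the \emph{versal} orientation of $\mathcal T(v, \omega)$ as follows. The $y$-coordinates supply a trivialization of the normal bundle $\nu(\mathcal T(v, \omega))$; order the basis $\{\d/\d y_l^{(j)}\}$ lexicographically in $(j, l)$. Combined with the $v$-induced orientation of the fiber of $\Gamma: X(v, \omega) \to \mathcal T(v, \omega)$, this orients the ambient coordinate patch; requiring that it match the ambient orientation of $X$ determines an orientation on the $x$-factor, which is declared to be the versal orientation. To check this is chart-independent, I would appeal to the uniqueness clause in Lemma 3.4 of \cite{K2}: two traversally generic charts adapted to the same $\omega$ and $v$ differ by a change of variables that respects the root ordering and the lexicographic indexing, so the orientation is intrinsic.

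The \emph{preferred} orientation is defined from the ambient orientation of $X$ via the transverse section $S_\omega \subset X(v, \omega)$ produced in the proof of Lemma \ref{lem2.1}: orient $S_\omega$ by the rule $\omega_X = du \wedge \omega_{S_\omega}$, and transport this orientation to $\mathcal T(v, \omega)$ via the isomorphism induced by $\Gamma|_{S_\omega}$. Reversing the orientation of $X$ manifestly reverses $\omega_{S_\omega}$, but it also reverses the $u$-direction used to enumerate the $k$ roots; each of the $k$ transverse blocks $\{y_l^{(j)}\}_{l}$ gets reordered by the root-reversal, and a careful bookkeeping of these two effects (ambient reversal vs.~root re-enumeration) shows that both the preferred and the versal orientation pick up exactly the factor $(-1)^k = (-1)^{|sup(\omega)|}$. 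The polarity $\oplus / \ominus$ is then the sign by which the two orientations differ, and this sign is invariant under reorientation of $X$.

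For the third bullet, the normal framing of $X(v, \omega) \subset X$ is the ordered basis $\{\d/\d y_l^{(j)}\}$ coming from the local model, and the $v$-induced preferred framing of $\nu(\mathcal T(v, \omega))$ is its image under $\Gamma_\ast$; both are well-defined globally by the same chart-compatibility argument used for the versal orientation. The main obstacle I anticipate is precisely this gluing step: one must verify that transitions between two flow-adjusted charts covering the same piece of $X(v, \omega)$ preserve not only the lexicographic indexing of $y$-coordinates but also the induced basis up to an orientation-preserving and framing-compatible change. This amounts to showing that the group of local automorphisms of the traversally generic normal form acts by block-upper-triangular matrices with positive diagonal entries on the $y$-coordinates, a fact that should follow from the explicit polynomial model (\ref{eq2.4}) and the ordering of roots along $v$.
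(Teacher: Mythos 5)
Your construction of the \emph{versal} orientation is essentially the one in the paper: lexicographic ordering of the normal-deformation coordinates from the polynomial model $P(u,x)$, combined with the ambient orientation transported to a transversal section, and the chart-independence is handled (as you suspect) by an explicit computation with the differential forms coming from the polynomial factorization. However, your proposal diverges from the actual argument in three places, and two of them are genuine gaps.

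First, your \emph{preferred} orientation is not the paper's preferred orientation, and as written it is dimensionally inconsistent. You set $\omega_X = du \wedge \omega_{S_\omega}$, but $S_\omega$, being a section of $\Gamma : X(v,\omega) \to \mathcal T(v,\omega)$, has dimension $n - |\omega|'$, so $du \wedge \omega_{S_\omega}$ has degree $n - |\omega|' + 1 \neq n+1$ whenever $|\omega|' > 0$; you would need extra data to fill the gap, and any natural such data (the lexicographically ordered $y$-frame) would just reproduce the versal orientation. The paper's preferred orientation is a genuinely different object: it is built inductively from the stratification $\{\d_j^+X(v)\}_j$, where the orientation of $X$ gives (via inward normals) an orientation of $\d_1X$, which gives an orientation of $\d_2X = \d(\d_1^+X)$ inside $\d_1^+X$, and so on; one then takes the preferred orientations of the spaces $\mathsf T_i$ tangent to $\d_{j_i}X^\circ$ at the tangency points $a_i \in \g \cap \d_1X$, propagates them along $\g$ by the flow, and combines them in the order of increasing $i$. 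This is what makes the preferred orientation and the versal orientation independent and meaningful to compare, giving the polarities.

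Second, your mechanism for the sign $(-1)^{|\sup(\omega)|}$ is wrong: you claim that reversing the orientation of $X$ ``reverses the $u$-direction used to enumerate the $k$ roots.'' It does not. The $u$-direction is determined by $v$, not by the orientation of $X$; reversing the orientation of $X$ leaves $v$ and hence the ordering of $\g \cap \d_1X$ unchanged. (The paper treats the flip $v \Rightarrow -v$ as a separate matter after the proof, where it leads to formula (\ref{eq2.7}).) The correct reason for the factor $(-1)^{|\sup(\omega)|}$ is that flipping the orientation of $X$ flips the preferred orientation of every stratum $\d_j X$, and the preferred orientation of $\tau_\g(\mathcal T(v,\omega))$ is assembled from one such orientation for each of the $|\sup(\omega)|$ tangency points; hence it flips exactly when $|\sup(\omega)|$ is odd, and the versal orientation flips in parallel because it also involves one block per tangency point.

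Third, for the framing of $\nu(\mathcal T(v,\omega))$ you propose to use the coordinate basis $\{\d/\d y_l^{(j)}\}$ and its image under $\Gamma_\ast$, deferring the well-definedness to an unproved gluing lemma about automorphisms of the normal form acting block-upper-triangularly. The paper sidesteps this by giving an \emph{intrinsic} construction: at each tangency point $a_i$ of multiplicity $j_i$, take the $1$-forms $dz, \mathcal L_v(dz), \dots, \mathcal L_v^{j_i - 2}(dz)$ (evaluated at $a_i$), spread them along $\g$ by the flow-generated diffeomorphisms $\phi_{a_i, x}$, and observe that they are flow-invariant horizontal $1$-forms that descend to independent sections of $\nu^\ast(\mathcal T(v,\omega))$. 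Because this construction uses only $z$ and $v$ (not a particular choice of special coordinates), no compatibility lemma is needed. Your gluing argument might well work, but it would be a substantial amount of additional verification that the paper avoids.
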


\begin{proof} We extend the field $v$ on $X$ to a non-vanishing field $\hat v$ on $\hat X \supset X$. 
Local transversal sections $S$ of the $\hat v$-flow have a well-defined orientation due to the global orientation of $X$ and the preferred orientation of the $v$-trajectories. \smallskip

For a traversally generic $v$ on a $(n+1)$-dimensional $X$,  each $v$-trajectory $\g$ of the combinatorial type $\omega$  has  a flow adjusted neighborhood $U \subset \hat X$, equipped with a special coordinate system $(u, x, y): U \to \R\times \R^{|\omega|'} \times \R^{n-|\omega|'}.$ By Lemma 3.4  and formula $(3.17)$ from \cite{K2},  the boundary $\d X$ is given in these coordinates by the polynomial equation  $\{P(u, x) =0\}$
 in $u$ of an even degree $|\omega|$ (see (\ref{eqVERSAL})). Here $x =_{\mathsf{def}} \{ x_{i, \ell}\}_{I,\ell}$,  and the numbers $\{\a_i\}_i$ are the distinct real roots of the polynomial $P(u, 0)$,  ordered so that $\a_i < \a_{i+1}$ for all $i$.  
At the same time, $X$ is given by the polynomial inequality $\{P(u, x) \leq 0\}$.  Each $v$-trajectory in $U$ is produced by freezing all the coordinates $x, y$, while letting $u$ to be free.

We order  the coordinates $\{x_{i,\ell}\}_{i, \ell}$ lexicographically: first we order them by the increasing $i$'s; then, for a fixed $i$, the ordering among $\{x_{i, \ell}\}_\ell$ is defined by the increasing powers $\ell$ of the binomial $(u - \a_i)$ in the formula (\ref{eqVERSAL}). This ordering of $\{x_{i, \ell}\}_{i, \ell}$, together with the orientation in the flow section $S$ (induced with the help of $v$ by the orientation of $X$) gives rise to an orientation of the $y$-coordinates. They correspond to the space, tangent to the pure stratum $\mathcal T(v, \omega)$ at $\g$. 
\smallskip

We still have to check that this ordering of $\{x_{i, \ell}\}_\ell$ is determined by the \emph{geometry of tangency} and does not depend on a particular choice of the special coordinates $\{x_{i, \ell}\}_\ell$. 

Consider a $v$-trajectory $\g$. Let $\g \cap \d_1X = \coprod_i a_i$, a finite set of points. In the vicinity of $a_i \in \d_{j_i}X^\circ$, we write down the auxiliary function $z$ from (\ref{eq2.3})  in two ways:  $$\text{as}\;\; u^j + \sum_{l = 0}^{j - 2} \phi_\ell(x) u^\ell,\;\; \text{and as} \;\;  \big(u^j + \sum_{l = 0}^{j - 2} x_\ell u^\ell\big)\, Q(u,x).$$ 
Here,   $j =_{\mathsf{def}} j_i = \omega_i$, $x =_{\mathsf{def}}\{x_\ell = \{x_{i, \ell}\}_i\}_\ell$, $\phi_\ell(0) = 0$, and $q =_{\mathsf{def}} Q(0,0) \neq 0$. 
\smallskip

Consider the smooth map  $\Phi: \R^{j-1} \to  \R^{j-1}$, given by the functions $\phi_0, \dots, \phi_{j-2}$.\smallskip

We aim to show that, at the origin $(u, x) = (0, 0)$, the following two exterior $(j - 1)$-forms are equal:
\begin{eqnarray}\label{eq2.5}
d \phi_0 \wedge d \phi_1 \wedge \dots  \wedge d \phi_{ j- 2}\;|_{(0, 0)} = d x_0 \wedge d x_1 \wedge \dots \wedge d x_{j -2}\;|_{(0, 0)}.
\end{eqnarray}
Hence the Jacobian $\det(D\Phi) > 0$ ---the two orientations, induced by two coordinate systems $\{\phi_\ell\}_\ell$ and $\{x_\ell\}_\ell$ in the vicinity of $a_i$, do agree. \smallskip

The argument validating (\ref{eq2.5}) is similar to the one we have used in \cite{K2}, Lemma 3.3. 

First note that $q =_{\mathsf{def}} Q(0, 0) \neq 0$ must be 1: just plug $x = 0$ in the identity  
\begin{eqnarray}\label{eq2.6}
u^j + \sum_{\ell = 0}^{j - 2} \phi_\ell(x) u^\ell = \big(u^j + \sum_{\ell = 0}^{j - 2} x_\ell u^\ell\big) Q(u,x).
\end{eqnarray}

Let $a(u)$ be the row-vector $(u^{j -2}, \dots , u, 1)$ and $d \phi$ be the column-vector $(d\phi_{j-2}, \dots , d\phi_1, d\phi_0)$ of 1-forms. Then the differential of the identity (2.6), 
modulo the ideal $\langle u^{j -1}, x \rangle$,  generated by the functions $u^{j -1}$ and $x_0, \dots , x_{j-2}$,  can be written as $$a \ast d\phi = Q a \ast d x\;\; \mod\, \langle u^{j -1}, x \rangle, $$
where ``$\ast$" stands for the matrix multiplication. 

We apply partial derivatives $\frac{\d}{\d u}, \dots , \frac{\d^{j-2}}{\d u^{j-2}}$ to the identity above to get a new system of identities: 
$$\frac{\d^k}{\d u^k}(a) \ast d\phi = \frac{\d^k}{\d u^k} (Qa)\ast d x \;\; \mod\, \langle u^{j -1 -k}, x \rangle,$$
where $k = 0, 1, \dots , j - 2$. Now put $u = 0$ and use that $q =1$ to get the following triangular system of identities, modulo the ideal $\langle  x \rangle$ generated by $\{x_\ell\}_\ell$: 
\[
\begin{array}{l}
d\phi_0 =  d x_0 \;\; \mod\; \langle  x \rangle \\
d\phi_1 = d x_1 + b_{1,0}\, d x_0 \;\; \mod\; \langle  x \rangle \\
d\phi_2 = d x_2 + b_{2,0}\, d x_0 +  b_{2,1}\, d x_1\;\; \mod\; \langle  x \rangle \\
\dots \\
d\phi_{j-2} = d x_{j-2} + b_{j-2, 0}\, d x_0 +  b_{j-2, 1}\, d x_1 + \dots + b_{j-2, j-3}\, d x_{j-3} \;\; \mod\; \langle  x \rangle
\end{array}
\] 
Here $b_{s, t}$ denote some functional coefficients whose computation we leave to the reader.
Now (\ref{eq2.5}) follows by taking exterior products of the 1-forms on the RHS and LHS of the system above  and letting $x = 0$. \smallskip

Let $\theta_i =_{\mathsf{def}}  dx_{i,0} \wedge \dots \wedge dx_{i,j_i - 2}$ and let $\theta =_{\mathsf{def}} \wedge_i \, \theta_i$. Then $d u \wedge \theta$, together with the volume form in $X$, define the volume form in the $y$-coordinates. Therefore the orientation of the space $\tau_\g(\mathcal T(v, \omega))$, tangent to the pure stratum $\mathcal T(v, \omega)$ at its typical point $\g$ (this space can be  identified with the space spanned by the vectors $\d_{y_1}, \dots , \d_{y_{n - |\omega|'}}$), is determined intrinsically by the local geometry of the $v$-flow in the vicinity of $\g \subset \hat X$. Let us call this orientation of $\tau_\g(\mathcal T(v, \omega))$ {\sf versal}. \smallskip

On the other hand, each manifold $\d_j X$, $j > 0$,  comes equipped with its own {\sf preferred} orientation, which  depends only on the stratification $\{\d_k^+X(v)\}_k$  and on the preferred orientation of $X$. Here is the recipe for its construction:  the orientation of $X$, with the help of the inward normals, induces a preferred orientation of $\d X$, and thus of $\d_1^\pm X$. In turn, the inward normals to $\d_2X = \d(\d_1^+X)$ in $\d_1^+X$ produce a preferred orientation of $\d_2X$, and thus of $\d_2^\pm X$. And the process goes on: the preferred orientation of $\d_{j - 1}X$, with the help of the inward normal  to $\d_jX$  in  $\d_{j - 1}^+X$, determines a preferred orientation of $\d_jX$, and hence of $\d_j^\pm X$.

So, along each trajectory $\g$, every space $\mathsf T_i$, tangent to $\d_{j_i}X^\circ$ and transversal to $\g$ at the point $a_i \in \g \cap \d_1X$, is preferably oriented. For a traversally  generic $v$, the $\hat v$-flow propagates these spaces $\mathsf T_i$'s along $\g$ in such a way that they form complementary vector bundles over $\g$. We order them by the increasing values of $i$. This ordering, together with the preferred orientations of the $\mathsf T_i$'s (based on the orientations of $\d_{j_i}^+X$), generates a new  preferred orientation of the tangent space $\tau_\g(\mathcal T(v, \omega))$. This preferred orientation may agree or disagree with the versal orientation of the same space, produced with the help of special coordinates in the vicinity of $\g$; recall that the versal orientation is based on the increasing powers of $(z - \a_i)$'s, a feature of the special coordinates. In the first case, we attach the polarity ``$\oplus$" to $\g$, in the second case, the polarity of $\g$ is defined to be ``$\ominus$". 

Therefore not only the components of pure strata $\mathcal T(v, \omega)$ are canonically oriented open manifolds, but they also come in two flavors: ``$\oplus$" and ``$\ominus$"!
\smallskip

We will exhibit an ordered collection of $|\omega|'$ linearly independent and globally defined $1$-forms (as in \cite{K2}, formula (3.30)) that produces a framing of the quotient bundle $$\nu^\ast(\mathcal T(v, \omega)) := \tau^\ast(\mathcal T(v))|_{\mathcal T(v, \omega)}/ \tau^\ast(\mathcal T(v, \omega)),$$ the ``normal cotangent bundle"  of  $\mathcal T(v, \omega)$ in $\mathcal T(v)$.  Let us outline their construction.  

For any $\g \in \mathcal T(v, \omega)$ and any two points $a, x \in \g$, denote by $\phi_{a, x}$ the germ (taken in the vicinity of $\g \subset \hat X)$ of the unique $v$-flow-generated  diffeomorphism  that maps $x$ to $a$. 

Fix an auxiliary function $z: \hat X \to \R$ as in (\ref{eq2.3}). For each point $a_i \in \g \cap \d_1X$ of multiplicity $j_i > 1$, let us consider the $1$-forms $\{dz,\, \mathcal L_v(dz),\, \mathcal L^2_v(dz),\, \dots ,\,  \mathcal L^{j_i - 2}_v(dz)\},$ taken at the point $a_i$ (that is, view them as elements of $T^\ast_{a_i}(X)$). Then, with the help of one-parameter family of diffeomorphisms $\{\phi_{a_i, x}\}_{x \in \g}$, we spread the forms $$\{dz|_{a_i}, \mathcal L_v(dz)|_{a_i}, \mathcal L^2_v(dz)|_{a_i}, \dots ,  \mathcal L^{j_i - 2}_v(dz)|_{a_i}\}$$ along $\g$ to get $j_i -1$ independent sections $\eta_{i,0}, \eta_{i, 1}, \dots \eta_{j_i - 2}$ of $T^\ast(X)|_\g$. By their very construction, these sections are flow-invariant. Moreover, since at points of $\d_2X$ the field $v$ is tangent to $\d X = \{z = 0\}$, we get $dz(v)|_{\d_2X} = \mathcal L_v(z) = 0$. Thus $\eta_{i,0}(v)|_\g = 0$ for all $i$. 

Similarly, for each $a_i \in \d_3X$ (i.e., $j_i > 2$), the field $v$ is tangent to the manifold $\d_2X = \{z = 0,\, \mathcal L_v(z) = 0\}.$ Therefore, using the identity 
$$\mathcal L_v(dz) = v \,\rfloor\, d(dz) + d(v \,\rfloor\, dz) = d(v \,\rfloor\, dz),$$
we get $\mathcal L_v(dz)(v)|_{\d_3X} = 0$. As a result, $\eta_{i,1}(v)|_\g = 0$ for all $i$ with $j_i > 2$. Similar considerations show that for each $i$, all the sections $\{\eta_{i,k}\}_{k < j_i -1}$, have the property $\eta_{i,k}(v)|_\g = 0$---they are \emph{horizontal} $1$-forms. Therefore they can be viewed as independent sections of the subbundle $\tau^\ast(v) \subset T^\ast(X)$. With the help of $(\Gamma^\ast)^{-1}$, these sections produce independent sections of the quotient bundle $\nu^\ast(\mathcal T(v, \omega))$.

Now we take all $|\omega|'$ sections $\{\eta_{i,0}, \eta_{i, 1}, \dots \eta_{j_i - 2}\}_i$ of $T^\ast(X)|_\g$, ordered in groups  by the increasing values of $i$. For a traversally  generic $v$, by Theorem 3.3 from \cite{K2},  these sections of $\tau^\ast(v) \subset T^\ast(X)|_\g$ are linearly independent.  

As long as the combinatorial type $\omega$ of $\g$ is fixed, these sections depend smoothly on $\g$. Since their construction relies only on $\omega$, $z$, and $v$, they are globally well-defined independent sections of the conormal bundle $\nu^\ast(\mathcal T(v, \omega))$, an intrinsically defined trivialization of this  bundle. Their duals define independent sections of the normal bundle $\nu(\mathcal T(v, \omega))$.
\smallskip

The preferred orientation of each $\d_jX$, $j \geq 1$, depends only on $v|_{\d_1X}$ and the orientation of $X$. In particular, the preferred orientation of $\d_1 X$ depends on the orientation of $X$ only. As we flip the orientation of $X$, the preferred orientation of each $\d_jX$ flips as well. Therefore the preferred orientation of the tangent bundle $\tau(\mathcal T(v, \omega))$ changes, as a result of flipping the orientation of $X$, only when the cardinality of the intersection $\g \cap \d X$---the interger $|\sup(\omega)|$---is odd. 

The versal orientation of $\mathcal T(v, \om)$ behaves similarly under the change of an orientation of $X$. As a result, the polarity ``$\,\oplus$" or ``$\,\ominus$" of each component of $\mathcal T(v, \om)$ is independent of the orientation of $X$. 
\end{proof}

\begin{corollary}\label{cor2.2} For a traversally generic vector field $v$, the points of $0$-dimensional strata $\{\mathcal T(v, \om)\}_\om$ come equipped with two sets of polarities: ``$\,+, -$" and ``$\,\oplus, \ominus$".
\end{corollary}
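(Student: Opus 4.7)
My plan for the proof is to recognize this corollary as a direct specialization of Theorem~\ref{th2.1} to the case where the pure stratum has dimension zero. A minimal (i.e., zero-dimensional) stratum $\mathcal T(v,\omega)$ corresponds to combinatorial types $\omega \in \Omega^\bullet_{'\langle n]}$ with $n - |\omega|' = 0$, i.e., $|\omega|' = n$. This is strictly positive for $n \geq 1$, so the hypothesis $|\omega|' > 0$ of Theorem~\ref{th2.1} is automatically satisfied, and I can invoke the theorem directly: each connected component of $\mathcal T(v,\omega)$---which in the zero-dimensional case is a single point $\g$---acquires both a \emph{preferred} and a \emph{versal} orientation, together with the polarity $\oplus$ or $\ominus$ produced by comparing them.

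The key observation making the corollary immediate is that an orientation of a connected zero-dimensional manifold is, by definition, just a choice of sign in $\{+,-\}$. Hence the preferred orientation supplied by Theorem~\ref{th2.1}, when interpreted at a point $\g$ of the minimal stratum, is literally a label in $\{+,-\}$. This provides the first of the two polarities asserted in the statement. The $\oplus/\ominus$ polarity is inherited verbatim from the second bullet of Theorem~\ref{th2.1}, being the comparison between the preferred and the versal orientation signs at $\g$.

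There is essentially no obstacle: the whole construction---the iterated normal orientation of the flag $\{\d_j^+X\}_j$ giving the preferred orientation, and the lexicographic ordering of the special coordinates $\{x_{i,l}\}_{i,l}$ giving the versal one---has already been carried out inside the proof of Theorem~\ref{th2.1}, and I just need to restrict those outputs to the zero-dimensional case. The one subtlety worth highlighting is that the two polarities carry logically independent information: by the last paragraph of the proof of Theorem~\ref{th2.1}, reversing the orientation of $X$ multiplies the preferred orientation of $\g$ by $(-1)^{|\sup(\omega)|}$, so the $+/-$ label of a point may or may not flip depending on the parity of $|\sup(\omega)|$, whereas $\oplus/\ominus$ is in all cases invariant under this operation. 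Thus, each point of a minimal stratum is naturally tagged with both a polarity from $\{+,-\}$ and a polarity from $\{\oplus,\ominus\}$, as claimed.
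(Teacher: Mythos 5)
Your treatment of the $\oplus/\ominus$ polarity matches the paper exactly: you correctly observe that on a zero-dimensional stratum both the preferred and versal orientations of $\tau_\g(\mathcal T(v,\omega))$ are signs, and $\oplus/\ominus$ records whether they agree. This is exactly the first half of the paper's short proof.

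However, your account of the $+/-$ polarity diverges from the paper's. You take the preferred orientation of $\tau_\g(\mathcal T(v,\omega))$ itself (a sign, since $\dim=0$) as the $+/-$ label, and you correctly note that this sign flips by $(-1)^{|\sup(\omega)|}$ under reversal of the orientation of $X$. The paper does something genuinely different: it compares the preferred orientation of the normal bundle $\nu(\g, X)$ with the preferred orientation of $\d_1X$ at the \emph{lowest} point of $\g \cap \d_1X$. Since both of those ingredients flip simultaneously when the orientation of $X$ is reversed, the paper's $+/-$ polarity is invariant under that reversal, whereas your proposed $+/-$ is not (for $|\sup(\omega)|$ odd). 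So the two definitions cannot coincide in general, and your claim that "the whole construction has already been carried out inside the proof of Theorem~\ref{th2.1}" does not hold: the $+/-$ comparison is a genuinely new ingredient, not a restriction of the outputs of Theorem~\ref{th2.1} to dimension zero. Your construction does produce a valid labelling, but it is a weaker, orientation-dependent invariant, and the reader should not mistake it for the one intended in Corollary~\ref{cor2.2}.
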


\begin{proof} When $\omega$ has the maximal possible reduced multiplicity $|\omega|' = n$, we can compare the versal and preferred orientations at each point $\g$ of the zero-dimensional set $\mathcal T(v, \omega)$. When the two agree, we attach the polarity  ``$\oplus$" to $\g$; otherwise, its polarity  is defined to be ``$\ominus$". Of course, the preferred orientation of the normal bundle $\nu(\g, X)$ can be compared with the preferred orientation of $\d X$ at the ``lowest" point in $\g \cap \d X$. This comparison allows for another pair $(+, -)$ of polarities to be attached to $\g$.
\end{proof}
\smallskip

Our next goal is to prove that the trajectory space $\mathcal T(v)$ of a \emph{traversally generic} vector field $v$ is a {\sf Whitney stratified space} (see Definition \ref{def2.3}). Unfortunately, the proof of this claim is rather technical, so some readers may choose to proceed to Section 4. Prior to establishing, in Theorem \ref{th2.2} below, that $\mathcal T(v)$ is a Whitney stratified space, we need to prove a few lemmas.\smallskip

Recall that a function $f$ on a closed subset $Y$ of a smooth manifold $X$ is called {\sf smooth} if it is the restriction of a smooth function, defined in an open neighborhood of $Y$. 

\begin{lemma}\label{lem2.2} Let $v$ be a  traversing vector field on a compact smooth manifold $X$, and $\Gamma: X \to \mathcal T(v)$ the obvious map. Let $F \subset \mathcal T(v)$ be a closed subset and $\psi: F \to \R$ a function such that its pull-back $\Gamma^\ast(\psi)$ is smooth on $\Gamma^{-1}(F) \subset X$ (it satisfies there the property $\mathcal L_v(\Gamma^\ast(\psi)) = 0$). 

Then $\psi: F \to \R$ admits an extension $\Psi:  \mathcal T(v) \to \R$ such that  $\Gamma^\ast(\Psi)$  is a smooth function on $X$ with the property $\mathcal L_v(\Gamma^\ast(\Psi)) = 0$.
\end{lemma}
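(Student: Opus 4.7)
Setting $Y := \Gamma^{-1}(F)$, which is closed and $v$-saturated in $X$, the hypothesis says that $f := \Gamma^\ast(\psi)$ extends to a smooth function $f_0$ on some open neighborhood $W$ of $Y$ in $X$, and that $f$ is constant along every trajectory comprising $Y$. My plan is to build a smooth flow-invariant $\hat f: X \to \R$ with $\hat f|_Y = f$; the required $\Psi: \mathcal T(v) \to \R$ is then determined by $\Gamma^\ast(\Psi) = \hat f$.

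First I would produce local flow-invariant extensions. By the polynomial normal form of Lemma 3.4 in \cite{K2} (together with the ordinary flow-box theorem for interior trajectories), every $\gamma \in \mathcal T(v)$ admits an open neighborhood $U_\gamma \subset \hat X$ with coordinates $(u, w) \in \R \times \R^n$ in which $\hat v = \d_u$; the flow-invariant smooth functions on $U_\gamma$ are precisely the smooth functions of $w$ alone. For each $\gamma$ meeting $Y$, I would restrict $f_0$ to the transversal slice $\{u = 0\} \cap U_\gamma$ and pull it back along $(u, w) \mapsto w$ to a smooth flow-invariant $h_\gamma: U_\gamma \to \R$ agreeing with $f$ on $Y \cap U_\gamma$; for the remaining $\gamma$, set $h_\gamma \equiv 0$.

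Next, using the compactness of $\mathcal T(v)$, extract a finite subcover $\{U_{\gamma_i}\}$ of $X$ and build a smooth flow-invariant partition of unity $\{\rho_i\} \subset C^\infty(\mathcal T(v))$ subordinate to it, so that $\rho_i \geq 0$, $\mathrm{supp}(\rho_i) \subset U_{\gamma_i}$, and $\sum_i \rho_i \equiv 1$ on $X$. These $\rho_i$ are produced from bumps on the transversal slices $\{u = 0\} \cap U_{\gamma_i}$, pulled back to be independent of $u$ and matched across chart overlaps by the $\hat v$-flow. Defining
$$\hat f := \sum_i \rho_i \cdot h_{\gamma_i},$$
one obtains a smooth flow-invariant function on $X$. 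For $x \in Y$, every summand with $\rho_i(x) \neq 0$ forces $x \in U_{\gamma_i}$ and in particular $U_{\gamma_i} \cap Y \neq \emptyset$, so $h_{\gamma_i}(x) = f(x)$; hence the sum collapses to $f(x)\sum_i \rho_i(x) = f(x)$, as required.

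The principal obstacle is the construction of the smooth flow-invariant partition of unity, particularly near trajectories tangent to $\d_1X$ or reduced to singletons: for such $\gamma$, each flow-box chart covers only a finite segment of every nearby trajectory, and a bump on a transversal slice does not straightforwardly extend by zero in a smooth, flow-invariant fashion. Checking that the $\hat v$-saturations of the transversal bumps patch together consistently across the $\{U_{\gamma_i}\}$, and that the resulting functions descend to genuine elements of $C^\infty(\mathcal T(v))$, is the technical heart of the argument; Lemma 3.4 of \cite{K2} and the compatibility of its normal form with the boundary-tangency combinatorics of each $\gamma_i$ are the key tools.
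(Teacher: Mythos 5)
Your approach has a genuine gap, and it is not a small technical one.  Lemma~\ref{lem2.2} is stated for an \emph{arbitrary} gradient-like (equivalently, traversing) field $v$ on a compact $X$; it does \emph{not} assume that $v$ is traversally generic.  But the very first step of your plan, the production of flow-boxes $U_\gamma$ with the polynomial normal form, invokes Lemma~3.4 of \cite{K2}, which is a statement about traversally generic fields.  For a merely traversing field the trajectories can touch $\d_1X$ with no normal form at all (infinite order tangency, non-generic patterns, etc.), and there is no reason that a $v$-adjusted cover by charts admitting a ``$u$-direction with transversal slice'' exists.  The appeal to ``the ordinary flow-box theorem for interior trajectories'' does not save the situation, since for a traversing field every trajectory meets $\d_1X$, so every tube eventually encounters a boundary tangency.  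Thus the local ingredients you need are simply not available in the generality of the lemma.

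Even granting traversal genericity, the step you yourself flag as ``the technical heart'' --- producing a smooth, flow-invariant partition of unity subordinate to the $\{U_{\gamma_i}\}$ --- is where the real work is, and you leave it unresolved.  A bump on a transversal slice, extended by the flow, has support equal to the $v$-saturation of the bump's support; one must verify that this saturation is a compact subset of the chart and that the extension is smooth up to $\d_1X$ near tangencies and singleton trajectories.  Moreover in the polynomial model the slice $\{u=0\}$ need not meet every trajectory in the chart (nearby trajectories of a given $Z_\omega$ can have disjoint, nonoverlapping $u$-supports), so your $h_\gamma$ need not be defined, let alone agree with $f$, on all of $Y\cap U_\gamma$.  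In the special case of traversally generic fields the paper does construct exactly such flow-invariant partitions of unity in the proof of Theorem~\ref{th4.2}, so the obstacle is surmountable there; but that is a later result resting on stronger hypotheses, and it cannot be imported wholesale into Lemma~\ref{lem2.2}.

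The paper's actual argument sidesteps both problems.  It fixes a single global function $h\colon X\to\R$ with $dh(v)>0$ (this exists for any traversing field), chooses finitely many closed transversal sections $S_\alpha$ at distinct heights $c_\alpha$ so that every trajectory meets some section, and then sweeps downward by induction on the heights.  At each stage the partially built flow-invariant extension above height $c_\alpha + \e/2$, together with the given $\psi$, determines a smooth function on a closed subset $F_\alpha\cup Q_\alpha$ of the section $S_\alpha$; this is extended to all of $S_\alpha$ by the Whitney extension theorem, then spread along trajectories through $S_\alpha$.  No local normal form, and no flow-invariant partition of unity, is ever needed.  If you want to pursue a partition-of-unity strategy, you would first have to establish a substitute for Lemma~3.4 of \cite{K2} that works for all traversing fields, which is not known and is likely false in the form you would need.
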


\begin{proof}  Let $h: X \to \R$ be a smooth function with the property $dh(v) > 0$.  By Corollary 4.1 from \cite{K1}, such a Lyapunov function $h$ exists for any traversing $v$. Using  $h$, we can find a finite set $\mathcal S$ of closed smooth transversal sections $\{S_\a \subset h^{-1}(c_\a)\}_\a$ of the $v$-flow, such that each trajectory hits some section from the collection $\mathcal S$. Moreover, we can assume that all the heights $\{c_\a\}$ are distinct and separated by some $\e > 0$. The set $\mathcal S$ can be given a poset structure: $\b \succ \a$ if there exists an ascending $v$-trajectory $\g$ that first pierces $S_\a$ and then $S_\b$. Evidently, this implies that $c_\a < c_\b$. 
\smallskip

For a given $\a$, consider the set $\mathcal S_{\succ\a} =_{\mathsf{def}} \{\b \succ \a\}$ and put $c^{\Uparrow}_\a =_{\mathsf{def}} \min_{\b \succ \a} \{c_\b\}$. 
\smallskip

Now the proof is an induction by the heights $\{c_\a\}$, guided by the partial order in $\mathcal S$. It is illustrated in Fig. 1.  Assume that the desired extension $$\tilde\Psi_{\succ \a}:\, h^{-1}([c^{\Uparrow}_\a,\, +\infty)) \to \R$$ of the function $\Gamma|_{\Gamma^{-1}(F)} \circ \psi$, subject to the property $\mathcal L_v(\tilde\Psi_{\succ \a}) = 0$, already has been constructed. 
The inductive step calls  for an extension of $\tilde\Psi_{\succ \a}$ to a function on  $h^{-1}([c_\a,\, +\infty))$,  while keeping it constant on the $v$-trajectories. \smallskip

Denote by $X(v, A)$ the union of $v$-trajectories through a closed subset $A \subset X$. \smallskip

Consider two sets: $F_\a =_{\mathsf{def}} \Gamma^{-1}(F) \cap S_\a$ and $Q_\a =_{\mathsf{def}} X(v,\, \coprod_{\b \succ \a} S_\b) \cap S_\a$. \smallskip

Since $\tilde\Psi_{\succ \a}$ is constant along each trajectory and $S_\a$ is smooth and transversal to the flow, $\tilde\Psi_{\succ \a}$ produces  a well-defined smooth function $\hat\Psi_{\succ \a}: Q_\a \to \R$. On the other hand, the function $\tilde\psi =_{\mathsf{def}} \psi \circ \Gamma: \Gamma^{-1}(F) \to \R$ is smooth and constant along trajectories by the lemma  hypothesis. In particular, it is a smooth function on the closed set $F_\a$. Moreover, since $\tilde\Psi_{\succ \a}$ is an extension of  $\tilde\psi$ to $h^{-1}([c^{\Uparrow}_\a,\, +\infty)) \subset X$, both functions, $\hat\Psi_{\succ \a}$ and $\tilde\psi$, agree on $F_\a \cap Q_\a$. Therefore we have produced a function $\Psi_\a^\sharp: F_\a \cup Q_\a \to \R$ which extends to a  smooth function $\tilde\Psi_\a$ on $S_\a$. In turn, $\tilde\Psi_\a: S_\a \to \R$ defines a smooth function $\hat\Psi_\a: X(v, S_\a) \to \R$ which is constant on each trajectory through $S_\a$. By their construction, $\hat\Psi_\a$ and $\tilde\Psi_{\succ \a}$ agree on the set $X(v, S_\a) \cap h^{-1}([c^{\Uparrow}_\a,\, +\infty)).$ Together, they produce a smooth function on $h^{-1}([c_\a, +\infty))$ which is constant along the trajectories through $\coprod_{\b \succeq \a} S_\b$ and extends $\tilde\psi$.
This completes the induction step.
\end{proof}

\begin{figure}[ht]
\centerline{\includegraphics[height=4in,width=4.3in]{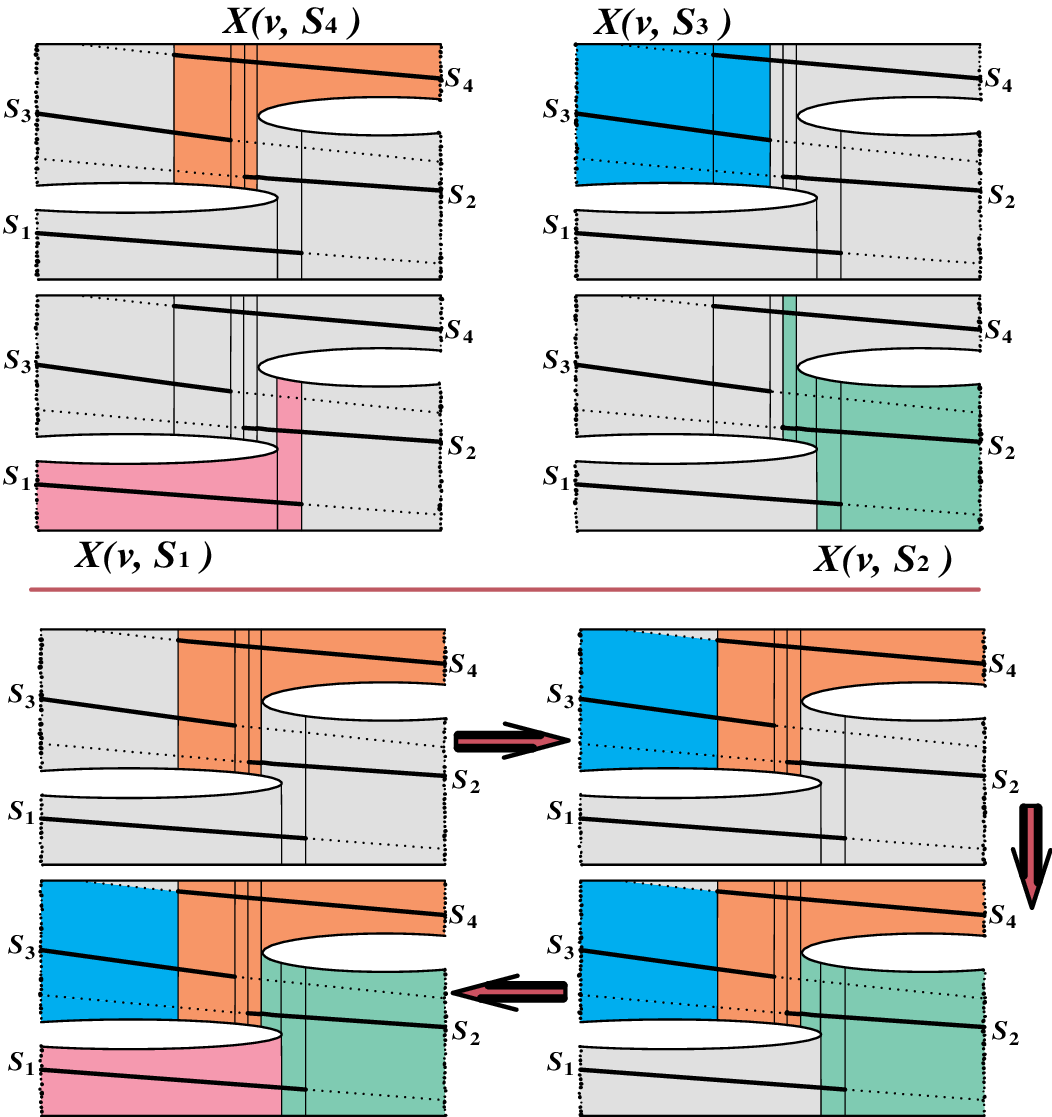}}
\bigskip
\caption{\small{The upper four diagrams show the flow sections $S_i$ and the sets $X(v, S_i)$ for $i = 1, 2, 3, 4$. The lower four diagrams show the growth of the domains of $\psi$-extensions, as they appear in the proof (to simplify the picture, the original set $\Gamma^{-1}(F)$ is not shown).}} 
\end{figure}

\begin{definition}\label{def2.3} {\bf (Whitney \cite{W})} Let $Z$ be a closed subset of a smooth manifold $M$. Consider its partition $Z = \coprod_{\a \in \mathcal S} Z_\a$, where  $\mathcal S$ a finite poset.   \smallskip

We say that $Z$ is a {\sf Whitney space} if the following properties hold:
\begin{enumerate}
\item each stratum $Z_\a$ locally is a  smooth submanifold of $M$,
\item take any pair $Z_\a \subset \bar Z_\b$ and any two of sequences $\{x_i \in Z_\b\}_i$,  $\{y_i \in Z_\a\}_i$, both converging to the same point $y \in Z_\a$.  In a local coordinate system on $M$, centered on $y$, form the secant lines $\{l_i =_{\mathsf{def}} [x_i, y_i]\}_i$ so that that $\{l_i\}_i$ converge to a limiting line $l\subset T_yM$. Also consider a sequence of tangent spaces $\{T_{x_i}(Z_\b)\}_i$ that converge to a limiting space $\tau \subset T_yM$. \smallskip

Then we require that $l \subset \tau$. \hfill $\diamondsuit$
\end{enumerate}
 \end{definition}
 
If $Z \subset M$ is a Whitney space, then one can prove that $T_y(Z_\a) \subset \tau$ (see \cite{GM2}). 
\smallskip

Now we are going to verify that the standard models of traversally  generic flows lead to spaces of trajectories which are Whitney spaces.

\begin{lemma}\label{lem2.3} Let $\omega \in \mathbf\Omega^\bullet_{' \langle n]}$. Consider the semi-algebraic set $Z_\omega := \{P_\omega(u, x) \leq 0,\, \break  \|x\| \leq \e\},$ where the polynomial $P_\omega$ of an even degree $|\omega|$ is as in (\ref{eqVERSAL}) (its real divisor has the combinatorial type $\omega$), and $\e > 0$ is sufficiently small.  Let $\mathcal T_\omega$ denote the $(\omega_{\preceq})$-stratified trajectory space of the constant vector field $v =_{\mathsf{def}} \d_u$ in $Z_\omega$. \smallskip

Then there exists an embedding $K_\omega: \mathcal T_\omega \to \R^{2|\omega|'}$, given by some smooth functions on $Z_\omega$ which are constant along each $\d_u$-trajectory that resides in $Z_\omega$.
\end{lemma}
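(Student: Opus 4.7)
Observe that $\mathcal T_\omega$ is compact Hausdorff, being the quotient of the compact semi-algebraic set $Z_\omega$ by the closed equivalence relation that identifies points on a common $\d_u$-trajectory. Any continuous injection from such a space into $\R^N$ is automatically an embedding, so my goal reduces to exhibiting $2|\omega|'$ smooth $\d_u$-invariant functions on $Z_\omega$ whose joint values separate distinct trajectories. I will split $K_\omega=(\Phi,\Psi)$ into two batches of $|\omega|'$ functions each.

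The first batch is $\Phi=(x_{i,l})_{i,l}$: the coordinates themselves. They are smooth on the ambient space and $\d_u$-invariant, because every trajectory lies in a fiber of the projection $(u,x)\mapsto x$, and they separate any two trajectories sitting over different values of $x$.

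The work is in producing $\Psi$ to separate the (finitely many) trajectories that share the same $x$. These arise precisely when some factor $f_i(u,x)=(u-\alpha_i)^{\omega_i}+\sum_{l} x_{i,l}(u-\alpha_i)^l$ with $\omega_i\geq 2$ develops distinct real roots, pinching the fiber $\{P_\omega(\cdot,x)\leq 0\}$ into several disjoint intervals in $u$. For each of the $|\omega|'$ indices $(i,l)$ I define
\[
\Psi_{i,l}(u,x)\;=\;\eta_i(x)\int_{-A}^{u}(s-\alpha_i)^l\,\chi\!\bigl(P_\omega(s,x)\bigr)\,ds,
\]
where $A$ bounds $Z_\omega$ in $u$, $\chi:\R\to[0,1]$ is a smooth cutoff with $\chi\equiv 0$ on $(-\infty,0]$ and $\chi\equiv 1$ on $[\delta,\infty)$, and $\eta_i$ is a smooth function of $x$ that vanishes flatly along the semi-algebraic locus where $f_i(\cdot,x)$ has no multiple real roots. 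On $Z_\omega$ the integrand is supported outside every trajectory, hence $\Psi_{i,l}$ is $\d_u$-invariant; across the pinch created by a double root of $f_i$ it records an $l$-th moment of the positive-$P_\omega$ gap to the left of $u$, which differs from one side of the pinch to the other and thereby distinguishes the two sub-intervals.

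The main obstacle is smoothness across the merging strata: as a double root of $f_i$ opens up, the naive integral develops $(-x_{i,\cdot})^{k/2}$-type non-smoothness (already visible in the local model $\omega=(1,2,1)$, $P=(u^2-1)(u^2+x_{2,0})$, where two trajectories coalesce as $x_{2,0}\uparrow 0$). The flat cutoff $\eta_i$ annihilates these singularities to all orders, while the fact that $Z_\omega$ excludes the gap between the pinched sub-intervals means no trajectory ever crosses the locus where the integrand transitions, so the product is genuinely smooth on a neighborhood of $Z_\omega$. If a direct analytic check turns out cumbersome, one can instead construct $\Psi$ stratum-by-stratum: start at the deepest stratum $\mathcal T(v,\omega)\subset\mathcal T_\omega$, define $\Psi$ there with the needed separating values, and successively extend outward to less special strata by repeated application of Lemma \ref{lem2.2}. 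In either case, injectivity of $K_\omega$ is immediate from the separation properties of $\Phi$ and $\Psi$, and the embedding follows from compactness.
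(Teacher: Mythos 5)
Your core idea---integrating a smooth cutoff of $P_\omega$ against the flow variable to manufacture a $\d_u$-invariant separating function---is a genuinely different route from the paper's. The paper builds its second batch of coordinates $\tilde x$ by exploiting the cone structure on the coefficient ball $B_\e$ (the Vi\`ete-map contractions $A_{x_\star}$) and a carefully calibrated shift function $\phi$ with $0<\phi(t)<t$ and flat jet at $0$; the algebraic structure of the discriminant stratification does the separating. Your integral $\int_{-A}^{u}(s-\alpha_i)^l\,\chi(P_\omega(s,x))\,ds$ is more elementary and sidesteps all of that. It is constant along trajectories because $\chi\circ P_\omega\equiv 0$ on $Z_\omega$, it is manifestly smooth in $(u,x)$, and the $l=0$ moment is strictly increasing from one connected component of a $\pi$-fiber to the next as long as $\chi>0$ on $(0,\infty)$; combined with the $x$-coordinates and the compact-Hausdorff-to-Hausdorff-injection argument, this does give the embedding.

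That said, two parts of your write-up are off and one needs to be removed. First, the smoothness worry is unfounded: $\int_{-A}^u g(s,x)\,ds$ is automatically $C^\infty$ in $(u,x)$ whenever the integrand $g=(s-\alpha_i)^l\,\chi(P_\omega)$ is $C^\infty$, regardless of how the roots of $P_\omega(\cdot,x)$ move. The $(-x_{i,\cdot})^{1/2}$-type singularity you fear would appear only if you took a hard indicator $\mathbb 1_{\{P>0\}}$ or if you used the moving roots as integration limits; your fixed lower limit $-A$ and smooth $\chi$ already kill it. Second, and more seriously, the factor $\eta_i$ as you describe it is self-defeating: a smooth function that ``vanishes flatly along the semi-algebraic locus where $f_i(\cdot,x)$ has no multiple real roots'' vanishes on an open dense subset of the $x$-ball, hence is identically zero, annihilating $\Psi_{i,l}$ entirely. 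You should simply drop $\eta_i$ (or set $\eta_i\equiv 1$)---it is not needed for smoothness and it cannot help with separation. Third, your hypotheses on $\chi$ ($\chi\equiv 0$ on $(-\infty,0]$, $\chi\equiv 1$ on $[\delta,\infty)$) do not by themselves force $\chi>0$ on $(0,\delta)$; a $\chi$ that is still zero on $(0,\delta/2]$ would fail to detect a gap whose peak $P_\omega$-value is below $\delta/2$. You need to require $\chi>0$ on all of $(0,\infty)$ (a standard cutoff satisfies this). Finally, the proposed fallback via Lemma~\ref{lem2.2} does not close the argument as stated: that lemma extends a smooth trajectory-constant function from a closed set, but offers no control of injectivity of the extension, so it cannot substitute for a separation argument.
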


\begin{proof} Evidently, the $x$-coordinates $x : Z_\omega \to \R^{|\omega|'}$ provide us with a map $\chi:  \mathcal T_\omega \to \R^{|\omega|'}$, given by the algebraic functions which are constant on the $\d_u$-trajectories in  $Z_\omega$. Unfortunately, $\chi$ does not separate some trajectories; that is, $\chi$ is not an embedding (just a finitely ramified map).  We will complement $x$ with another smooth map $\tilde x : Z_\omega \to \R^{|\omega|'}$, also constant on the trajectories in $Z_\omega$ and such that the pair  of maps $(x, \tilde x)$ will separate the points of $ \mathcal T_\omega$.

To construct $\tilde x$, we will use some facts from \cite{K3}, Section 4. Recall that the ball $B_\e := \{\|x\| \leq \e\}$ has a special cone structure. With the help of the Viet\'{e} map, the cone structure is  given by the local linear contractions in $\C$ of each ``near-by" divisor $D_\C(P(\sim, x_\star))$ on the ``core" divisor $D_\C(P(\sim, 0))$. This contraction produces a smooth algebraic curve \hfill\break $A_{x_\star} :  [0, 1] \to B_\e$ in the coefficient $x$-space (a generator of the ``cone"),  which  connects the given point $x_\star$ to the origin $0$. In particular, the combinatorial type of the divisor  $D_\R(P(\sim, A_{x_\star}(t)))$ is constant for all $t \in (0,1]$.

Let $S_{x_\star} =_{\mathsf{def}} \R \times A_{x_\star}$ be the ruled $(u, t)$-parametric surface that projects along the $u$-direction onto the curve $A_{x_\star}$.  Consider the intersection $\Sigma_{x_\star}$ of $S_{x_\star}$ with the set $Z_\omega$. As $x_\star \in \d B_\e$ varies, the surfaces $\{\Sigma_{x_\star}\}$ span $Z_\omega$ (the trajectory $\{x = 0\}$ serves as the binder of an  open book whose pages are the $\Sigma_{x_\star}$'s) (see Fig. 2). 

We will define a new projection $\tilde x: \Sigma_{x_\star} \to A_{x_\star}$ as follows. 
Consider the $u$-directed line $L_x$ through $x$. For a typical point $x \in A_{x_\star}$  let $\Pi_x =_{\mathsf{def}} L_x \cap \Sigma_{x_\star}$. The set $\Pi_x$ is a disjointed union of closed intervals  $\{I_i(x) = [\underline\a_i(x), \bar\a_i(x)]\}_i$ (where $\underline \a_i(x)<  \bar\a_i(x)$ are two adjacent roots of the polynomial $P(u, x)$ in (\ref{eqVERSAL})) residing in the line $L_x$. We order them so that $I_1(x) < I_2(x) < \dots < I_s(x)$ as sets (see Fig. 2, the left diagram).

Put $$\Pi_x^\vee =_{\mathsf{def}} (L_x \setminus \Pi_x) \cap [\a_{min}(x),\; \a_{max}(x)],$$  where $\a_{min}(x),\, \a_{max}(x)$ denote the minimal and the maximal real roots of the $u$-polynomial $P_\omega(u, x)$. Thus $\Pi_x^\vee$ is a finite disjoint union of closed intervals $$\{I_i^\vee(x) = [\bar\a_i(x),\, \underline\a_{i+1}(x)]\}_i,$$  residing in the line $L_x$. Note that $P_\omega \geq 0$ in each interval $I_i^\vee(x)$. We also order the intervals so that, as sets,  $$I_1^\vee(x) < I_2^\vee(x) < \dots < I^\vee_{s -1}(x).$$ 

Let $\tau_i(x)$ denote the length of the interval $I_i^\vee(x)$.\smallskip

We fix a smooth monotone function $\chi : [0, +\infty) \to [0, 1)$ such that $\chi(0) = 0$ and $\lim_{\tau \to +\infty} \chi(\tau) = 1$ (say, $\chi = \frac{2}{\pi}\tan^{-1}$). Consider a smooth $\tau$-parametric family ($\tau \in [0, +\infty)$) of smooth monotonically increasing functions $\phi_\tau: [0, 1] \to \R_+$ such that: \hfill \break (1) $0 < \phi_\tau(t) < t$ for all $t \in (0, 1]$, (2) the infinite order jet of $\phi_\tau$ of at $t=0$ coincides with the jet of the identity function $t: [0, 1] \to [0, 1]$,  (3) $\phi_\tau(1) = \chi(\tau)$, and (4) $\phi_\tau(t)$ is a smooth function in $t$ and $\tau$.

For each $i$,  we map the point $$\big(\bar\a_i\big(A_{x_\star}(t)\big),\, t\big) \in \d_1^+\Sigma_{x_\star}(\d_u)$$ to the point $$\big(\underline\a_i\big(A_{x_\star}(\phi_{\tau_i(x_\star)}(t))\big),\; \phi_{\tau_i(x_\star)}(t)\big) \in  \d_1^-\Sigma_{x_\star}(\d_u).$$  We denote by $\theta_{x_\star, i}$ this map. As a function in $(u, t)$, the map $\theta_{x_\star, i}$ is smooth. We notice that, $\phi_{\tau_i(x_\star)}(t) \neq t$ for all $t \in (0, 1]$ and $x_\star \neq \vec 0$. We also observe that, if the interval $I_i^\vee(x_\star)$ shrinks to a singleton as we vary $x_\star$, then the map $\theta_{x_\star, i}$ approaches the identity.

\begin{figure}[ht]
\centerline{\includegraphics[height=1.8in,width=2.9in]{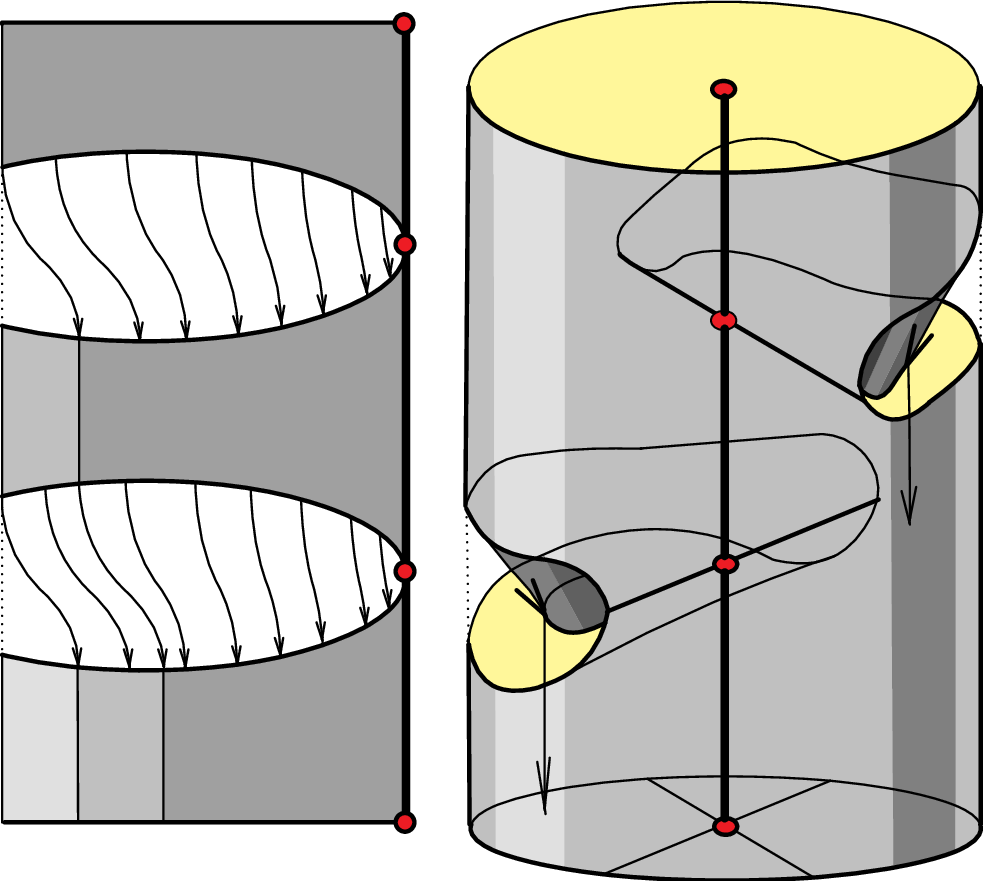}}
\bigskip
\caption{\small{The map $\tilde x:  \Sigma_{x_\star} \to A_{x_\star}$ over some arc $A_{x_\star}$ (on the left) and  the map $\tilde x : Z_\omega \to \R^{|\omega|'}$ (the deformed projection of the cylinder  with indentations on its base).}} 
\end{figure}

Now we define $\tilde x:  \Sigma_{x_\star} \to A_{x_\star}$ by the following formulas (see Fig. 2): 

\begin{eqnarray}\label{eq2.8}
\tilde x & =_{\mathsf{def}} & x \; \text{for all points in}\, I_1(x),  \nonumber\\ 
& =_{\mathsf{def}} & \theta_{x_\star, 1}\,  \, \text{for all points in}\, I_2(x),  \nonumber\\
& =_{\mathsf{def}} &  \theta_{x_\star, 2}\circ \theta_{x_\star, 1}\,  \, \text{for all points in}\, I_3(x),  \nonumber\\
& =_{\mathsf{def}} &  \theta_{x_\star, 3}\circ \theta_{x_\star, 2}\circ \theta_{x_\star, 1}\,  \, \text{for all points in}\, I_4(x),  \nonumber\\
& =_{\mathsf{def}} & \;\;\; \dots 
\end{eqnarray}

Since $0 < \phi_\tau(t) < t$ for all $t \in (0, 1]$, the map $\tilde x: Z_\omega \to \R^n$ separates the trajectories that are not distinguished by the map $x: Z_\omega \to \R^n$. Therefore the smooth map $$J_\omega=_{\mathsf{def}} (x, \tilde x): Z_\omega \to \R^{2|\omega|'},$$ being constant on each trajectory, gives rise to a a smooth (in the sense of Definition \ref{def2.2}) embedding $K_\omega: \mathcal T_\omega \to \R^{2|\omega|'}$. 
\end{proof}

\noindent{\bf Remark 2.1.} It seems that the desired embedding $K_\omega: \mathcal T_\omega \to \R^{2|\omega|'}$ cannot be delivered by analytic functions. \hfill $\diamondsuit$

\begin{figure}[ht]\label{fig8.3}
\centerline{\includegraphics[height=2.5in,width=3.8in]{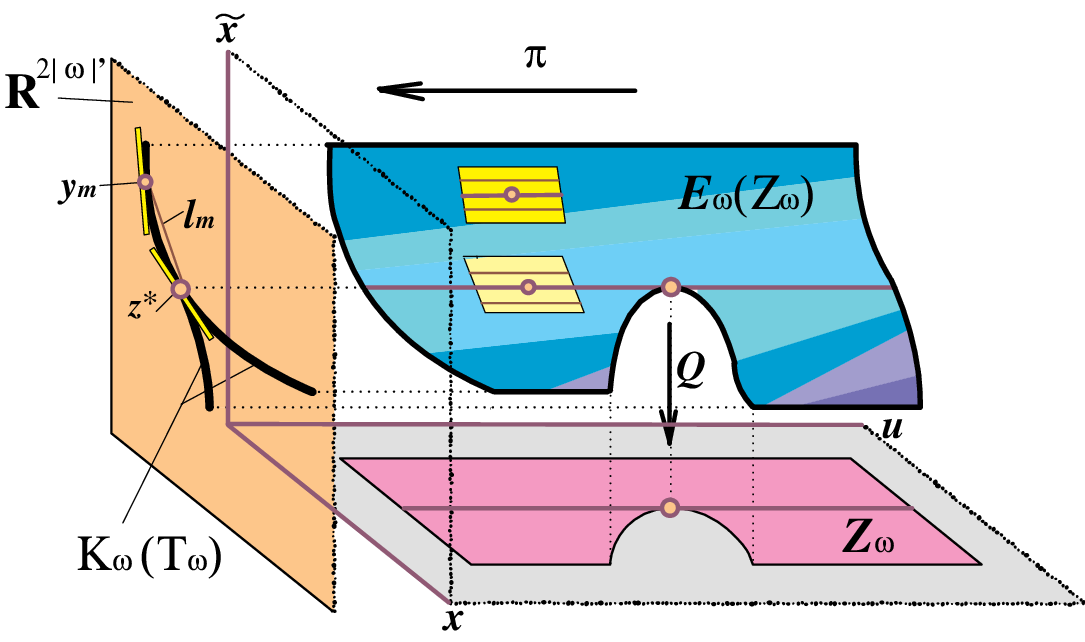}}
\bigskip
\caption{\small{The space $E_\omega(Z_\omega) \subset \R \times  \R^{2|\omega|'}$ and its projections $\pi$ and $Q$ on $K_\omega(\mathcal T_\omega)\subset \R^{2|\omega|'}$ and on $Z_\omega \subset \R \times  \R^{|\omega|'}$.}} 
\end{figure}

\begin{corollary}\label{cor2.3} The image $K_\omega(\mathcal T_\omega) \subset \R^{2|\omega|'}$ is a Whitney $(\omega_{\preceq})$-stratified space.
\end{corollary}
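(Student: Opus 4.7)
The plan is to verify the two conditions of Definition~\ref{def2.3} for the image $K_\omega(\mathcal T_\omega) \subset \R^{2|\omega|'}$ equipped with the stratification by $\{K_\omega(\mathcal T_\omega(\tilde\omega))\}_{\tilde\omega \in \omega_{_\bullet\preceq}}$, where $\mathcal T_\omega(\tilde\omega)$ denotes the pure stratum of type $\tilde\omega$ inside $\mathcal T_\omega$.

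For condition (1), I would combine two ingredients. By Lemma 3.4 of \cite{K2}, each pure stratum $\mathcal T_\omega(\tilde\omega)$ is an open smooth manifold cut out of the coefficient space $\R^{|\omega|'}$ by the open semi-algebraic conditions defining the combinatorial divisor type $\tilde\omega$. Moreover, $K_\omega$ restricts to a smooth embedding on each such stratum: smoothness in the sense of Definition~\ref{def2.1} follows from the construction of $(x, \tilde x)$ via smooth functions constant along $\d_u$-orbits, while injectivity on each individual stratum was precisely the point of adjoining $\tilde x$ in the proof of Lemma~\ref{lem2.3}. Hence each $K_\omega(\mathcal T_\omega(\tilde\omega))$ is locally a smooth submanifold of $\R^{2|\omega|'}$.

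For condition (2), the Whitney regularity, my plan is to pull the question back to $Z_\omega \subset \R^{|\omega|'+1}$. This closed semi-algebraic set admits a canonical Whitney stratification refining its decomposition by $\d_u$-orbit combinatorics, via the classical theorem of Whitney and \L{}ojasiewicz on semi-algebraic sets. This stratification is $\d_u$-invariant because $P_\omega(u,x)$ depends on $u$ only through binomials in $(u - \a_i)$, while the coefficients $x_{i,l}$ are $u$-constant; hence local transversal sections of the flow, with their inherited stratification, carry Whitney regularity down to the quotient $\mathcal T_\omega$. Given sequences $p_k, q_k \to q$ as in Definition~\ref{def2.3}~(2) with $p_k \in K_\omega(\mathcal T_\omega(\tilde\beta))$ and $q_k, q \in K_\omega(\mathcal T_\omega(\tilde\alpha))$, I would lift them via the smooth embedding $K_\omega$ to points on local transversal sections in $Z_\omega$, apply Whitney condition (b) there, and then push the inclusion $l \subset \tau$ forward, using the full rank of $dK_\omega$ on each stratum.

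The main obstacle I anticipate lies in the analytical control of $\tilde x$ near the \emph{binder} $\{x = 0\}$ of the open book $\{\Sigma_{x_\star}\}$. There several pages meet, and the composed maps $\theta_{x_\star, i}\circ \cdots \circ \theta_{x_\star, 1}$ defining $\tilde x$ on the higher intervals $I_i(x)$ degenerate as $x \to 0$. The infinite-order flatness of $\phi(t) - t$ at $t = 0$ is precisely what is needed to ensure that all partial derivatives of $\tilde x$ extend continuously to those of $x$ at the binder, so that the extra directions contributed by $\tilde x$ become infinitesimally tangent to the $x$-directions there and hence the limiting tangent planes and limiting secant lines from different pages fit together coherently. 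Verifying this continuity of partial derivatives to all orders---essentially a jet-matching computation against the factorization of $P_\omega(u,x)$---is the technical heart of the argument. Once that is in hand, the Whitney regularity of $K_\omega(\mathcal T_\omega)$ is inherited from that of the semi-algebraic $x$-projection.
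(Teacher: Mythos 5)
Your proposal takes a different route from the paper's and leaves a gap that the paper's machinery is specifically designed to avoid. The paper does not try to push Whitney regularity through $K_\omega$ via transversal sections. Instead it introduces the graph embedding $E_\omega: Z_\omega \to \R\times\R^{2|\omega|'}$, $E_\omega(u,x) := (u, x, \tilde x(u,x))$, observes that $E_\omega(Z_\omega)$ is a smooth manifold (it is the graph of the smooth function $\tilde x$ constructed in Lemma~\ref{lem2.3}), and that the projection $Q: E_\omega(Z_\omega) \to Z_\omega$ is a stratification-preserving diffeomorphism respecting the $\d_u$-induced foliations. All limiting tangent planes and secant lines in $\R^{2|\omega|'}$ are pulled back along $\pi^{-1}$ to cylinders in $\R\times\R^{2|\omega|'}$; because these live inside the tangent space to the \emph{smooth} manifold $E_\omega(Z_\omega)$, they can be transported through the linear isomorphism $DQ$ to $Z_\omega$, where the Whitney condition is drawn from the discriminant-variety stratification of $\R^{|\omega|'}$ cited from \cite{K3}. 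In particular the jet-level control of $\tilde x$ at the binder that you identify as ``the technical heart'' is not revisited in the corollary: it was already consumed by Lemma~\ref{lem2.3} to produce a \emph{smooth} $\tilde x$, after which the only thing needed is that $E_\omega(Z_\omega)$ is a smooth manifold.

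Two concrete problems with your version. First, asserting that $Z_\omega$ ``admits a canonical Whitney stratification refining its decomposition by $\d_u$-orbit combinatorics'' does not yield that the $\d_u$-orbit decomposition itself satisfies Whitney~(b): coarsening a Whitney stratification does not in general preserve Whitney regularity, so the existence of a finer good stratification is not the assertion you actually need. The paper confronts this head on: it uses the discriminant-variety stratification of $\R^{|\omega|'}$ (via \cite{K3}, Theorems 4.1--4.2) together with an explicit argument for why regularity of the finer $\omega_\preceq$-stratification in $Z_\omega$ forces the required property $\tilde{\mathsf B}$ for the coarser $\omega_{\preceq_\bullet}$-stratification of $E_\omega(Z_\omega)$, using the combinatorial decomposition $\Xi(\rho)$. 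Second, ``full rank of $dK_\omega$ on each stratum'' does not by itself say how a limiting secant line in $\R^{2|\omega|'}$ joining points in \emph{different} strata corresponds to anything in $Z_\omega$: $K_\omega$ is only a stratum-wise embedding of the singular space $\mathcal T_\omega$, and the across-strata transfer of secant lines and tangent planes is exactly what the diffeomorphism $Q: E_\omega(Z_\omega)\to Z_\omega$ (rather than $K_\omega$) performs.
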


\begin{proof} It is useful to consult with Fig. 3 that illustrates some key elements of the proof. 

Let $\pi: \R \times \R^{2|\omega|'} \to \R^{2|\omega|'}$ denote the obvious projection. Put $K =_{\mathsf{def}} K_\omega$. Consider the map $$E =_{\mathsf{def}} E_\omega :\;  Z_\omega \to \R \times  \R^{2|\omega|'},$$ given by the formula $E(u, x) =_{\mathsf{def}} (u, J(u, x))$. Since $J =_{\mathsf{def}} J_\omega = (x, \tilde x)$, the map $E$ is a regular embedding, given by smooth functions on $Z_\omega$. Consider the projection $$Q: \R \times \R^{2|\omega|'} \to \R \times \R^{|\omega|'},$$  given by the formula $Q(u, x, \tilde x) =_{\mathsf{def}} (u, x)$. By the definition, $Q(E(Z_\omega)) = Z_\omega$.\smallskip

Let  $\mu \prec \nu$ be two elements in the poset $\omega_\preceq\subset \mathbf \Omega^\bullet$, and $\mathcal K_\mu, \mathcal K_\nu$ the two pure strata of $K(\mathcal T_\omega) \subset \R^{2|\omega|'}$, indexed by $\mu, \nu$ (thus $\mathcal K_\mu \subset \bar{\mathcal K}_\nu$). Consider a sequence of points $\{y_m \in \mathcal K_\nu\}_m$  and a  sequence of points $\{z_m \in \mathcal K_\mu\}_m$, both converging to a point $z_\star \in \mathcal K_\mu$. We need to verify that, if the tangent spaces $\{T_{y_m}\mathcal K_\nu\}_m$ converge in $\R^{2|\omega|'}$ to an affine space $T_\star$ containing $z_\star$, and the sequence of lines $\{l_m \supset [z_m, y_m]\}_m$ converges to a line $l_\star \subset \R^{2|\omega|'}$, then $l_\star \subset  T_\star$.  

Equivalently, we need to verify that if the spaces $\{\mathsf T_m =_{\mathsf{def}} \pi^{-1}(T_{y_m}\mathcal K_\nu)\}_m$ converge in $\R\times\R^{2|\omega|'}$ to an affine space $\mathsf T_\star =_{\mathsf{def}}  \pi^{-1}(T_\star) \supset \pi^{-1}(z_\star)$ (these spaces are depicted as parallelograms in Fig. 3) and the sequence of $2$-planes $\{\mathsf L_m =_{\mathsf{def}}\pi^{-1}(l_m)\}_m$ converges to a plane $\mathsf L_\star =_{\mathsf{def}} \pi^{-1}(l_\star) \subset \R\times\R^{2|\omega|'},$ then $\mathsf L_\star  \subset  \mathsf T_\star$. We call this conjectured property ``$\tilde{\mathsf B}$". 

Note that all the affine spaces $\mathsf T_m, \mathsf T_\star, \mathsf L_m$, and $\mathsf L_\star$, are fibrations with the line fibers which are parallel to the direction of $\R$ in the product $\R\times \R^{2|\omega|'}$.\smallskip

We can think of $E(Z_\omega)$ as a \emph{graph} of a smooth map $\tilde x$ from $Z_\omega$ to $\R^{|\omega|'}$. Since  $Q: E(Z_\omega)  \to Z_\omega$ is a $(\omega_\preceq)$-stratification-preserving  diffeomorphism which respects the $\d_u$-induced $1$-foliations $\mathcal F$ on $E(Z_\omega)$ and $\mathcal G$ on $Z_\omega$, the tangent spaces to the $\nu$-indexed pure stratum in $E(Z_\omega)$ are mapped by $Q$ isomorphically onto the tangent space to the $\nu$-indexed pure stratum in $Z_\omega$.  So, with the help of the graph-manifold $E(Z_\omega)$, any tangent space to the $\nu$-indexed pure stratum in $Z_\omega$ \emph{determines} the corresponding tangent space to the $\nu$-indexed pure stratum in $E(Z_\omega)$.

Let $\tilde\tau_\star$ denote the tangent space to $E(Z_\omega)$ at a generic point $\tilde z_\star \in \pi^{-1}(z_\star)$, and let  $\tau_\star$ denote the tangent space to $Z_\omega$ at the point $Q(z_\star)$. By the very definitions of $\mathsf T_\star$ and $\mathsf L_\star$ as limit objects and using that $E(Z_\omega)$ is a smooth manifold, carrying the foliation $\mathcal F$ (whose leaves are parallel lines in $\R\times\R^{2|\omega|'}$), we get that $\mathsf T_\star  \subset \tilde\tau_\star$ and $\mathsf L_\star  \subset \tilde\tau_\star$. 

Since $Q:  E(Z_\omega) \to Z_\omega$ is a diffeomorphism, $Q: \tilde\tau_\star \to \tau_\star$ is an isomorphism of vector spaces. Therefore there exist unique subspaces of  $\tilde\tau_\star$ that are mapped by $Q$ onto $Q(\mathsf T_\star)$ or onto $Q(\mathsf L_\star)$; these are exactly the spaces $\mathsf T_\star$ and $\mathsf L_\star$, respectively. Thus, $Q(\mathsf L_\star) \subset Q(\mathsf T_\star)$ if and only if $\mathsf L_\star  \subset \mathsf T_\star$.

Therefore  property $\tilde{\mathsf B}$ is equivalent to the following property $\mathsf B$:
\smallskip

``\emph{If the spaces $\{Q(\mathsf T_m )\}_m$ converge in $\R \times \R^{|\omega|'}$ to the affine space $Q(\mathsf T_\star)$, and the sequence of planes $\{Q(\mathsf L_m)\}_m$ converges to a plane $Q(\mathsf L_\star) \subset \R \times \R^{|\omega|'}$, then $Q(\mathsf L_\star) \subset Q(\mathsf T_\star)$}".
\smallskip

Note that the composition $Q \circ K:  \mathcal T_\omega \to \R^{|\omega|'}$ is delivered by employing the algebraic map $x: Z_\omega \to \R^{|\omega|'}$. The image $Q(K(\mathcal T_\omega)) \subset \R^{|\omega|'}$ is stratified by the collection of real discriminant varieties, their complements, and their multiple self-intersections, indexed by various $\mu \in \omega_\preceq$ (as described in \cite{K4}).  In particular, these strata are {\sf semi-algebraic sets}. \smallskip

By the fundamental results of \cite{Har1}, \cite{Har2}, and \cite{Hi}, the semi-analitic sets are Whitney stratified spaces. As a result, the $(\omega_{\preceq})$-stratified space $Q(K(\mathcal T_\omega))$ is a Whiney space. Thus property $\mathsf B$ is valid, since all the affine spaces, relevant to $\mathsf B$, are fibrations with the line $\pi$-fibers over the corresponding spaces in  $\R^{|\omega|'} \supset  Q(K(\mathcal T_\omega))$. Therefore, the $(\omega_\preceq)$-stratified space $K(\mathcal T_\omega)$ is a Whitney $(\omega_{\preceq})$-stratified space in $\R^{2|\omega|'}$.
\end{proof}

\begin{theorem}\label{th2.2} For a traversally  generic vector field $v$ on a $(n+1)$-dimensional $X$, the $\mathbf\Om^\bullet_{'\langle n]}$-stratified trajectory space $\mathcal T(v)$ can be given the structure of a Whitney space (residing in an Euclidean space).
\end{theorem}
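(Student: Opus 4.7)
The plan is to glue the local Whitney embeddings furnished by Lemma \ref{lem2.3} and Corollary \ref{cor2.3} into a global smooth embedding $K : \mathcal T(v) \hookrightarrow \R^M$, using Lemma \ref{lem2.2} as the indispensable tool for manufacturing the trajectory-constant bump functions demanded by a partition-of-unity construction in the surrogate smooth category of Definition \ref{def2.1}.

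First I would choose a finite open cover $\{U_\alpha\}_{\alpha=1}^N$ of $X$ by flow-adjusted charts of the standard form $(u, x, y): U_\alpha \to \R \times \R^{|\omega_\alpha|'} \times \R^{n - |\omega_\alpha|'}$ guaranteed by traversal genericity (Lemma 3.4, \cite{K2}), where $\d_1X$ is cut out by the polynomial equation (\ref{eq2.4}) with combinatorial type $\omega_\alpha$. The projections $\mathcal U_\alpha := \Gamma(U_\alpha)$ give an open cover of the compact trajectory space, and on each $\mathcal U_\alpha$ Lemma \ref{lem2.3} supplies a smooth embedding $K_{\omega_\alpha} : \mathcal U_\alpha \hookrightarrow \R^{2|\omega_\alpha|'}$ whose image, by Corollary \ref{cor2.3}, is Whitney stratified by $(\omega_\alpha)_{_\bullet \preceq}$.

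Next I would shrink to a finite closed cover $\{F_\alpha \subset \mathcal U_\alpha\}$ of $\mathcal T(v)$ and produce, for each $\alpha$, a function $\rho_\alpha : \mathcal T(v) \to [0,1]$, smooth in the sense of Definition \ref{def2.1}, with $\rho_\alpha \equiv 1$ on $F_\alpha$ and $\mathrm{supp}(\rho_\alpha) \subset \mathcal U_\alpha$. These are built from Lemma \ref{lem2.2}: take an ordinary smooth bump function on a transversal section to the $\hat v$-flow that cuts every trajectory in $\Gamma^{-1}(F_\alpha)$, regard it via trajectory-constancy as a smooth function on the closed saturated set $\Gamma^{-1}(F_\alpha)$, extend it by $0$ on the disjoint closed set $X \setminus \Gamma^{-1}(\mathcal U_\alpha)$, and invoke Lemma \ref{lem2.2} to produce a smooth trajectory-constant global extension. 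Assemble
$$K(\gamma) := \bigl(\rho_1(\gamma)\,K_{\omega_1}(\gamma),\ \rho_1(\gamma),\ \dots,\ \rho_N(\gamma)\,K_{\omega_N}(\gamma),\ \rho_N(\gamma)\bigr) \in \R^M,$$
each $\rho_\alpha K_{\omega_\alpha}$ understood to be $0$ outside $\mathrm{supp}(\rho_\alpha)$. Injectivity holds because any two distinct $\gamma, \gamma' \in \mathcal T(v)$ can be separated by some $\rho_\alpha$ (when the values differ) or by $K_{\omega_\alpha}$ (when $\rho_\alpha(\gamma) = \rho_\alpha(\gamma') > 0$, so both $\gamma, \gamma'$ lie in $\mathcal U_\alpha$ where $K_{\omega_\alpha}$ is already injective); compactness of $\mathcal T(v)$ then promotes $K$ to a topological embedding.

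To verify that $K(\mathcal T(v))$ is Whitney, I exploit the local nature of the Whitney conditions. Fix $\gamma_0 \in \mathcal T(v)$ and select $\alpha$ with $\gamma_0$ in the interior of $F_\alpha$, so that $\rho_\alpha \equiv 1$ on a neighborhood $\mathcal V$ of $\gamma_0$. On $\mathcal V$ the $\alpha$-block of $K$ recovers $K_{\omega_\alpha}$ exactly, while every other coordinate $\rho_\beta K_{\omega_\beta}$, $\rho_\beta$ is a smooth function of the local-model coordinates. Hence $K|_{\mathcal V}$ is, up to a local diffeomorphism of $\R^M$ carrying the image of $K_{\omega_\alpha}$ onto a graph, nothing but the local Whitney embedding of Corollary \ref{cor2.3}. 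Since Whitney regularity is a local property preserved by smooth diffeomorphisms of the ambient Euclidean space and inherited by graphs of smooth maps over Whitney bases, the global image $K(\mathcal T(v))$ is Whitney at each of its points. The main obstacle is the construction of $\{\rho_\alpha\}$ inside the surrogate smooth category: ordinary partitions of unity on $X$ are not trajectory-constant and so fail to descend to $\mathcal T(v)$, and Lemma \ref{lem2.2} must be applied with some care when the extension procedure crosses tangency strata $\d_jX(v)$, since the transversal section used to seed each bump must meet every trajectory in $\Gamma^{-1}(F_\alpha)$ exactly once; the inductive-over-heights mechanism inside the proof of Lemma \ref{lem2.2}, together with small enough $U_\alpha$, handles this.
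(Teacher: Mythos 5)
Your overall plan — local embeddings from Lemma \ref{lem2.3}/Corollary \ref{cor2.3}, globalization via Lemma \ref{lem2.2}, then a ``graph over a Whitney base'' argument — is the same skeleton the paper uses, with one genuine variation: you use a trajectory-constant partition of unity $\{\rho_\alpha\}$ to paste the $K_{\omega_\alpha}$'s, whereas the paper extends each local embedding's coordinate functions directly across $\mathcal T(v)$ by Lemma \ref{lem2.2} (no bumps, no vanishing). Both globalizations are legitimate; the direct extension is slightly cleaner because one never has to worry about the vanishing locus of $\rho_\alpha$ when recovering $K_{\omega_\alpha}$ locally. A small slip: Lemma \ref{lem2.3} embeds the trajectory space of the $|\omega|'$-dimensional model $Z_\omega$ into $\R^{2|\omega|'}$, but a flow-box in $X$ also carries $n-|\omega|'$ of the $y$-coordinates, so the chart must land in $\R^{2|\omega_\alpha|'}\times\R^{n-|\omega_\alpha|'}$ as in the paper's $K_r$; with only $2|\omega_\alpha|'$ coordinates the chart map on $\mathcal U_\alpha$ is not injective.

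The genuine gap is in your Whitney verification. You invoke the principle ``graphs of smooth maps over Whitney bases are Whitney,'' asserting that near $\gamma_0$ the remaining coordinates of $K$ are \emph{ambient} smooth functions of the $K_{\omega_\alpha}$-block. That is precisely the point that needs proof, and it is not immediate: the remaining coordinates are only smooth in the surrogate sense of Definition \ref{def2.1} (their pullbacks to $X$ are smooth), and $K_{\omega_\alpha}(\mathcal U_\alpha)$ is a singular subset of Euclidean space, so ``being a smooth function on it'' does not automatically mean ``being the restriction of a smooth function on a neighborhood.'' The paper resolves this by adjoining one more coordinate, a Morse function $h$ with $dh(v)>0$, to obtain a \emph{regular smooth embedding} $E=(h,K\circ\Gamma):X\hookrightarrow\R\times\R^N$, and then argues at the manifold level: $E(U_r)$ is an honest graph over $E_r(U_r)$ via the projection $Q_r$, and the limiting tangent spaces and secant lines relevant to Whitney's condition lift to the $\pi$-preimages, where they sit inside the tangent spaces of the smooth manifold $E(U_r)$. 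Without this lift to $X$ you do not have a manifold to carry the graph argument, and the claim ``$K|_{\mathcal V}$ is, up to a local diffeomorphism, the local Whitney embedding'' is unsupported. Your proposal should therefore add the Morse-function coordinate and carry out the comparison at the level of the embeddings $E$ and $E_r$, which is exactly the reduction from property $\tilde{\mathsf A}$ to property $\mathsf A$ in the paper's proof.
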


\begin{proof} Let $\mathcal U =_{\mathsf{def}} \{U_r\}_r$ be a finite $v$-adjusted closed cover of $X$, such that each $U_r \subset \hat X$ admits special coordinates $(u, x, y) =_{\mathsf{def}} (u^{(r)}, x^{(r)}, y^{(r)})$ in which $\d X$ is given by the polynomial equation $\{P_r(u, x) = 0\}$ as in (\ref{eqVERSAL}).  Recall that, for a traversally generic $v$, the equation is determined by the combinatorial type $\omega_r$ of the core trajectory $\g_r \subset U_r$. 

Let us denote by $\mathcal T_r$ the space of trajectories of the  $\d_u$-flow in the domain $$U_r =_{\mathsf{def}} \{P_r(u, x) \leq 0,\, \|x\| \leq \e,\, \|y\| \leq \e'\}.$$ It is a compact subset of $\mathcal T(v)$. 

Consider the  embeddings $$K_r: \mathcal T_r \to \R^{2|\omega_r|'} \times \R^{n - |\omega_r|'}\; \text{ and } \;E_r : U_r \to \R \times  \R^{2|\omega_r|'}  \times \R^{n - |\omega_r|'},$$ given by the formulas 
$$K_r\big(\g_{\{u^{(r)},\, x^{(r)},\, y^{(r)}\}}\big) =_{\mathsf{def}} \big(x^{(r)},\; \tilde x^{(r)}(u^{(r)}, x^{(r)}),\; y^{(r)}\big),$$  
$$E_r\big(u^{(r)},\, x^{(r)},\, y^{(r)}\big) =_{\mathsf{def}} \big(u^{(r)},\, x^{(r)}, \; \tilde x^{(r)}(u^{(r)}, x^{(r)}),\; y^{(r)}\big).$$ 
Here $\g_{\{u^{(r)},\,  x^{(r)}, \, y^{(r)}\}}$ denotes the $\d_u$-trajectory in $U_r$, passing through the point $(u^{(r)}, x^{(r)},\, y^{(r)})$, and  $\tilde x^{(r)}(u^{(r)}, x^{(r)})$ is a function as in Corollary \ref{cor2.3} (see Figures 2 and 3). \smallskip

Smooth functions $\psi: \mathcal T_r \to \R$ are exactly the smooth functions on $U_r \cap X$ that are constant along the trajectories. By Lemma \ref{lem2.2}, each $\psi$ extends to a smooth function on $X$ which is constant on each trajectory.  We denote this extension $\hat\psi$.
\smallskip

Therefore, the local embeddings $\{K_r: \mathcal T_r \to \R^{2|\omega_r|'}\times \R^{n - |\omega_r|'}\}_r$ extend to some smooth maps $\{\hat K_r: \mathcal T(v) \to \R^{2|\omega_r|'}\times \R^{n - |\omega_r|'}\}_r$. Together they produce a smooth  embedding $K: \mathcal T(v) \to \R^N$, where $K =_{\mathsf{def}}  \prod_r \hat K_r$ and $\R^N =_{\mathsf{def}} \prod_r \big(\R^{2|\omega_r|'}\times \R^{n - |\omega_r|'}\big)$. 
\smallskip

Let $G: X \to \R^N$ be the composition $\Gamma \circ K$, where $\Gamma: X \to \mathcal T(v)$ is the obvious map.

We choose again a function $h:\hat  X \to \R$ such that $dh(\hat v) > 0$ in $\hat X$ (see Lemma  4.1 from \cite{K1}). With the help of $h$, we get a map $E: X \to \R \times \R^N$ given by the formula $E(z) := (h(z), G(z))$. Since  $dh(v) > 0$ and the Jacobian of each map $J_{\omega_r} =_{\mathsf{def}} (x^{(r)},\tilde x^{(r)}, y^{(r)})$ is of the maximal rank in $U_r$, the map $E$ is a regular smooth embedding. 

Composing $E$ with the obvious projection $\pi: \R \times \R^N \to \R^N$, we get the smooth (see Definition \ref{def2.2}) embedding  $K: \mathcal T(v) \to \R^N$. \smallskip

Our next goal is to show that $K(\mathcal T(v))$ is a Whitney stratified space in $\R^N$. Since Definition \ref{def2.3} of Whitney space is local, it suffices to check its validity in each local chart $\mathcal T_r \subset \mathcal T(v)$, that is, to verify that $K(\mathcal T_r) \subset \R^N$ is a Whitney space. \smallskip

The arguments below are very similar to the ones used while proving Corollary \ref{cor2.3}.

Consider the projection $p_r: \R^N \to \R^{2|\omega_r|'} \times \R^{n- |\omega_r|'}$, produced by omitting the product $\prod_{s \neq r} (\R^{2|\omega_s|'} \times \R^{n- |\omega_s|'})$ from the product $\prod_q (\R^{2|\omega_q|'} \times \R^{n- |\omega_q|'})$. Let  $$Q_r: \R\times\R^N \to \R\times\R^{2|\omega_r|'} \times \R^{n- |\omega_r|'}$$  denote the projection $id_\R \times p_r$. \smallskip

Note that the projection $Q_r$ generates a diffeomorphism between the manifold $E(U_r) \subset \R \times \R^N$ and the manifold $E_r(U_r) \subset \R \times \R^{2|\omega_r|'} \times \R^{n- |\omega_r|'}$, a diffeomorphism that respects the oriented $1$-foliations, induced by the $v$-flow on $X$, as well as the $(\omega_r)_\preceq$-stratifications of $E(U_r)$ and $E_r(U_r)$ by combinatorial types of $v$-trajectories (or rather of the $\pi$-fibers).  

We denote these foliations by $\mathcal F_r$ and $\mathcal G_r$, respectively.\smallskip

Let  $\mu \prec \nu$ be two elements in the poset $(\omega_r)_\preceq$, and let $\mathcal K_\mu, \mathcal K_\nu$ be the two pure strata of $K(\mathcal T_r) \subset \R^N$, indexed by $\mu, \nu$. Consider a sequence of points $\{y_m \in \mathcal K_\nu\}_m$  and a  sequence of points $\{z_m \in \mathcal K_\mu\}_m$, both converging to a point $z_\star \in \mathcal K_\mu$. We need to verify that, if the tangent spaces $\{T_{y_m}\mathcal K_\nu\}_m$ converge in $\R^N$ to an affine space $T_\star$ containing $z_\star$, and the sequence of lines $\{l_m \supset [z_m, y_m]\}_m$ converges to a line $l_\star \subset \R^N$, then $l_\star \subset  T_\star$.  

Equivalently, we need to verify that, if the spaces $\{\mathsf T_m =_{\mathsf{def}} \pi^{-1}(T_{y_m}\mathcal K_\nu)\}_m$ converge in $\R\times\R^N$ to an affine space $\mathsf T_\star =_{\mathsf{def}} \pi^{-1}(T_\star) \supset \pi^{-1}(z_\star)$, and the sequence of $2$-planes $\{\mathsf L_m =_{\mathsf{def}} \pi^{-1}(l_m)\}_m$ converges to a plane $\mathsf L_\star =_{\mathsf{def}} \pi^{-1}(l_\star) \subset \R\times\R^N$, then $\mathsf L_\star  \subset  \mathsf T_\star$. Let us call this conjectured property ``$\tilde{\mathsf A}$". Note that all the affine spaces $\mathsf T_m, \mathsf T_\star, \mathsf L_m$, and $\mathsf L_\star$, are fibrations with the line fibers parallel to the direction of $\R$ in $\R\times \R^N$.\smallskip

We can think of $E(U_r)$ as a graph of a smooth map from $E_r(U_r)$ to $\prod_{s \neq r} \R^{2|\omega_s|'} \times \R^{n-|\omega_s|'}$. Since  $Q_r: E(U_r)  \to E_r(U_r)$ is a stratification-preserving  diffeomorphism which respects the $v$-induced $1$-foliations $\mathcal F_r$ and $\mathcal G_r$, the tangent spaces to the $\nu$-indexed pure stratum in $E(U_r)$ are mapped \emph{isomorphically}  by $Q_r$ onto the tangent space to the $\nu$-indexed pure stratum in $E_r(U_r)$.  So, with the help of the graph-manifold $E(U_r)$, any tangent space to the $\nu$-indexed pure stratum in $E_r(U_r)$ determines the corresponding tangent space to the $\nu$-indexed pure stratum in $E(U_r)$.

Let $\tilde\tau_\star$ denote the tangent space to $E(U_r)$ at a generic point $\tilde z_\star \in \pi^{-1}(z_\star)$, and let  $\tau_\star$ denote the tangent space to $E_r(U_r)$ at the point $P_r(z_\star)$. By the very definitions of $\mathsf T_\star$ and $\mathsf L_\star$ as limit objects, and using that $E(U_r)$ is a smooth manifold carrying the foliation $\mathcal F_r$ (whose leaves are parallel lines in $\R\times\R^N$), we get that $\mathsf T_\star  \subset \tilde\tau_\star$ and $\mathsf L_\star  \subset \tilde\tau_\star$. 

Since $Q_r: E(U_r) \to E_r(U_r)$ is a diffeomorphism, $Q_r: \tilde\tau_\star \to \tau_\star$ is an isomorphism of vector spaces. Therefore there exist unique subspaces of  $\tilde\tau_\star$ that are mapped by $Q_r$ onto $Q_r(\mathsf T_\star)$ or onto $Q_r(\mathsf L_\star)$; these are exactly the spaces $\mathsf T_\star$ and $\mathsf L_\star$, respectively. Thus, $Q_r(\mathsf L_\star) \subset Q_r(\mathsf T_\star)$ if and only if $\mathsf L_\star  \subset \mathsf T_\star$.

Hence property $\tilde{\mathsf A}$ is equivalent to the following property $\mathsf A$:

\emph{If the spaces $\{Q_r(\mathsf T_m )\}_m$ converge in $\R \times \R^{2|\omega_r|'} \times \R^{n -|\omega_r|'}$ to the affine space $Q_r(\mathsf T_\star)$, and the sequence of planes $\{Q_r(\mathsf L_m)\}_m$ converges to a plane $Q_r(\mathsf L_\star) \subset \R \times \R^{2|\omega_r|'}$, then $Q_r(\mathsf L_\star) \subset Q_r(\mathsf T_\star)$}.
\smallskip

By Corollary \ref{cor2.3}, $K_r(\mathcal T_r) \subset \R^{2|\omega_r|'} \times \R^{n -|\omega_r|'}$ is a Whitney space. Therefore, property $\mathsf A$ is valid. So the property $\tilde{\mathsf A}$ has been validated as well. As a result, $K(\mathcal T(v))$ is a Whitney stratified space in $\R^N$.
\end{proof}

\noindent{\bf Remark 2.2. } It is desirable to find a more direct proof of Theorem \ref{th2.2}, the proof that will validate Whitney's property $\tilde{\mathsf B}$ geometrically, without relying on the heavy general theorems that claim: ``semi-analytic sets are Whitney spaces". In fact, the discriminant varieties in $\R^d_{\mathsf{coef}}$ that correspond to various combinatorial patterns $\omega$ for real divisors of real $d$-polynomials, do have remarkable intersection patterns for their tangent spaces and cones (see \cite{K4}). Perhaps, these properties of discriminant varieties  should be in the basis of a ``more geometrical" proof. \hfill $\diamondsuit$

\begin{corollary}\label{cor2.4} Let $X$ be an $(n + 1)$-dimensional compact smooth manifold, carrying a traversally generic vector field $v$. Then the following claims are valid:
\begin{itemize}
\item The space of trajectories $\mathcal T(v)$ admits the structure of finite cell/simplicial complex. 
\item For each  $\omega \in \mathbf \Omega^\bullet _{' \langle n]}$, the stratum $\mathcal T(v, \omega_{\succeq_\bullet})$ is a  codimension $|\omega|'$ subcomplex of $\mathcal T(v)$. 
\item With respect to an appropriate cellular/simplicial structure in $X$, the obvious map $\Gamma: X \to \mathcal T(v)$ is cellular/simplicial. 
\item Moreover, $\Gamma: X \to \mathcal T(v)$ is a homotopy equivalence.
\end{itemize}
\end{corollary}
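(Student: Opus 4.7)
\smallskip

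\textbf{Proof proposal.} The plan is to deduce all four assertions from Theorem \ref{th2.2} together with the local normal forms recorded in formula (\ref{eq2.4}) and in Lemma 3.4 of \cite{K2}.

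First, Theorem \ref{th2.2} realises $\mathcal T(v)$ as a compact Whitney stratified subspace of $\R^N$, with strata labelled by $\Omega^\bullet_{'\langle n ]}$. I would invoke the classical triangulation theorem for compact Whitney stratified sets (Goresky's ``Triangulation of stratified objects'', building on Whitney and Hironaka) to produce a finite simplicial complex structure on $\mathcal T(v)$ in which every closed stratum $\mathcal T(v, \omega_{\succeq_\bullet})$ is a subcomplex. To determine the codimension of $\mathcal T(v, \omega_{\succeq_\bullet})$, I would use Lemma 3.4 of \cite{K2}: the pure stratum $\mathcal T(v, \omega)$ is an open smooth manifold of dimension $n - |\omega|'$, while the top stratum $\mathcal T(v, (11))$ has dimension $n$; thus $\mathcal T(v, \omega_{\succeq_\bullet})$ has codimension $|\omega|'$ in $\mathcal T(v)$.

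Next, I would pull this simplicial structure back to $X$ via $\Gamma$. By Lemma 3.4 of \cite{K2}, for each $\omega$ the restriction $\Gamma : X(v, \omega) \to \mathcal T(v, \omega)$ is a fibration whose fibre is a compact oriented interval (or a single point); in the standard coordinates $(u, x, y)$ of (\ref{eq2.4}) the total space is just the polynomial slab $\{P_\omega(u,x) \le 0\}$ fibred over the $(x, y)$-parameters. After subdividing the triangulation of $\mathcal T(v)$ so that its trace in each local chart $K_r(\mathcal T_r)$ is a subcomplex, I can triangulate $X$ fibrewise by the standard prismatic subdivision of a closed interval bundle over a simplex, and then glue these prismatic triangulations across adjacent strata using the compatibility of the special coordinates between overlapping charts. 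This produces a finite simplicial structure on $X$ with respect to which $\Gamma$ is simplicial and carries each closed trajectory onto a vertex or an edge collapsed to a vertex.

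Finally, the homotopy equivalence follows because each point-preimage $\Gamma^{-1}(\gamma) = \gamma$ is a contractible cell (a compact interval or a point). Since $\Gamma : X \to \mathcal T(v)$ is now a proper cellular surjection between finite $CW$-complexes whose fibres are contractible, it is a cell-like map between compact ANRs and hence a homotopy equivalence (equivalently, by Vietoris--Begle $\Gamma$ induces an isomorphism on all homology and on $\pi_1$ --- the latter via the van Kampen argument applied to small cellular neighbourhoods of fibres --- so Whitehead's theorem applies). The main technical obstacle is the gluing step in the construction of the $X$-triangulation: the fibrewise prismatic subdivisions produced over neighbouring pure strata $\mathcal T(v, \omega)$ and $\mathcal T(v, \omega')$ with $\omega \prec_\bullet \omega'$ must agree on their common boundary, and this requires choosing the subdivision of the discriminantal chart $K_r(\mathcal T_r)$ compatibly with the way a single trajectory interval in $X(v, \omega')$ degenerates into the union of several intervals (separated by tangency points) in $X(v, \omega)$. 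The Whitney regularity established in Theorem \ref{th2.2}, together with the explicit polynomial form of $\partial_1 X$ in each chart, is exactly what makes this gluing achievable.
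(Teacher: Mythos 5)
Your proposal follows the same overall route as the paper's proof for the first three assertions: both rely on Theorem \ref{th2.2} to exhibit $\mathcal T(v)$ as a compact Whitney stratified subset of Euclidean space, both invoke the classical triangulation theorems for Whitney stratified spaces (the paper cites Goresky, Johnson, and Verona) to obtain a finite simplicial structure in which closed strata are subcomplexes, and both pull the triangulation back to $X$ using the fact that $\Gamma$ over each open simplex is a trivial fibration with interval or point fibres. You are more explicit than the paper about the two technical points — the dimension count $\dim \mathcal T(v,\omega) = n - |\omega|'$ from Lemma 3.4 of \cite{K2} that gives the codimension, and the need to make the fibrewise prismatic subdivisions agree across neighbouring strata — where the paper contents itself with ``with a bit more work, one can refine the cellular structures.''

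The one genuine divergence is in the final step. The paper finishes by citing Theorem 5.1 of \cite{K1}, which asserts that $\Gamma : X \to \mathcal T(v)$ is a weak homotopy equivalence, and then applies Whitehead's theorem since both spaces are now CW-complexes. You instead argue directly from the fact that every point-preimage of $\Gamma$ is a contractible compactum: $\Gamma$ is then a proper cell-like surjection between finite-dimensional compact ANRs, hence a homotopy equivalence (or, in your alternative formulation, Vietoris--Begle plus a van Kampen argument on fibre neighbourhoods, followed by Whitehead). Your route is more self-contained — it does not lean on the prior paper's Theorem 5.1 — at the price of invoking cell-like map theory, which requires knowing (as you correctly note) that both spaces are finite-dimensional ANRs, a fact supplied once the CW structures are in place. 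Both arguments are valid; the paper's is shorter because it can cite its predecessor, while yours would read better in a self-contained exposition.
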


\begin{proof} By Theorem \ref{th2.2}, the trajectory space $\mathcal T(v)$ of a traversally generic flow admits a structure of a Whitney space  embedded in some ambient Euclidean space. 

The fundamental results of \cite{Gor}, \cite{Jo}, and \cite{Ve} claim that the Whiney spaces $Y$ admit smooth triangulations $\tau: T \to Y$, amenable to their stratifications. The adjective ``smooth" here refers to the homeomorphism $\tau$ being smooth on the interior of each simplex $\D$ (remember, for a traversally generic $v$, the pure strata $\mathcal T(v, \omega)$ are smooth manifolds!).  With respect to such triangulations, the strata are subcomplexes. 
Therefore $\mathcal T(v)$ admits a finite triangulation so that each stratum $\mathcal T(v, \omega_{\succeq_\bullet})$ is a subcomplex.

For traversing vector fields $v$, over each open simplex $\D^\circ \subset \mathcal T(v)$, the map $\Gamma: X \to \mathcal T(v)$ is a trivial fibration whose fibers are either closed segments, or singletons. Thus each set $\Gamma^{-1}(\D^\circ)$ is homeomorphic either to the cylinder $\D^\circ \times [0, 1]$, or to $\D^\circ$. This introduces a cellular structure on $X$ such that $\Gamma$ becomes a cellular map. With a bit more work, one can refine the cellular structures in $X$ and $\mathcal T(v)$, so that $\Gamma$ becomes a simplicial map. 

Since, by Theorem 5.1 from \cite{K1}, $\Gamma: X \to \mathcal T(v)$ is a weak homotopy equivalence and both spaces are $CW$-complexes, we conclude that $\Gamma$ is a homotopy equivalence \cite{Wh1}.
\end{proof}

\noindent{\bf Remark 2.3.} Most probably, $\mathcal T(v)$ is a compact $CW$-complex for any traversing and boundary generic (and not necessary \emph{traversally} generic) vector field $v$. However, for such vector fields, we do not have the ``open book" algebraic models (as in Fig. 2) for their interactions with boundary $\d_1X$ in the vicinity of a typical trajectory. So we do not know how to extend the previous arguments to a larger class of traversing vector fields. 
\hfill $\diamondsuit$
\smallskip

Next, we introduce one key construction from \cite{K8} which will turn out to be very useful throughout our investigations, especially in proving the Holography Theorem \ref{th3.HOLO}.
\begin{figure}[ht]
\centerline{\includegraphics[height=1.8in,width=2.9in]{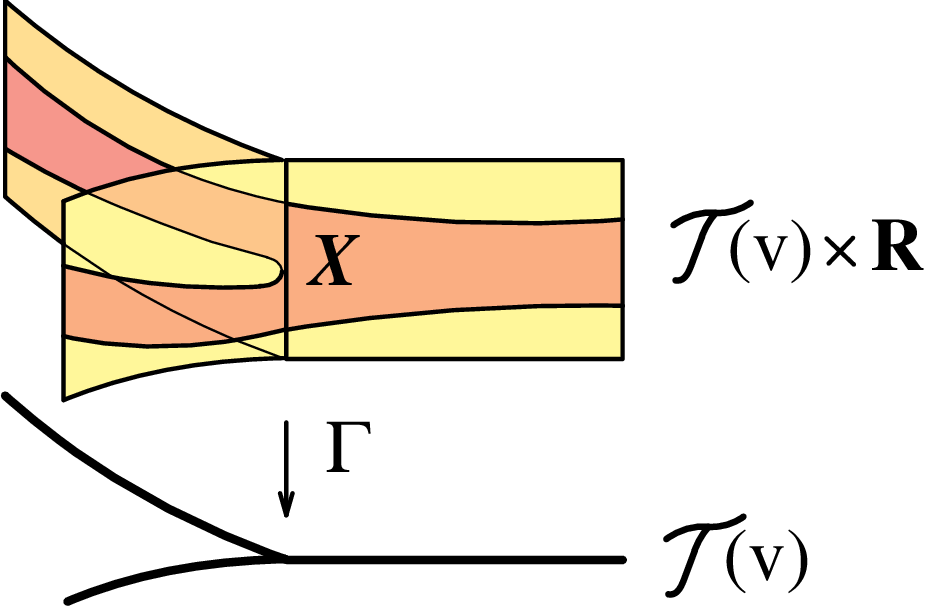}}
\bigskip
\caption{\small{The embedding $\a(f, v)$ of $X$ into the product $\mathcal T(v) \times \R$.}} 
\end{figure}
\begin{lemma}\label{lem3.1} For any traversing vector field $v$ on $X$, there is an embedding $\a: X \subset \mathcal T(v) \times \R$. In fact, any pair $(f, v)$ such that $df(v) > 0$  generates such an embedding $\a = \a(f, v)$ in a canonical fashion. 

For any smooth map $\b: \mathcal T(v) \to \R^N$, the composite map $$A(v, f): X \stackrel{\mathcal \a}{\longrightarrow} \mathcal T(v) \times \R \; \stackrel{\b \times id}{\longrightarrow}\; R^N \times \R$$ is smooth.
\smallskip

Any two embeddings $\a(f_1, v)$ and $\a(f_2, v)$ are isotopic through homeomorphisms, provided that $df_1(v) > 0, df_2(v) > 0$.
\end{lemma}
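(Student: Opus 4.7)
The natural construction is to set
\[
\alpha(f,v)(x) := \bigl(\Gamma(x),\, f(x)\bigr) \in \mathcal T(v)\times \R,
\]
where $\Gamma: X \to \mathcal T(v)$ is the canonical projection. My plan is to verify in turn: (i) $\alpha$ is a topological embedding, (ii) the composite $A(v,f) = (\beta\times \mathrm{id}) \circ \alpha$ is smooth in the sense of Definition \ref{def2.1}/\ref{def2.2}, and (iii) the embeddings $\alpha(f_1,v),\alpha(f_2,v)$ are isotopic through embeddings by a straight-line homotopy on the $f$-factor.

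For (i), injectivity is the one substantive point. If $\alpha(x_1) = \alpha(x_2)$ then $\Gamma(x_1)=\Gamma(x_2)$, so $x_1, x_2$ lie on a common $v$-trajectory $\gamma$. Since $v$ is traversing, $\gamma$ is either a singleton in $\d_1 X$ (in which case $x_1 = x_2$ trivially) or a closed segment along which $v$ does not vanish. On that segment the hypothesis $df(v) > 0$ forces $f$ to be strictly monotone, and the equality $f(x_1) = f(x_2)$ then gives $x_1 = x_2$. Continuity of $\alpha$ follows from continuity of $\Gamma$ (as a quotient map) and of $f$. Because $X$ is compact and $\mathcal T(v) \times \R$ is Hausdorff (the trajectory space of a traversing field is Hausdorff, cf.\ the Whitney embedding proved in Theorem \ref{th2.2}), $\alpha$ is automatically a homeomorphism onto its image.

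For (ii), the first coordinate of $A(v,f)$ is $\beta \circ \Gamma$, which is smooth on $X$ by Definition \ref{def2.1} applied to each coordinate function of $\beta$; the second coordinate is simply $f$, which is smooth by assumption. So the composite $X \to \R^N \times \R$ is smooth in the ordinary sense, and one then reinterprets smoothness on $\mathcal T(v) \times \R$ via Definition \ref{def2.1} and the product structure (a function on $\mathcal T(v)\times \R$ is smooth iff its pull-back to $X \times \R$ along $\Gamma \times \mathrm{id}$ is smooth, and the required pull-backs factor through $\beta \circ \Gamma$ and $f$).

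For (iii), set $f_t := (1-t) f_1 + t f_2$ for $t \in [0,1]$. Then
\[
df_t(v) = (1-t)\, df_1(v) + t\, df_2(v) > 0
\]
throughout $X$ for every $t \in [0,1]$, since both summands are nonnegative and at least one is strictly positive at each point. Hence by part (i) each $\alpha(f_t,v)$ is a topological embedding of $X$ into $\mathcal T(v)\times\R$. The map
\[
H: [0,1]\times X \longrightarrow \mathcal T(v)\times\R, \qquad H(t,x) := \bigl(\Gamma(x),\, f_t(x)\bigr),
\]
is continuous and each slice $H(t,\cdot)$ is a homeomorphism onto its image, so $H$ is the required isotopy. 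No serious obstacle arises: the only point that requires care is the Hausdorffness of $\mathcal T(v)$ needed to promote the continuous injection $\alpha$ to a homeomorphism onto its image, and this is available from the stratified/Whitney structure established in Theorem \ref{th2.2}.
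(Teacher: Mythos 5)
Your proposal follows essentially the same route as the paper: define $\alpha(f,v)(x) = (\Gamma(x), f(x))$, use strict monotonicity of $f$ along trajectories for injectivity, invoke Definition~\ref{def2.1} for the smoothness of $A(v,f)$, and get the isotopy by observing that $\{f : df(v) > 0\}$ is a convex cone (your linear interpolation $f_t = (1-t)f_1 + tf_2$ is exactly a path in that cone). The only place you go beyond the paper is in explicitly promoting the continuous injection to a homeomorphism onto its image via compactness of $X$ and Hausdorffness of $\mathcal T(v)\times\R$; your ``cf.~Theorem~\ref{th2.2}'' citation for that Hausdorffness is slightly off-target, since Theorem~\ref{th2.2} is restricted to traversally generic fields while this lemma is for arbitrary traversing fields, but Hausdorffness of $\mathcal T(v)$ for a traversing field on a compact manifold is true and elementary, so this is a citation nit rather than a gap.
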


\begin{proof} We know that a traversing $v$ admits a Lyapunov function $f$. Since $f$ is strictly increasing along the $v$-trajectories, any point $x \in X$ is determined by the $v$-trajectory $\g_x$ through $x$ and the value $f(x)$.  Therefore, $x$ is determined by  the point $\g_x \times f(x) \in \mathcal T(v) \times \R$. By the definition of topology in $\mathcal T(v)$, the correspondence $\a(f, v): x \rightarrow \g_x \times f(x)$ is a continuous map. 

In fact, $\a(f, v)$ is a smooth map in the spirit of Definition \ref{def2.1}: more accurately,  for any map $\b: \mathcal T(v) \to \R^N$, given by $N$ smooth functions on $\mathcal T(v)$, the composite map $A(v, f): X \to \R^N \times \R$ is smooth.  The verification of this fact is on the level of definitions. 
\smallskip

For a fixed $v$, the condition $df(v) > 0$ defines an open \emph{convex} cone $\mathcal C(v)$ in the space $C^\infty(X)$. Thus, $f_1$ and $f_2$ can be linked by a path within the space of nonsingular functions on $X$, which results in  $\a(f_1, v)$ and $\a(f_2, v)$ being homotopic through homeomorphisms. 
\end{proof}

\noindent{\bf Remark 2.4.} By examining  Figure 4, we observe an interesting phenomenon: the embedding $\a: X \subset \mathcal T(v) \times \R$ does not extend to an embedding of a larger manifold $\hat X \supset X$, where $\hat X \setminus X \approx \d_1X \times [0, \e)$. In other words, $\a(\d_1X)$ has no outward ``normal field" in the ambient $\mathcal T(v) \times \R$; in that sense, $\a(\d_1X)$ is  \emph{rigid} in $\mathcal T(v) \times \R$! \hfill $\diamondsuit$




\section{The Causality-based Holography Theorems}

Now we are in position to formulate the question in the center of this paper:

{\it ``Is it is possible to reconstruct a manifold $X$ and a traversing $v$-flow on $X$ from some $v$-generated data, available on the boundary $\d X$?"}
\smallskip

When such a reconstruction is possible (see Theorem \ref{th3.HOLO} and Corollary \ref{cor3.9}), the corresponding proposition deserves the adjective ``{\sf holographic}" in its name\footnote{We own an apology to the fellow physisits: the name does not suggest a direct connection to the holography principles in the quantum field theory and the dual theories of gravity.}.

Given a traversing field $v$ on $X$, consider the map $$C_v: \d_1^+X(v) \to \d_1^-X(v)$$ that takes any point $x \in \d_1^+X$ to the next point $y$ from the set $\g_x \cap \d_1 X$, the order on the trajectory $\g_x$ being defined by $v$. We call $C_v$ the {\sf causality map} of $v$ (see Theorem \ref{th3.6} for a justification of the name). 
\smallskip

Of course the traversing fields have no closed trajectories. Nevertheless, in the world of such fields on manifolds $X$ with boundary, the causality map can be thought as a weak \emph{substitute} for the Poincar\'{e} return map (see \cite{Te} for the definition of the Poincar\'{e} return map). The dynamics of $C_v$ under (finitely many) iterations reflects the \emph{concavity} of $X$ with respect to the $v$-flow (see \cite{K1}). The ``iterations" of $C_v$ are only \emph{partially-defined} maps. \smallskip

We are already familiar with the \emph{discontinuous} nature of $C_v$. Implicitly, it animates the investigations in  \cite{K1}-\cite{K4}.  The bright spot is that $C_v$ is {\sf semi-continuous} relative to any nonsingular function $f: X \to \R$ with the property $df(v) > 0$. This semi-continuity has the following manifestation: for any $x \in \d_1^+X$ and $\e > 0$, there is a neighborhood $U_\e(x) \subset \d_1X$ such that 
\begin{eqnarray}\label{eq4.1}
 f(C_v(y)) - f(y) & \geq  &0,  \;\; \text{and} \\ \nonumber
 f(C_v(y)) - f(y) & > & f(C_v(x)) - f(x) - \e\;\;  \text{for all}\;  y \in U_\e(x).
\end{eqnarray}

Note that $C_v(x) = x$ exactly when $x \in \d_2^-X^\circ \cup \d_3^-X^\circ \cup  \dots \cup \d_{n+1}^-X $. 
\smallskip

We may take alternative and more formal view of the map $C_v$. 

Note that a traversing $v$-flow on $X$ defines the structure of a {\sf partially ordered set} on $\d X$: we write $x \prec x'$, where $x, x' \in \d X$, if there is an ascending $v$-trajectory (not a singleton) that connects $x$ to $x'$.  Let us denote by $\mathcal C^\d(v)$ this poset $(\d X, \prec)$. Evidently,   $x \preceq x'$ if and only if $x'$ is an image of $x$ under a number of iterations of the causality  map $C_v$, provided $v$ being boundary generic. Therefore, the poset $\mathcal C^\d(v)$ allows for a reconstruction of the causality  map $C_v$.
\smallskip



\smallskip
\noindent{\bf Remark 3.1.}
 Note that Lemma 3.4  and formula $(3.19)$ from \cite{K2} provide, among other things,  for \emph{local models of the causality  maps}  $C_v$, generated by traversally  generic fields $v$. In the special coordinates $(u, x, y)$, $C_v$ amounts to taking each root of the $u$-polynomial $P(u, x)$, residing in a maximal interval $I(x)$ where $P(u, x) \leq 0$, either to the next root residing in $I(x)$, or to itself (when $I(x)$ happens to be a singleton). By Theorem 2.2 from \cite{K2}, this is a map from the semi-algebraic set $\{P(u, x) = 0, \, \frac{\d P}{\d u}(u, x) \geq 0, \, \|x\| \leq \e\}$ to the  the semi-algebraic set $\{P(u, x) = 0, \, \frac{\d P}{\d u}(u, x) \leq 0, \, \|x\| \leq \e\}$.  These observations  form a foundation of the notion of {\sf holographic structure} on $\d X$, a subject of future investigations. 
 
 \hfill $\diamondsuit$
\smallskip


For a traversing field $v$, the smooth functions on $X$ that are constant along each $v$-trajectory $\g$ give rise to smooth functions on $\d_1X$.  Such functions are constant along each $C_v$-trajectory $\g^\d = \g \cap \d_1X$. Furthermore, any smooth function on $\d X$ which is constant on each finite set $\g^\d$ gives rise to a unique \emph{continuous} function on $X$, which is constant along each trajectory $\g$. However, such functions may not be automatically smooth on $X$!  

For a traversing $v$, consider the algebra $\mathsf{Ker}(\mathcal L_v) \approx  C^\infty(\mathcal T(v))$ of smooth functions on $X$ that are constants along each $v$-trajectory. 

\begin{question}\label{q4.1} Given a boundary generic traversing vector field $v$ on $X$, how to characterize the image (trace) of the algebra $\mathsf{Ker}(\mathcal L_v)$ in the algebra $C^\infty(\d X)$ in terms of the causality map $C_v$? \hfill $\diamondsuit$
\end{question}

 Let $\mathcal L_v^{(k)}$ be the $k$-th iteration of the Lie derivative in the direction of the field $v$. For a boundary generic field $v$, we denote by $\mathsf m_j(v)$  the algebra of smooth functions $\psi$ on $\d X$ such that $(\mathcal L_v^{(k)}\psi) \big |_{\d_{k+1}X} = 0$ for  all $k \in [1, j]$.  Let us denote by $\mathsf m_j(v)^{C_v}$ the subalgebra of functions from $\mathsf m_j(v)$ that are constants on each $C_v$-trajectory $\g^\d = \g \cap \d X$. 
It is easy to check that $\mathsf{Ker}(\mathcal L_v)|_{\d X} \subset \mathsf m_n(v)^{C_v}$, however, the validity of the converse claim is not obvious.
\smallskip

Temporarily we move away from the category smooth maps towards the category of piecewise differentiable (``$\mathsf{PD}$" for short) maps.

\begin{definition}\label{def4.1} We say that a triangulation $\mathsf T^\d$ of $\d X$ is invariant under the causality  map $C_v: \d^+_1X \to \d^-_1X$, if the interior of each simplex  from $\mathsf T^\d$ is mapped homeomorphically by $C_v$ onto the interior of a simplex\footnote{Remember, $C_v$ is typically a discontinuous map!}.  \hfill $\diamondsuit$
\end{definition}

\begin{lemma}\label{lem4.2} If $v$ is a traversally  generic field on $X$, then the boundary $\d_1 X$ admits a $C_v$-invariant smooth triangulation. 
\end{lemma}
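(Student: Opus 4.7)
The plan is to induce the desired triangulation of $\d_1X$ from a triangulation of the trajectory space $\mathcal T(v)$ via the projection $\Gamma: X \to \mathcal T(v)$. The key observation is that $\Gamma \circ C_v = \Gamma|_{\d_1^+X}$, so every fibre of $\Gamma|_{\d_1X}$ is $C_v$-invariant; thus a well-chosen pullback triangulation will automatically be respected by $C_v$.

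First I would invoke Corollary \ref{cor2.4} to fix a smooth triangulation $\mathsf T^{\mathcal T}$ of $\mathcal T(v)$ which is amenable to the $\Omega^\bullet_{'\langle n]}$-stratification, i.e. each closed stratum $\mathcal T(v, \omega_{\succeq_\bullet})$ is a subcomplex. Over an open simplex $\D^\circ \subset \mathcal T(v, \omega)$, every trajectory $\g \in \D^\circ$ meets $\d_1X$ in the same ordered collection of $q := |\sup(\omega)|$ points with multiplicities $(\omega_1, \dots, \omega_q)$. Consequently, the preimage $\Gamma^{-1}(\D^\circ) \cap \d_1X$ splits canonically into $q$ disjoint ``sheets'' $\Sigma^1_\D, \dots, \Sigma^q_\D$, where $\Sigma^i_\D$ consists of the $i$-th intersection point (along $v$) of the trajectories in $\D^\circ$. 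Using the special coordinates $(u,x,y)$ of Lemma 3.4 of \cite{K2} and the polynomial model (\ref{eq2.4}), the root $u = \a_i(x)$ depends smoothly on $(x,y)$ as long as the combinatorial type of the divisor is preserved; hence $\Gamma|_{\Sigma^i_\D}$ is a smooth homeomorphism onto $\D^\circ$, and $\Sigma^i_\D$ is a smooth open cell inside the pure stratum $\d_{\omega_i}X^\circ$.

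Next I would show that the closed sheets $\{\overline{\Sigma^i_\D}\}_{\D, i}$ assemble into a finite smooth cell (in fact, simplicial) decomposition $\mathsf T^\d$ of $\d_1X$. Compatibility along boundaries follows from the compatibility of $\mathsf T^{\mathcal T}$ with the stratification by $\omega$: if $\omega' \succ_\bullet \omega$, the incidence of the $\omega$-sheets to the $\omega'$-sheets is dictated by the collision/splitting rule for the roots of the polynomial family (\ref{eq2.4}), which exactly matches the face relations in $\mathsf T^{\mathcal T}$ together with the canonical order on $\sup(\omega)$. A standard subdivision argument (or direct appeal to the Whitney structure on $\d_1X$ inherited from the semi-algebraic local models, via \cite{Go}, \cite{Jo}, \cite{Ve}) upgrades this cell decomposition to a smooth simplicial triangulation, refining $\mathsf T^\d$.

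Finally, I would verify $C_v$-invariance. For a trajectory $\g \in \D^\circ$, the causality map sends the $i$-th point $a_i \in \g \cap \d_1X$ to some $a_{\sigma(i)} \in \g\cap \d_1X$, where the permutation $\sigma = \sigma_\omega$ depends only on the combinatorial type $\omega$ (it pairs each entry point with the next exit point, skipping intermediate tangencies). Hence on the open sheet $\Sigma^i_\D$, the map $C_v$ is the composite $(\Gamma|_{\Sigma^{\sigma(i)}_\D})^{-1} \circ \Gamma|_{\Sigma^i_\D}$, which is a smooth homeomorphism of open simplices; and the induced match-up of the face structures follows from the fact that $\sigma_{\omega'}$ restricts to $\sigma_\omega$ under the natural inclusion of supports when $\omega' \succ_\bullet \omega$. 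This is exactly the $C_v$-invariance required by Definition \ref{def4.1}.

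The main obstacle I foresee is bookkeeping at the boundaries of the sheets, i.e.\ along loci where two or more intersection points of $\g \cap \d_1X$ collide (the $\d_{j}X$ strata with $j \geq 2$): here the sheet-labels $\{1,\dots,q\}$ have to be compared across different combinatorial types, and the order structure on $\sup(\omega)$ has to be matched with the combinatorial poset $\Omega^\bullet_{'\langle n]}$ of Whitney strata in $\mathcal T(v)$. The local polynomial models (\ref{eq2.4}) handle this explicitly---roots merge by the Vi\`ete map in a way that is manifestly smooth after suitable subdivision---so the remaining work is essentially organizing the combinatorics rather than resolving an analytic difficulty.
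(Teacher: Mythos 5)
Your proposal follows essentially the same route as the paper: both pull back a stratification-amenable triangulation of $\mathcal T(v)$ (from Corollary \ref{cor2.4}) via $\Gamma^{-1}$, using that $\Gamma$ restricted to $\Gamma^{-1}(\mathcal T(v,\omega))\cap\d_1X$ is a trivial finite covering (Corollary 5.1 of \cite{K3}), and observe that $C_v$-invariance is automatic because each $C_v$-trajectory is a $\Gamma$-fibre. Your elaboration of the sheet decomposition and the observation $\Gamma\circ C_v=\Gamma|_{\d_1^+X}$ make the invariance step more explicit, but do not change the argument; one small imprecision is the phrase ``skipping intermediate tangencies''---$C_v$ takes a point to the \emph{next} point of $\g\cap\d_1X$, including tangential ones.
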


\begin{proof} For boundary generic vector fields $v$, the map $\Gamma: \d_1 X \to \mathcal T(v)$ is finitely ramified surjection. For a traversally generic $v$, the lemma follows from Corollary \ref{cor2.4}: any triangulation of the trajectory space $\mathcal T(v)$, consistent with its $\mathbf\Omega^\bullet$-stratification, with the help of $\Gamma^{-1}$, lifts to a triangulation $\mathsf T^\d$ of $\d_1 X$. Indeed, for each $\omega$, by Corollary 5.1 from \cite{K3}, the map $$\Gamma: \Gamma^{-1}(\mathcal T(v, \omega)) \cap \d_1 X \to \mathcal T(v, \omega)$$ is a trivial covering. By its very construction, the triangulation $\mathsf T^\d$ is $C_v$-invariant. 
\end{proof}

\noindent{\bf Remark 3.2.} The existence of a triangulation on $\d_1 X$ by itself does not imply the existence of a triangulation on $\mathcal T(v)$: there are smooth manifolds that can serve as finite covering spaces over topological manifold bases that do not admit any triangulation! For example, the standard sphere may cover a non-triangulable fake real projective space (see \cite{CS}). \hfill $\diamondsuit$
\smallskip

\begin{figure}[ht]
\centerline{\includegraphics[height=1.5in,width=4in]{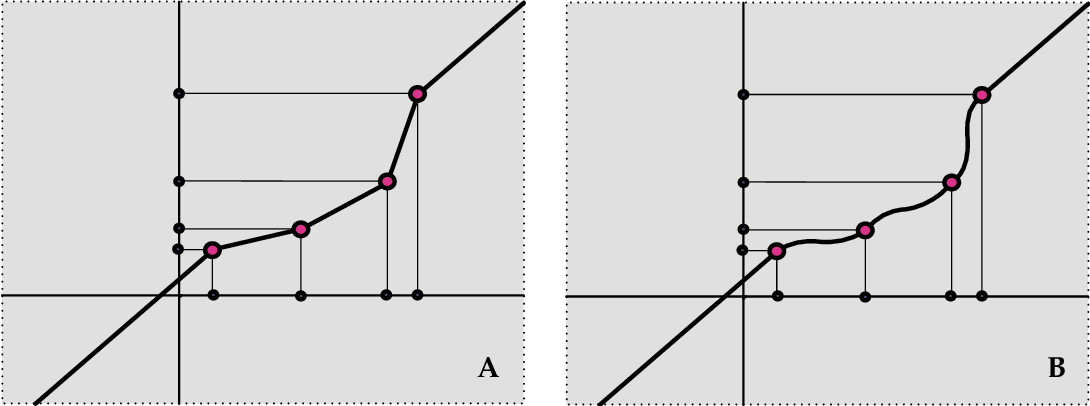}}
\bigskip
\caption{\small{The $\mathsf{PL}$ and smooth canonical interpolating homeomorphisms $\phi_{\vec x, \vec y}: \R \to \R$ that map a given sequence of 4 distinct numbers $\vec x$ to a given sequence $\vec y$ of 4 distinct numbers.}} 
\end{figure}

\noindent{\bf Remark 3.3.} Recall that, by  Whitehead's Theorem \cite{Wh}, 
any smooth manifold admits a unique $\mathsf{PD}$-structure (consistent with its differentiable structure). Therefore, different $C_v$-invariant smooth triangulations $\{\mathsf T^\d\}$ of $\d_1 X$ all are  $\mathsf{PD}$-equivalent, but perhaps not as $C_v$-\emph{invariant} triangulations!  In other words, a common refinement of two $C_v$-invariant differentiable triangulations of $\d_1X$ may be not $C_v$-invariant. \smallskip

We conjecture that any two smooth $C_v$-invariant triangulations have a $C_v$-invariant smooth refinement. That is, the trajectory space $\mathcal T(v)$ admits a \emph{unique} $\mathsf{PD}$-structure that is consistent with the preferred $\mathsf{PD}$-structure on the smooth manifold $\d_1 X$. 
\hfill $\diamondsuit$
\smallskip

Recall again that a function $f$ on a closed subset $Y$ of a smooth manifold $X$ is called {\sf smooth} if it is the restriction of a smooth function, defined in an open neighborhood of $Y$. \smallskip

Let $v$ be a traversing field on a compact manifold $X$, and $A \supset B$ two closed subsets of $\d_1X$. We denote by $X(v, A)$ and $X(v, B)$ the sets of $v$-trajectories through $A$ and $B$, respectively.\smallskip

To prove Theorem \ref{th3.HOLO} below, we need the following lemma.

\begin{lemma}\label{lem4.4} Let $v$ be a traversing and boundary generic vector field on a compact manifold $X$ and $A \subset B \subset \d X$ closed subsets. Consider a smooth function $$f: B \cup X(v, A) \to \R$$ such that $f(x) < f(x')$ for any two points $x \neq x'$ on the same trajectory, such that $x'$ can be reached from $x$ by moving along the trajectory in the direction of $v$. \smallskip

Then $f$ extends to a smooth function $F: X(v, B) \to \R$ such that $\mathcal L_v(F) > 0$ on $X(v, B)$.
\end{lemma}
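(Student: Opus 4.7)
My plan is to build $F$ in three stages: extend $f$ to a smooth function $\tilde f$ on $\hat X$; adjust $\tilde f$ near $B \setminus X(v, A)$ so that $\mathcal L_v \tilde f > 0$ on the entire closed set $B \cup X(v, A)$; and add a globally-defined correction to propagate positivity of $\mathcal L_v$ to all of $X(v, B)$.

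First, I read the hypothesis in its strong form $\mathcal L_v f > 0$ pointwise on $X(v, A)$, as is forced by the stated conclusion (since $F = f$ on $X(v, A) \subset X(v, B)$ and $\mathcal L_v F > 0$ is demanded). By Whitney extension for smooth functions on closed subsets, $f$ extends to some $\tilde f : \hat X \to \R$; by continuity, $\mathcal L_v \tilde f > 0$ on an open neighborhood of $X(v, A)$. At each $b \in B \setminus X(v, A)$, however, $\mathcal L_v \tilde f(b)$ is unconstrained. I refine $\tilde f$ inside a small flow-box $V_b$: in flow-adjusted coordinates $(u, y)$ with $v = \d_u$ and $B \cap V_b = \{u = \varphi_b(y)\}$, replace $\tilde f|_{V_b}$ by
\[
\tilde f(\varphi_b(y), y) + K_b \,(u - \varphi_b(y))
\]
for a constant $K_b > 0$. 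This preserves the values of $\tilde f$ on $B \cap V_b$ and yields $\mathcal L_v \tilde f = K_b > 0$ throughout $V_b$. Using a locally finite cover of $B \setminus X(v, A)$ by such flow-boxes, chosen disjoint from $X(v, A)$, and gluing via a smooth partition of unity equal to $1$ on a neighborhood of $B \cap V_b$ in $V_b$, I arrange $\mathcal L_v \tilde f > 0$ on an open neighborhood $W$ of $B \cup X(v, A)$, while keeping $\tilde f = f$ on $B \cup X(v, A)$.

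To propagate positivity from $W$ to all of $X(v, B)$, pick $h : \hat X \to \R$ with $\mathcal L_v h > 0$ globally (which exists by Lemma~4.1 of \cite{K1}, as $v$ is traversing). Select nested open neighborhoods $W'' \Subset W' \Subset W$ of $B \cup X(v, A)$ and a smooth cutoff $\rho : \hat X \to [0, 1]$ with $\rho \equiv 0$ on $W''$ and $\rho \equiv 1$ off $W'$. Set $F := \tilde f + \lambda\,\rho\,(h - c)$ for a constant $c$ and $\lambda > 0$. Since $\rho$ vanishes on $W'' \supset B \cup X(v, A)$, one has $F = f$ there; and
\[
\mathcal L_v F = \mathcal L_v \tilde f + \lambda\, \rho\, \mathcal L_v h + \lambda\, (h - c)\, \mathcal L_v \rho.
\]
On $W''$ the correction vanishes and $\mathcal L_v F = \mathcal L_v \tilde f > 0$; on the compact piece $X(v, B) \setminus W'$, the expression reduces to $\mathcal L_v \tilde f + \lambda\, \mathcal L_v h$, positive for $\lambda$ large.

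The main obstacle is the transition annulus $W' \setminus W''$, where the last term $\lambda (h - c)\,\mathcal L_v \rho$ may be negative and of magnitude on the order of $\lambda / \mathrm{thickness}(W' \setminus W'')$. I overcome it by centering $c$ at an $h$-value chosen near each component of $B \cup X(v, A)$ and shrinking $W'$ so that $\sup_{W'} |h - c| < \varepsilon$. Because $\mathcal L_v \tilde f$ has a uniform positive lower bound $\delta$ on the compact set $\overline{W' \setminus W''} \subset W$, and because the lower bound on $\lambda$ required by the exterior depends only on the fixed data on $X(v, B) \setminus W'$ and on $\inf \mathcal L_v h$ (independent of $\varepsilon$), the two inequalities $\lambda \varepsilon \sup|\mathcal L_v \rho| < \delta/2$ and $\lambda \inf \mathcal L_v h > \sup |\mathcal L_v \tilde f|$ can be met simultaneously by calibrating $\varepsilon$, the transition geometry, and $\lambda$. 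This $\lambda$-balance, together with the local flow-box construction above, is the only delicate point; the rest of the argument is routine.
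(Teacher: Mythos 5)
Your three-stage strategy (Whitney-extend, fix $\mathcal L_v$ near $B$, propagate with a cutoff) is genuinely different from the paper's, which instead runs an induction over the poset $\Omega^\bullet$ of combinatorial types, working from the most degenerate strata outward and extending $f$ one stratum $Z_\omega$ at a time via a section of the convex-fibered bundle $\mathcal F_\omega(f) \to \mathcal T_\omega$ of trajectory-monotone interpolants. Unfortunately your step (2) has a gap that sits exactly where the paper's stratified argument is doing the work. In a flow-adjusted chart around a boundary point $b$ of multiplicity $m(b)\ge 2$, the locus $\d_1X$ (and hence $B$, if $B$ contains a neighborhood of $b$ in $\d_1X$) is \emph{not} a single graph $\{u=\varphi_b(y)\}$: by formula (\ref{eq2.4}) it is the zero set $\{P_\omega(u,x)=0\}$ of a degree-$m(b)$ polynomial in $u$, a multi-sheeted hypersurface whose branches merge at $b$ as the transverse parameters degenerate. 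Your replacement $\tilde f(\varphi_b(y),y)+K_b\,(u-\varphi_b(y))$ matches $f$ only on the one sheet you chose and in general destroys the required identity $F=f$ on the remaining sheets of $B\cap V_b$, so it is not a legitimate local correction. To fix it you would need a local interpolant that matches $f$ at \emph{all} the roots of $P_\omega(u,x)$ in the negative interval, is strictly increasing in $u$, and varies smoothly as roots merge and split --- which is precisely the convex family $\mathcal F_\omega(\g,f)$ and the stratum-by-stratum extension of sections that the paper constructs. In other words, the ``only delicate point'' you flag is the $\lambda$-calibration, but the real delicacy is buried in a step you treat as routine.

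The $\lambda$-balance in step (3) is also underspecified. You need simultaneously $\lambda\,\varepsilon\,\sup|\mathcal L_v\rho| < \delta/2$ and $\lambda\,\inf \mathcal L_v h > \sup|\mathcal L_v\tilde f|$, where $\varepsilon := \sup_{W'}|h-c|$. But $\varepsilon$ and $\sup|\mathcal L_v\rho|$ are coupled through the geometry of $W'\setminus W''$: shrinking $W'$ makes $\varepsilon$ small of order the diameter of $W'$, while the transition width of $\rho$ is at most that diameter, so $\sup|\mathcal L_v\rho|$ grows like its reciprocal; the product does not tend to $0$. Moreover, if the $h$-values on the different components of $B\cup X(v,A)$ spread over a fixed interval, then a single constant $c$ cannot make $\varepsilon$ small, and replacing $c$ by a function brings $\mathcal L_v c$ into the estimate. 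You would need a more careful choice of correction term --- for instance one that is constructed stratum by stratum, or one of the form $\tilde f + \Psi$ with $\Psi$ built from a flow-invariant decomposition --- and at that point you are, again, recreating the paper's inductive machinery.
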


\begin{proof}The argument is an induction by the increasing combinatorial types $\om \in \mathbf\Om^{\bullet}$ of the $v$-trajectories that pass trough the points of the set $B \setminus A$. With $A$ being fixed, we intend to increase gradually the locus $\tilde B \supset A$ to which the desired extension exists, until eventually $\tilde B$ will coincide with the given $B$. \smallskip

In the proof, we put $X(A) := X(v, A)$ and $X(B) := X(v, B)$. Since $A$ and $B$ are closed in $\d_1X$, both sets $X(A)$ and $X(B)$ are compact. 
\smallskip
 
Thanks to the property of $v$ to be boundary generic, the set of combinatorial types $\om$ of the $v$-trajectories in $X$ is finite. So we may assume that, for some even $d$, all the elements $\om$ have the property $|\om| \leq d$. \smallskip

Consider all the trajectories through the points of $B \setminus A$ and their combinatorial types, which reside in the finite set $\Theta_B$.  
Among these types, we pick a \emph{minimal} element $\om$.  \smallskip

Denote by $X_\om$ the subset of $X(B)$ that is formed by the trajectories of this minimal  combinatorial type $\om$. Let $X^\d_\om =_{\mathsf{def}} X_\om \cap \d_1 X$. 
We denote by $\mathcal T_\om$ the $\Gamma$-image of $X_\om$ and by $\mathcal T(A)$ the $\Gamma$-image of $X(A)$.\smallskip

By the choice of minimal $\om$, the trajectories that are the limits of trajectories from $X_\om$, but are not contained in $X_\om$, have combinatorial types residing in the sub-poset $\om_{\succ_\bullet} \cap \Theta_B$ and thus are contained in $X(A)$.  
\smallskip


We are going to show that any given smooth function $$f : X^\d_\omega \cup X(A) \to \R,$$ with the properties as in the lemma, extends to smooth function $$F:  X_\omega \cup X(A)  \to \R$$ so that $\mathcal L_v(F) > 0$ on $X_\omega \cup X(A)$. Since replacing $f$ with $f + const$ produces an equivalent extension problem,  we may assume without lost of generality that $f > 0$ . \smallskip

First we notice that, for each trajectory $\g \subset  X_\om$, there is a smooth strictly monotone function $F_\g: \g \to \R$ that  takes the given increasing values of the discrete function $f|_\g: \g \cap X^\d_\om \to \R$. 
This interpolating construction is based on a standard monotone block-function $\varphi_{a,b}: [0, a] \to [0, b]$ that smoothly depends on the two non-negative parameters $a, b$. The infinite jet of $\varphi_{a,b}$ at $0$ coincides with the jet of the function $x$, the infinite jet of $\varphi_{a,b}$ at $a$ coincides with the jet of the function $x + b$, and $\frac{d}{dx}\varphi_{a,b}(x) > 0$ in the interval $[0, a]$. Fig. 5, 
diagram B, shows the four-points interpolation $\phi_{\vec x, \vec y}$ that uses three block-functions of the type $\varphi_{a,b}$.\smallskip

Since we have chosen $f > 0$, we get $F_\g > 0$ as well. 
\smallskip

Let $V(A)$ be an open regular neighborhood of $\mathcal T(A)$ in $\mathcal T(v)$. Put $U(A) := \Gamma^{-1}(V(A))$. 

Let $\tilde f_A: U(A) \to \R$ be a smooth extension of $f: X(A) \cup B \to \R$ into a neighborhood $U(A)$ of $X(A)$. We choose $V(A)$ so small that, by continuity,  $d\tilde f_A(v) > \delta > 0$ and $\tilde f_A > 0$ in $U(A)$. Also by the very construction of the extension $\tilde f_A$, its restriction to $U(A) \cap B$ coincides with $f$. \smallskip

The two sets $\mathcal T_\om$ and $V(A)$ form an open cover of the space $\mathcal T_\om \cup V(A)$. \smallskip

Let $W =_{\mathsf{def}} \mathcal T_\om \cap (V(A) \setminus \mathcal T(A))$ and $K =_{\mathsf{def}} \Gamma^{-1}(W)$. Then $\Gamma: K \to W$ is a $v$-oriented fibration with fibers being closed segments or singletons. So it is a trivial fibration. At the same time, $\Gamma: K \cap \d_1X \to W$ is a finite cover with the fiber of cardinality $\sup (\om) = |\om| - |\om|'$. The triviality of $\Gamma: K \to W$ implies that the holonomy of the covering map $\Gamma: K \cap \d_1X \to W$ is trivial and thus $K \cap \d_1X$ is a product. So $K \cap \d_1X$ is homeomorphic to $L \times C$, where $C$ is a set of cardinality $\sup (\om)$, $L \subset \d_1X \cap X_\om$, and $\Gamma: L \to W$ is a homeomorphism. 

By the construction of $K$, its boundary consists of two disjoint compacts: $\d'K$ that resides in $X(A)$ and $\d''K = X(\om) \cap \d U(A)$.  
\smallskip

We claim that, for any $\e > 0$, there exists a smooth function $\psi^\bullet_\e$ in the vicinity of $K$ in $\hat X$ such that: 1) $jet^\infty(\psi^\bullet_\e)|_{\d'K} = jet^\infty(\mathbf 1)|_{\d'K}$, 2) $jet^\infty(\psi^\bullet_\e)|_{\d''K} = jet^\infty(\mathbf 0)|_{\d''K}$, and 3) $|\mathcal L_v(\psi^\bullet_\e)| < \e$ in $K$.\footnote{We suspect that, in fact, there exists a smooth $\psi^\bullet$ that satisfies 1) and 2) and $\mathcal L_v(\psi^\bullet) = 0$ in $K$. Its existence would simplify the following arguments.} Indeed, by Whiney's version of the Urysohn lemma, there exists a smooth $\tilde\psi^\bullet: K \to [0, 1]$ such that 1) and 2) are satisfied. Since the $v$-flow gives $K$ a product structure $I \times L \subset \R \times L$, there exists a stretching diffeomorphism $\a: K \to \tilde K \approx \tilde I \times L$ (where $\tilde I \supset I$) along the $v$-directed component $I$ so that the $v$-directional derivative of $(\a^{-1})^\ast(\tilde\psi^\bullet)$, being restricted to $K \subset \tilde K$, is $\e$-small. So $\psi^\bullet_\e := (\a^{-1})^\ast(\tilde\psi^\bullet)|_K$ satisfies property 3) as well.
\smallskip

Let $\psi^A_\e: U(A) \to [0, 1]$ be the function that equals $\mathbf 1$ on $X(A)$, equals $\psi^\bullet_\e$ on $K$, and is $\mathbf 0$ on $X_\om \setminus (X_\om \cap U(A))$. It is smooth thanks to the properties $jet^\infty(\psi^\bullet_\e)|_{\d'K} = jet^\infty(\mathbf 1)|_{\d'K}$ and $jet^\infty(\psi^\bullet_\e)|_{\d''K} = jet^\infty(\mathbf 0)|_{\d''K}$. Let $\psi^\om_\e := \mathbf 1 - \psi^A_\e$. The pair $\{\psi^A_\e, \psi^\om_\e\}$ is a smooth partition of unity, subordinate to the open cover $$\{X_\om \setminus (\bar X_\om \cap X(A)),\; X(A) \cup K\}$$ of the compact space $X_\om \cup X(A)$. 
\smallskip 

Next, we form the smooth function $F_\om: X_\om \setminus (X_\om \cap X(A)) \to \R$ whose restriction $F_\g: \g \to \R$ to each $v$-trajectory $\g \subset X_\om$ is a monotone function, the canonical interpolation (see Fig. 5, 
B) of the given function $f|_\g: \g \cap X^\d_\om\to \R$. 

Consider the smooth function $\tilde F_\e: K \to \R$, defined by the formula $$\tilde F_\e := \psi_\e^\om \cdot F_\om + \psi_\e^A \cdot \tilde f_A.$$ It smoothly extends in the obvious way to a function $F_\e: X(A) \cup X_\om \to \R$ so that 1) $F_\e = F_\om$ on $X_\om \setminus (X_\om \cap U(A))$, 2) $F_\e = f$ on $X(A) \cup (X_\om \cap \d_1X)$.

By choosing an appropriate $\e$, we aim to insure that $d F_\e(v) > 0$ in $X(A) \cup X_\om$. Evidently $d F_\e(v) > 0$ in the complement to $K$. So we need to concentrate on $d F_\e(v)|_K$.  Let $\bar K$ be the closure of $K$ in $X$.

Put $m =_{\mathsf{def}} \min\{\min_{\bar K} dF_\om(v), \min_{\bar K} d\tilde f_A(v)\}$. By the properties of $F_\om$ and $\tilde f_A$, we have $m > 0$. Then by the product rule,
$$\mathcal L_v(\tilde F_\e) = d F_\e(v)\; \geq \; m + d\psi_\e^\om(v) \cdot F_\om + d\psi_\e^\A(v) \cdot \tilde f_A.$$ 
So it suffices to insure that RHS of this inequality is positive in order to guarantee that $\mathcal L_v(F_\e) > 0$ in $\bar K$. Since $d\psi_\e^\A(v) =-d\psi_\e^\om(v)$ on $\bar K$, the last inequality may be written as 
$$m + d\psi_\e^\om(v) (F_\om - \tilde f_A) > 0.$$  
Using that $\tilde f_A > 0$ and $F_\om > 0$, the choice $\e <  \inf_{\bar K}\big\{\frac{m}{|F_\om - \tilde f_A |}\big\}$ validates that $d F_\e(v) > 0$ in $X(A) \cup X_\om$. For such a choice of $\e >0$, the smooth function $F =_{\mathsf{def}} F_\e$ delivers the desired extension. 
\smallskip

Finally, we form the closed set  $A' =_{\mathsf{def}} A \cup X^\d_ \omega \subset \d_1X$ and apply the previous arguments to the new pair $B \supset A'$. This completes the inductive step $A \Rightarrow A \cup (X_\om \cap \d_1X)$. \hfill 
\end{proof}

By letting $A = \emptyset$ and $B = \d_1X$ in Lemma \ref{lem4.4}, we get an instant implication:

\begin{corollary}\label{cor4.2} Let $v$ be a traversing and boundary generic vector field on a compact smooth manifold $X$. Consider a smooth function $f: \d X \to \R$ such that $f(x) < f(x')$ for any two points $x \neq x'$ on the same trajectory, such that $x'$ can be reached from $x$ by moving in the direction of $v$. 

Then $f$ extends to a smooth function $F: X \to \R$ such that $\mathcal L_v(F) > 0$ on $X$. \hfill $\diamondsuit$
\end{corollary}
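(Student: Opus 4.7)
The plan is to apply Lemma \ref{lem4.4} directly with the specialization $A := \emptyset$ and $B := \d_1 X$, and then verify that, under these choices, the hypotheses and conclusion of the lemma translate exactly into the hypotheses and conclusion of the corollary.

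First I would check the input side. Taking $A = \emptyset$, the set $X(v, A)$ is empty, so the domain $B \cup X(v, A)$ of the function required by Lemma \ref{lem4.4} collapses to $B = \d_1 X$. Hence the given smooth $f: \d_1 X \to \R$ already \emph{is} the data needed by the lemma: it is a smooth function on the relevant closed subset, and the monotonicity hypothesis ``$f(x) < f(x')$ whenever $x'$ lies $v$-downstream from $x$ on the same trajectory'' is exactly the hypothesis assumed in the corollary. The fact that $A = \emptyset$ and $B = \d_1 X$ are closed subsets of $\d_1 X$ with $A \subset B$ is immediate.

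Next I would check the output side. Lemma \ref{lem4.4} produces a smooth extension $F: X(v, B) \to \R$ with $\mathcal{L}_v(F) > 0$. With $B = \d_1 X$, the set $X(v, B)$ is the union of all $v$-trajectories that meet $\d_1 X$; but since $v$ is traversing, \emph{every} $v$-trajectory has both endpoints on $\d_1 X$ (or is a singleton in $\d_1 X$), so $X(v, \d_1 X) = X$. Thus $F$ is defined on all of $X$, smooth, restricts to $f$ on $\d_1 X$, and satisfies $\mathcal{L}_v(F) > 0$ everywhere on $X$ --- which is precisely the conclusion of the corollary.

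There is essentially no obstacle here beyond bookkeeping: the content of the corollary is already absorbed into the inductive argument of Lemma \ref{lem4.4}, and the specialization $A = \emptyset$, $B = \d_1 X$ is the ``base'' situation, where the induction over combinatorial types $\omega \in \Omega^{\bullet}$ (the trajectory types occurring in $X(v, \d_1 X) = X$) must be run \emph{ab initio} rather than extending a preexisting partial solution. Consequently the only thing worth remarking in the write-up is the identification $X(v, \d_1 X) = X$, which is what guarantees the extension lives on the whole manifold $X$ and not merely on some saturated subset.
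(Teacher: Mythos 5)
Your argument is exactly the paper's: the corollary is stated there as the immediate specialization of Lemma \ref{lem4.4} to $A=\emptyset$, $B=\d_1X$, and the only nontrivial identification you make --- that $X(v,\d_1X)=X$ because $v$ is traversing --- is correct. Nothing further is needed.
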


If the following conjecture (linked to Question \ref{q4.1}) is true, it would strengthen Theorem \ref{th3.HOLO} below.

\begin{conjecture}\label{conj3.?} 
Let $\d_1X \subset \R \times \R^n$ be a smooth hypersurface, given by a polynomial equation 
$$P(u, \vec x) =_{\mathsf{def}} u^d + \sum_{i=0}^{d-1} x_i u^i = 0$$
of an even degree $d$, and $X$ be the domain, given by  the polynomial inequality $\{P(u, \vec x) \leq 0\}$. 

We denote by $\g(\vec x) \subset \R \times \{\vec x \}$ a segment/singleton with the following two properties:
\begin{itemize}
\item $P(u, \vec x)|_{\g(\vec x)} \leq 0$, and 
\item no larger segment  $\tilde\g(\vec x) \supset \g(\vec x)$ has the property $P(u, \vec x)|_{\tilde\g(\vec x)} \leq 0$
\end{itemize}  

Consider a smooth diffeomorphism $\phi: \d_1X \to \d_1X$ which maps each set $\g(\vec x) \cap \d_1X$ to a similar set $\g'(\vec x') \cap \d_1X$, while preserving the multiplicity of the $P$-roots and their order in the two sets.  
 
 Let $F: X \to \R$ be a smooth function such that $\frac{\d F}{\d u} = 0$ in $X$. We denote by $f$ the restriction of $F$ to $\d_1X$.\smallskip
 
 Then the function $\phi^\ast(f)$ extends to a \emph{smooth} function $G: X \to \R$ such that $\frac{\d G}{\d u} = 0$ in $X$.
\hfill $\diamondsuit$
\end{conjecture}

Here is a special case of Conjecture \ref{conj3.?} that we can validate. 
It is the case of a boundary generic vector field in the vicinity of $\d_2^-X(v)$. 

Let $Q$ denote the hypersurface $\{u^2 + x_0 = 0\}$ in $\R^1 \times \R^1 \times \R^{n-1}$.  The functions $u: Q \to \R$ and $\vec y: Q \to \R^{n-1}$ are smooth coordinates on $Q$. 
Let $X \subset \R^1 \times \R^1 \times \R^{n-1}$ be the domain defined by $\{u^2 + x_0 \geq 0\}$.

The causality map $\a =_{\mathsf{def}} C_{\d_u}$ takes each point $q = (u, x_0, \vec y) \in Q$ to the point $\a(q) = (-u, x_0, \vec y)$. 

We denote by $K \subset Q$  the locus $\{u = 0\}$  and by $\pi: \R \times \R^n \to \R^n$ the projection $(u, x_0, \vec y) \to (x_0, \vec y)$. 

\begin{lemma}\label{lem3.A} Let a function $f: Q \to \R$ be of the class $C^{2k}(Q, \R)$ and invariant under the involution $\a: Q \to Q$. Then there exists a function $g: (\R^n)_+ \to \R$ in the variables $(x_0, \vec y)$ such that:
\begin{itemize} 
\item  the restriction of $\pi^\ast(g)$ to $Q$ coincides with $f$,

\item $g \in C^{k}((\R^n)_+, \R)$.
\end{itemize}
\end{lemma}

\begin{proof} Put $x = x_0$. We denote by $|\vec w|$ the $l_1$-norm of the vector $\vec w$. 

Consider the Taylor expansion of $f(u, \vec y)$ at a point $a = (0, 0, \vec y) \in K$. By the Taylor formula, there exists a polynomial $T^{2k}_{f, a}(\D u, \D\vec y)$ of degree $\leq 2k$, an open neighborhood $U(f, a) \subset Q$ of the point $a \in Q$, and a positive constant $C = C(U(f, a))$ (depending on the estimates of the order $2k+1$ partial derivatives of $f$ in $U(f, a)$) such that
\begin{eqnarray}\label{eq3.AA}
| f(\D u, \vec y +  \D\vec y) - T^{2k}_{f, a}(\D u, \D\vec y) | < C (|\D u| +|\D\vec y|)^{2k+1}
\end{eqnarray}
for all  $(\D u, \vec y +  \D\vec y) \in U(f, a)$.

Since $f(\a(u, \vec y)) = f((u, \vec y))$, there exists a function $g: \R^n_+ \to \R$ such that $\pi^\ast(g)|_Q$ coincides with $f$: just put $g((\pi(u, \vec y)) =_{\mathsf{def}} f(u, \vec y)$.

Using that $f(\a(u, \vec y)) = f((u, \vec y))$ identically, $T^{2k}_{f, a}(\D u, \D\vec y)$ has terms of even degrees in $\D u$ only. 
We introduce the polynomial $\tilde T^{2k}_{f, a}((\D u)^2, \D\vec y)$ in the variables $(\D u)^2, \vec y$ by the formula $\tilde T^{2k}_{f, a}((\D u)^2, \D\vec y) =_{\mathsf{def}}  T^{2k}_{f, a}(\D u, \D\vec y)$. Then we represent $\tilde T^{2k}_{f, a}(\D x, \D\vec y)$ as a sum of a polynomial  $\tilde T^{k}_{f, a}(\D x, \D\vec y)$ of degree  $\leq k$ in the variables $x, \vec y$ and a polynomial $\tilde R^{ > k}_{f, a}(\D x, \D\vec y)$, comprised of monomials whose degrees exceed $k$.   


Then there exists a positive constant $C'$ such that $$|\tilde R^{ > k}_{f, a}(\D x, \D\vec y)| <  C' (|\D x| +|\D\vec y|)^{k+1}$$ for all 
$(\D x, \vec y+ \D\vec y)$ in a sufficiently small open neighborhood $V(f, a) \subset \R^n$ of $\pi(a)$.

Therefore, in some open neighborhood $W(f, a) \subset \R^n$ of $\pi(a)$,  the inequality (\ref{eq3.AA}) can be rewritten as 
\begin{eqnarray}\label{eq3.BB}
\big | g(\D x, \vec y +  \D\vec y) - \tilde T^{k}_{f, a}(\D x) - \tilde T^{> k}_{f, a}(\D x)\big | \nonumber \\ 
\leq \big | g(\D x, \vec y +  \D\vec y) - \tilde T^{k}_{f, a}(\D x)\big | +  \big |\tilde T^{> k}_{f, a}(\D x)\big | \nonumber \\ 
< C (\sqrt{|\D x|} +|\D\vec y|)^{2k+1} + C' (|\D x| +|\D\vec y|)^{k+1}.
\end{eqnarray}

 Note that the positive function $$\psi(|\D x|, |\D\vec y|) =_{\mathsf{def}} (\sqrt{|\D x|} +|\D\vec y|)^{2k+1} / (|\D x| +|\D\vec y|)^{k+1}$$ is bounded from above in an open neighborhood $\mathcal U = \mathcal U(k)$ of $(0, 0)$ in the plane.

Hence, in the vicinity of $\pi(a) = (0, \vec y)$,  the inequality (\ref{eq3.BB}) transforms into the desired Taylor inequality
$$\big | g(\D x, \vec y +  \D\vec y) - \tilde T^{k}_{f, a}(\D x) \big | < \tilde C (|\D x| +|\D\vec y|)^{k+1},$$
where the constant $\tilde C =_{\mathsf{def}} C \cdot \sup_{\mathcal U} \psi(|\D x|, |\D\vec y|) + C' > 0$. 
Therefore $g \in C^{k}(\R^n_+, \R)$.
\hfill 
\end{proof}

\begin{definition}\label{A-property}{\bf (Property $\mathsf A$)} Let $v$ be a traversing boundary generic vector field on a compact connected smooth manifold with boundary.

We say that $v$ {\sf has property} $\mathsf A$ if each $v$-trajectory is transversal to $\d_1 X$ at \emph{some} point, or has the combinatorial type $\om = (2)$.  \smallskip

This property $\mathsf A$ is equivalent to the requirement
\begin{eqnarray}\label{eq_propertyA} 
X\big(v, (33)_\succeq \cup (4)_\succeq\big) = \emptyset.
\end{eqnarray}

In particular, if each connected component of $\d_1X$ is \emph{concave} or \emph{convex} with respect to the $v$-flow, then property $\mathsf A$ is satisfied. Equivalently, if $\d_3X(v) = \emptyset$, then all the combinatorial types of $v$-trajectories are of the form $(11), (1\underbrace{2 \ldots 2}_{k}1), (2)$, so property $\mathsf A$ is valid. 

\hfill $\diamondsuit$
\end{definition}

In particular, $\mathsf A$ is valid for any gradient vector field of a Morse function $f$ on a closed manifold $M$, being restricted to the compliment $X$ to a disjoint union of sufficiently small \emph{convex} balls, centered on the $f$-critical points.\smallskip

Now, we are in position to prove the main result of this paper, dealing with {\sf the topological rigidity} of boundary value problems for a rather general class of ODEs.

\begin{theorem}\label{th3.HOLO}{\bf (The Holography Theorem)} \hfill\break
Let $X_1, X_2$ be two smooth compact connected $(n +1)$-manifolds with boundary, equipped with traversing and boundary  generic fields  $v_1, v_2$, respectively. 
\begin{itemize}
\item Then any smooth diffeomorphism  $\Phi^\d: \d_1X_1 \to \d_1X_2$ such that $$\Phi^\d \circ C_{v_1} =  C_{v_2} \circ \Phi^\d$$ extends to a homeomorphism $\Phi: X_1 \to X_2$ which maps $v_1$-trajectories to $v_2$-trajectories so that the field-induced orientations of trajectories are preserved. The restriction of $\Phi$ to each trajectory is a smooth diffeomorphism.\smallskip

\item If $v_1$ has the property $\mathsf A$ from Definition \ref{A-property}, then the homeomorphism $\Phi$ is a smooth diffeomorphism. In particular, this is the case for any concave vector field $v_1$.\smallskip

\item In general, the conjugating homeomorphism $\Phi: X_1 \to X_2$ is a smooth diffeomorphism outside the closed subsets  $$X_1\big(v_1, (33)_\succeq \cup (4)_\succeq\big) \subset X_1\;\text{ and } \; X_2\big(v_2, (33)_\succeq\cup (4)_\succeq\big) \subset X_2.$$  
If the fields are traversally generic, then the set $X_i(v_i, (33)_\succeq )$ is of codimension $4$. The set $X_i(v_i, (4)_\succeq )$ is of codimension $3$.
\end{itemize}
\end{theorem}

\begin{proof} We divide the proof into three steps.\smallskip

{\bf (1)} First, using that  $\Phi^\d: \d X_1 \to \d X_2$ is a homeomorphism that commutes with the causality  maps $C_{v_i}$,  we see that $\Phi^\d$ gives rise to a well-defined homeomorphism $\Phi^\mathcal T: \mathcal T(v_1) \to \mathcal T(v_2)$ of the trajectory spaces. We claim that $\Phi^\mathcal T$ is a homeomorphism of $\mathbf\Omega^\bullet$-stratified spaces. \smallskip

When the arguments apply to both $v_1$ and $v_2$, in order to simplify the notations, we put $v =_{\mathsf{def}} v_i$ and $X =_{\mathsf{def}} X_i$, where $i =1, 2$.\smallskip

Let $\Gamma: X \to \mathcal T(v)$ be the obvious surjective map. Since $v$ is traversing field, any trajectory reaches the boundary; so the obvious map $\Gamma^\d: \d_1 X \to \mathcal T(v)$ is onto as well. 

Evidently, the fiber of $\Gamma^\d$ consists of the maximal chain of points $x_1 \leadsto x_2 \leadsto \dots  \leadsto x_q$ from  $\d_1 X$ such that $C_{v}(x_j) = x_{j+1}$ for all $j \in [1, q -1]$. By the definition of $C_{v}$, such a chain is exactly the ordered finite locus $\g_{x_1} \cap \d_1 X$. \smallskip

We claim that the combinatorial type $\omega = \omega(\g) \in \mathbf\Omega^\bullet$ of each $v$-trajectory $\g \subset X$ can be recovered from the causality  map $C_{v}: \d_1^+X \to \d_1^-X$ in the vicinity of $\g \cap \d_1 X$. 

For each point $y \in \d_1X$, its multiplicity $m(y) $ with respect to a boundary generic flow $v$ can be detected by the unique pure stratum $\d_j X^\circ := \d_j X^\circ(v)$, $j = m(y)$, to which $y$ belongs. On the other hand, it  can be also detected in terms of the causality  map $C_v$ and its iterations, restricted to the vicinity of $y$. 

Let us justify this observation. Recall that, for boundary generic vector fields, Lemma \ref{local_boundary_generic}  provides us with a model for the divisors $\{D_{\hat\g}\}_{\hat\g}$, localized to a sufficiently small  neighborhood  $U_y$ of $y$ (the set $\hat\g \cap \d X \cap U_y$ is the support of $D_{\hat\g}|_{U_y}$). We choose the $\hat v$-flow adjusted neighborhood $U_y$ with some care: first we chose a small smooth transversal section $S \subset \hat X$ of the $\hat v$-flow, which contains $y$, then we consider the union $V_y$ of $\hat v$-trajectories through the points of $S$, and finally we let $U_y = V_y \cap X$. 

In the $\hat v$-flow adjusted coordinates $(u, x)$, $\d_1 X$ is given by an equation $\{F(u, x) = 0\}$, where a smooth function $F$ has $0$ for its regular value. Then the $\hat v$-trajectory $\hat\g$ is given by the equation $\{x = 0\}$ and the $v$-trajectory $\g$  by $\{F(u, x) \leq 0,\; x = 0\}$. \smallskip

Since $v$ is boundary generic, each  point $y = (u_\star, 0) \in \g \cap \d X$ has multiplicity $m(y) \leq \dim(X)$. So $F(u, 0)$ has a zero at $u_\star$ of multiplicity $m(y) \leq \dim(X)$. By the Taylor formula, this implies that any smooth function $g(u)$ that is $C^\infty$-close to $F(u, 0)$ has finitely many zeros of finite multiplicities, which are localized to the vicinity of the zero set $\{F(u, 0) = 0\}$. Moreover, in the vicinity of $u_\star$, $g(u) = P(u)\cdot Q(u)$, where $P(u)$ is a real polynomial of degree $m(y)$ and $Q(u) > 0$. Therefore any such function $g(u)$ is of the form $\tilde P(u) \cdot \tilde Q(u)$, where $\tilde P(u)$ is a real polynomial of the degree $|\omega| =  \sum_{y \in \g \cap \d X} m(y)$ and $\tilde Q > 0$. Thus, for any trajectory $\g' = \{x = x'\}$ in the vicinity of $\g$ (for any $x'$ sufficiently close to $0$), the intersection $\g' \cap \d_1 X$ is given by the equation, $\{\g_{x'}(u) =_{\mathsf{def}}F(u, x') = 0\}$, and the zero divisor $D_{\g'}$, associated with $\g'$, coincides with the zero divisor $D_\R(\tilde P)$ of a real polynomial $\tilde P$ of degree $|\omega|$ (note that $\deg(D_\R(\tilde P)) \equiv |\omega| \mod 2$).
\smallskip 

Using these local models, the maximal length of a chain 
$$z_1\leadsto  z_2 =_{\mathsf{def}} C_v(z_1) \leadsto z_3 =_{\mathsf{def}} C_v(z_2) \leadsto \dots $$
in any sufficiently small $v$-adjusted neighborhood $U_y \subset \d X$ of $y$ is $\lceil m(y)/2\rceil$, where $\lceil \sim \rceil$ denotes the integral part of a positive number. 
Indeed, if $m(y)$ is even, then the maximal number of roots of \emph{even} multiplicity for a polynomial of degree $m(y)$ is $m(y)/2$, and by Lemma 3.1 \cite{K2}, such $u$-polynomials $g_{x'}(u)$ of the form $\prod_{i =1}^{m(y)/2} (u - u_\star - \e_i)^2$, where all $\{\e_i\}_i$ are distinct, are present in an arbitrary small neighborhood of the polynomial $(u - u_\star)^{m(y)}$ in the coefficient space. 

When $m(y)$ is odd, then the maximal length of a chain $z_1 \leadsto z_2 \leadsto \dots $ in the vicinity of $y$ in $\d_1 X$ is $(m(y) - 1)/2 = \lceil m(y)/2\rceil$. It corresponds either to the $m(y)$-polynomials with one simple root, followed by the maximal number of multiplicity $2$ roots,  or to the $m(y)$-polynomials with the maximal number of multiplicity $2$ roots, followed by a simple root.
 \smallskip

Evidently, the order in which the points $\g \cap \d_1 X$ appear along each trajectory $\g$ is also determined by $C_v$. So the combinatorial type $\omega(\g) \in \mathbf\Omega^\bullet$ of each $v$-trajectory $\g \subset X$ can be recovered from the causality  map $C_v: \d_1^+X \to \d_1^-X$ and its partially-defined iterations. As a result, the information encoded  in $C_v$ is  sufficient for reconstructing the  $\mathbf \Omega^\bullet$-stratified space $\mathcal T(v)$, the image of a finitely ramified map $\Gamma^\d: \d_1 X \to \mathcal T(v)$. \smallskip

Recall that, for \emph{traversally generic} vector fields $v$, the combinatorial type $\omega$ of any trajectory $\g$ determines the $\mathbf\Omega^\bullet$-stratified topology of the germ of $\mathcal T(v)$ at $\g$ (\cite{K3}, Theorem 5.2); in contrast, for just traversing and boundary generic $v$, this determination by $\omega$ alone fails miserably.
\smallskip

So the diffeomorphism $\Phi^\d: \d_1 X_1 \to \d_1 X_2$, which commutes with the causality  maps $C_{v_1}$ and $C_{v_2}$, must take any chain of points 
$$z_1\leadsto  z_2 = C_{v_1}(z_1) \leadsto z_3 = C_{v_1}(z_2) \leadsto \dots$$ in $\d_1 X_1$ to a similar chain in $\d_1 X_2$ with the same multiplicity pattern. \smallskip
Therefore, any smooth diffeomorphism $\Phi^\d$, which commutes with the causality maps, gives rise to a homeomorphism $\Phi^{\mathcal T}: \mathcal T(v_1) \to \mathcal T(v_2)$ which preserves the $\mathbf\Omega^\bullet$-stratifications of the two spaces. Recall that the topology in $\mathcal T(v_i)$ is defined to be the weakest topology for which the obvious map $X_i \to \mathcal T(v_i)$ is continuous. 

Since the stratifications $\{\d_jX_i(v_i)\}_j$  can be recovered from the causality  maps $C_{v_i}$, we get $\Phi^\d(\d_jX_1(v_1)) = \d_jX_2(v_2)$ for all $j > 0$. 
\smallskip

{\bf(2)} Our next goal is to lift  $\Phi^{\mathcal T}$ to a desired homeomorphism (diffeomorphism) $\Phi: X_1 \to X_2$. 

Since $v_2$ is traversing vector field, by Lemma 4.1 from \cite{K1} (or Lemma 5.6 from \cite{K7}), there exists a smooth Lyapunov function $f_2: X_2 \to \R$ such that $\mathcal L_{v_2}(f_2) > 0$ everywhere in $X_2$. 
We use $f_2$ to form an auxiliary function $$f_1^\d =_{\mathsf{def}} (\Phi^\d)^\ast(f_2) : \d_1 X_1 \to \R.$$ 

Since $\Phi^\d$ commutes with the causality maps $C_{v_1}$ and $C_{v_2}$, we conclude that $\Phi^\d$ maps each $v_1$-ordered finite set $\g \cap \d_1X_1$ to $v_2$-ordered set $\Phi^{\mathcal T}(\g) \cap \d_1X_2$.  Therefore if $f_2(x) < f_2(y)$ for some 
$x, y \in \Phi^{\mathcal T}(\g) \cap \d_1X_2$, then $f_1^\d((\Phi^\d)^{-1}(x)) < f_1^\d((\Phi^\d)^{-1}(y))$. As a result, $f_1^\d: \d_1X_1 \to \R$ satisfies the hypothesis of Corollary \ref{cor4.2}. Applying this corollary, we produce a smooth function $f_1: X_1 \to \R$ which extends $f_1^\d$ and has the property $df_1(v_1) > 0$ everywhere in $X_1$.
\smallskip 

With the Lyapunov function $f_1 : X_1 \to \R$ for $v_1$ in place, we are ready to define the homeomorphism $\Phi: X_1 \to X_2$ that extends $\Phi^\d$.  It  takes a typical $v_1$-trajectory $\g \subset X_1$ to the $v_2$-trajectory $\g' \subset X_2$ that projects, with the help of $\Gamma_2$, to the point $\Phi^{\mathcal T}(\g) \in \mathcal T(v_2)$. The restriction of $\Phi$ to each trajectory $\g$  is given by the formula 
\begin{eqnarray}\label{eq12}
\phi_\g^{12}(x) = _{\mathsf{def}} \big(f_2|_{\Gamma_2^{-1}(\Phi^{\mathcal T}(\g))}\big)^{-1} \circ (f_1|_\g).
\end{eqnarray} 
(\ref{eq12}) makes sense since, thanks to the property $f_1^\d = (\Phi^\d)^\ast(f_2^\d)$, the ranges of $f_1: \g \to \R$ and $f_2: \Gamma_2^{-1}\big(\Phi^{\mathcal T}(\g)\big) \to \R$ coincide and the two functions deliver diffeomorphisms between their domains and ranges. In (\ref{eq12}), as usual, we abuse notations: ``$\g$" stands for both a $v$-trajectory in $X$ and for the corresponding point $[\g] := \Gamma(\g)$ in the trajectory space $\mathcal T(v)$. \smallskip

Now we introduce the desired $1$-to-$1$ continuous map $\Phi: X_1 \to X_2$ by the formula $\Phi(x) =_{\mathsf{def}} x'$, where $x'$ belongs to the $v_2$-trajectory over the point $\Phi^{\mathcal T}(\g_x) \in \mathcal T(v_2)$ such that $\phi_{\g_x}^{12}(x) = x'$. By the very construction of the function $f_1: X_1 \to \R$, we get $\Phi |_{\d X_1} = \Phi^\d$. \smallskip

Evidently, $\Phi$ is a homeomorphism since $\Phi^{\mathcal T}$ is a homeomorphism, distinct $v_1$-trajectories are mapped to distinct $v_2$-trajectories, and the restriction of $\Phi$ to each $v_1$-trajectory is a homeomorphism. Moreover, the restriction of $\Phi$ to each trajectory is a smooth orientation preserving diffeomorphism. Similarly, $\Phi^{-1}$ has these properties as well.\smallskip

\begin{figure}[ht]
\centerline{\includegraphics[height=2.3in,width=3.5in]{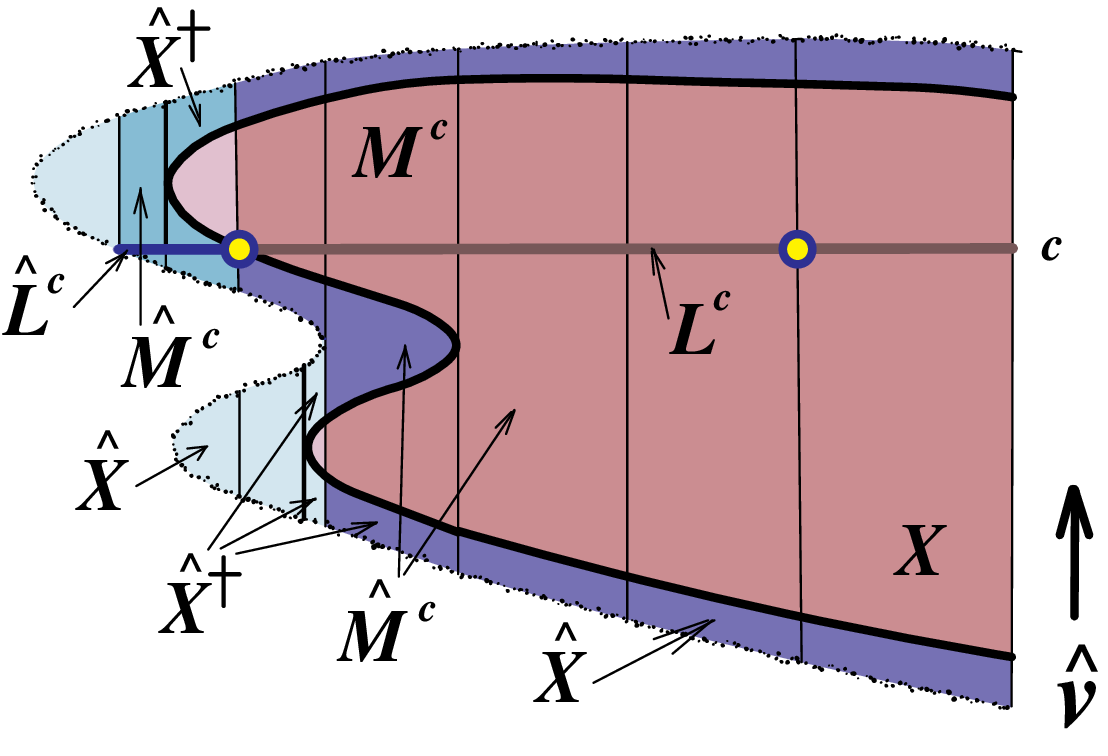}}
\bigskip
\caption{\small{Transversal foliations $\mathcal F(\hat v), \mathcal G(\hat f)$ in $\hat X$,  $\mathcal F(v), \mathcal G(f)$ in $X$, and various loci $\hat L^c, L^c, \hat M^c, M^c$, $\hat X^\dagger$, relevant to the arguments below.}} 
\end{figure}
\smallskip
 
{\bf(3)} In fact, thanks to the smooth dependence of solutions of a non-singular ODE on its initial values, in the cases described by the property $\mathsf A$ (perhaps, always, if Conjecture \ref{conj3.?} is true), $\Phi$ is a diffeomorphism. 

To validate this claim, as usually, we embed $X_i$ properly in a larger open manifold $\hat X_i$ and extend $v_i$ to vector field $\hat v_i$ on $\hat X_i$ so that $d\hat f_i (\hat v_i) > 0$ for an appropriate smooth function $\hat f_i: \hat X_i \to \R$ which extends $f_i$.  We denote by $\mathcal F(\hat v_i)$ the corresponding smooth oriented $1$-dimensional foliation on $\hat X_i$. It is transversal to the smooth $n$-dimensional foliation $\mathcal G(\hat f_i)$, defined by the constant level hypersurfaces $\{\hat f_i^{-1}(c)\}_{c \in \R}$. Let $\hat L^c_i =_{\mathsf{def}} \hat f_i^{-1}(c) \subset \hat X_i$ denote a typical smooth leaf of $\mathcal G(\hat f_i)$. Note that when $c$ is a critical value of $f_i|_{\d_1X_i}$, the locus $f_i^{-1}(c)$ my not be a smooth hypersurface in $X_i$.

The open sets $\{\hat M^c_i =_{\mathsf{def}} \bigcup_{x\in \hat L^c}\hat\g_x\}_{c \in \R}$ cover $X_i$ and thus $\{\d \hat M^c_i =_{\mathsf{def}}\hat M^c_i \cap \d_1 X_i\}_{c \in \R}$ is an open cover of $\d_1 X_i$. Put $M^c_i =_{\mathsf{def}} \bigcup_{x\in L^c}\hat\g_x$.

Finally, we introduce the set $\hat X^\dagger_i$ as the union of all $\hat v_i$-trajectories through $\d_1 X_i$. So $\hat X^\dagger_i$ is a closed subset of $\hat X_i$ and contains $X_i$. Fig. 6 shows the relevant loci.\smallskip

By a construction, similar to the one of $\Phi$, the diffeomorphism  $\Phi^\d$ extends to a homeomorphism $\Phi^\dagger: \hat X_1^\dagger \to \hat X_2^\dagger$. Indeed, each trajectory $\hat\g \subset \hat X_i^\dagger$ is determined by a point $z \in \d X_i$. Let $\hat\g =_{\mathsf{def}} \hat\g_z$. If a leaf $\hat L^c_i$ hits $\hat\g_z$, then the intersection $\hat L^c_i \cap \hat\g_z$ is a singleton. So we may define $\Phi^\dagger$ by the formula $\Phi^\dagger(x) =_{\mathsf{def}} \hat\g_{\Phi^\d(z)} \cap \hat L^c_2$, where $x \in \hat X^\dagger_1$, $c = \hat f_1(x)$, and $z \in \hat\g_x \cap \d_1 X_1$. Since $\Phi^\d$ conjugates the two causality map, this definition does not depend on the choice of $z \in \hat\g_x \cap \d_1 X_1$.

If $x \in \hat X_1$ is such that there exists $z \in \hat\g_x \cap \d_1 X_1$ with the multiplicity $m(z)$ of tangency between $\hat\g_z$ and $\d X_1$ being \emph{odd}, then using the local models of boundary generic fields from Lemma \ref{traversally_generic_A} and formula (\ref{eqVERSAL}), we see that any $\hat v_1$-trajectory in the vicinity of $z$ hits $\d_1 X_1$ (since any real polynomial of an odd degree has a real root). Therefore, in the vicinity of such $x$, the homeomorphism $\Phi^\dagger$ extends further to a homeomorhism $\hat \Phi: \hat X_1 \to \hat X_2$. Since each $v_1$-trajectory, but a singleton, is bounded by two points of an \emph{odd} multiplicity, the only exceptions are the cases when $\hat\g_x \cap \d X_1$ is a singleton of an \emph{even} multiplicity $m(x)$; in the vicinity of such $x$, $\hat X_1$ and $X_1^\dagger$ differ. For these $x$'s, we need an additional reasoning for the existence of an extension of $\Phi^\dagger$ to a germ-homeomorphism $\hat \Phi: \hat X_1 \to \hat X_2$ that maps $\hat v_1$-trajectories to $\hat v_2$-trajectories. It is also based on the local models of boundary generic vector fields from Lemma~\ref{traversally_generic_A}. In fact, Lemma \ref{lem3.A}  provides this reasoning for the points $z \in \d_2^-X_1(v_1) \setminus \d_3X_1(v_1)$, where the field $v_1$ is strictly convex.
\smallskip

By the construction of $\hat\Phi$, we get: (i)  $\hat\Phi^\ast(\hat f_2) = \hat f_1$, and (ii) $\hat\Phi(\hat\g)$ is a leaf of $\mathcal F(\hat v_2)$ for any $\hat v_1$-trajectory $\hat\g$. Thus  $\hat\Phi(\hat L^c_1) = \hat L^c_2$ and $\hat \Phi(\hat M^c_1) = \hat M^c_2$ for any $c \in \R$. \smallskip  

Given two smooth manifolds $Y_1$ and $Y_2$, a map $\Psi: Y_1 \to Y_2$ is smooth if and only if its composition with each local coordinate in $Y_2$ is a smooth function in the local coordinates on $Y_1$.  

The leaves of the smooth foliations $\mathcal F(\hat v_i)$ and $\mathcal G(\hat f_i)$ can be locally defined by freezing complementary groups of the appropriate smooth local coordinates in $\hat X_i$. Recall that $\hat\Phi$ maps the smooth foliation $\mathcal F(\hat v_1)$ to the smooth foliation $\mathcal F(\hat v_2)$, the restriction of $\hat\Phi$ to the the leaves-trajectories being a smooth diffeomorphism.  Since $\hat\Phi$ also maps the smooth foliation $\mathcal G(\hat v_1)$ to the smooth foliation $\mathcal G(\hat v_2)$, if the restrictions $\{\hat\Phi: \hat L_1^c \to \hat L_2^c\}_{c \in \R}$ of $\hat\Phi$ to the leaves of $\mathcal G(\hat v_1)$ are smooth maps, we may conclude that the homeomorphism $\Phi: X_1 \to X_2$ is a smooth map.\smallskip 
 
Since $\Phi^\d$ is a smooth diffeomorphism, the image $\Phi^\d(z) \in \d_1 X_2$ depends smoothly on $z \in \d_1X_1$.  Therefore, the image point $\Phi(x) \in X_2$ depends smoothly on a point $z \in  \g_x \cap \d M^c_1$, where $c = f_1(x)$ (as long as $\g_z \cap L^c_1 \neq \emptyset$).

A priori, this does not imply that $\Phi(x)$ depends smoothly on $x$! For this assertion to be valid, it would be sufficient to validate Conjecture \ref{conj3.?}. 
\smallskip

However, as we will see now, when the property $\mathsf A$ is available, we can overcome this difficulty. When the $\hat v_1$-trajectory $\hat\g$ through a point $x \in X_1$ is transversal to $\d_1 X_1$ at \emph{some} point $z \in \d_1 X_1$, then, in the vicinity of $x$, the $\hat v_1$-induced map $p_1^\d: \d\hat M^c_1 \to \hat L^c_1$, $c = f_1(x)$, admits a \emph{smooth} local section $\s_1: \hat L^c_1 \to \d \hat M^c_1$ which is transversal to the fibers of $p_1: \hat M^c_1 \to \hat f_1^{-1}(c)$. That section is delivered by the boundary $\d_1 X_1$ in the vicinity of $z$. In such a case, $\Phi$ is smooth in the vicinity of $x$, since the composition $p_2^\d \circ \Phi^\d \circ \s_1: \hat L^c_1 \to \hat L^c_2$ is a smooth map. This conclusion applies to all $v_1$-trajectories $\g$ that are bounded by at least one point of multiplicity $1$. The exceptions are the trajectories bounded by \emph{two} points of odd multiplicities that exceed $1$, that is, by the trajectories whose combinatorial type belongs to the poset  $(33)_\succeq \subset \mathbf \Omega^\bullet$. Other exceptions to the transversality case may occur for the trajectories whose combinatorial types belong to the poset $(4)_\succeq \subset \mathbf \Omega^\bullet$. They include all the combinatorial types $(2k)$, where $k \geq 4$. \smallskip

In the special case of trajectories of the combinatorial type $(2) \in \mathbf \Omega^\bullet$, the local differentiability of $\Phi$ in the vicinity of $z \in \d_2^-X_1(v_1)\setminus \d_3X_1(v_1)$ follows from Lemma \ref{lem3.A}.  Indeed, in the special smooth coordinates $(u, x_0, \vec y)$, where $\vec y =(y_1, \dots , y_{n-1})$, in the vicinity of such point $z$, the boundary $\d_1 X_1$ is given by an equation $\{u^2 + x_0 = 0\}$, while $X_1$ by the inequality $\{u^2 + x_0 \geq 0\}$. Each $\hat v_1$-trajectory is specified by freezing the coordinates $(x_0, \vec y)$. The smooth hypersurfaces $\{\hat L_1^c\}$ are transversal to the $\hat v_1$-trajectories. Since $\hat\Phi^\d$  maps $\d_2X_1(v_1) \setminus \d_3X_1(v_1)$ to $\d_2X_2(v_2) \setminus \d_3X_2(v_2)$, a similar system of smooth coordinates is available in the vicinity of $\Phi^\d(z)$. We use the symbol `` ' "  to denote them. 

By the previous transversality argument, the homeomorphism $\Phi$ may fail to be a local diffeomorphism at the points of the locus $\d_2X_1(v_1)$; so we need to investigate whether $\Phi$ is differentiable in the vicinity of $\d^-_2X_1(v_1)$. 

The following  arguments are based on Lemma \ref{lem3.A} and uses its notations.
In the appropriate local coordinates $(u, x_0, \vec y)$ and $(u', x'_0, \vec y')$, the smooth diffeomorphism $\Phi^\d: Q \to Q'$ of two quadratic hypersurfaces maps the $p_1$-folding locus $K$ to the $p_2$-folding locus $K'$ and commutes with the two causality maps $\a: Q \to Q$ and $\a': Q' \to Q'$.

The local coordinate function $x'_0: X_2 \to \R$ pulls back to a smooth $\a'$-invariant function $p_2^\ast(x'_0): Q' \to \R$. Since $\Phi^\d$ is a smooth diffeomorphism, the further pull-back  $\phi = (\Phi^\d)^\ast(p_2^\ast(x'_0)): Q \to \R$ is a smooth function on $Q$. Because $\Phi^\d$ commutes with $\a$ and $\a'$, $\phi$ is $\a$-invariant.  
Therefore, by Lemma \ref{lem3.A},  $\phi$ is a restriction to $Q$ of a smooth $u$-independent function $\chi$ in the variables $x_0, \vec y$.

Similarly, using that $\Phi^\d$ commutes with $\a$ and $\a'$, we conclude that $\psi = (\Phi^\d \circ p_2)^\ast(\vec y'): Q \to \R^{n-1}$  is a smooth and $\a$-invariant map. Therefore $\psi$ is a restriction to $Q$ of a smooth map $\theta: X_1 \to \R^{n-1}$ that depends only on the coordinates $(x_0, \vec y)$. 

At the same time, $\hat\Phi^\ast(\hat f_2) = \hat f_1$. The functions $(\hat f_2, x'_0, \vec y')$ form a smooth local system of coordinates. By the arguments above, the pull-back under $\hat\Phi$ of these coordinates are smooth on $\hat X_1$. Therefore, $\Phi$ is a smooth homeomorphism in the vicinity of $K$. By the same token, exchanging the roles of $X_1$ and $X_2$, $\Phi^{-1}$ is smooth as well. 

This concludes the proof of Theorem \ref{th3.HOLO}.
\hfill 
\end{proof}

\noindent{\bf Remark 3.5.} Let $v$ be a traversing boundary generic vector field on $X$. Among other things, Theorem \ref{th3.HOLO}  claims that \emph{any} diffeomorphism of the boundary $\d_1 X$, which commutes with the (partially defined) causality map $C_v$, extends to a homeomorphism (when $v_1$ satisfies $\mathsf A$, to a smooth diffeomorphism) of $X$! \hfill $\diamondsuit$ 
\smallskip

\begin{corollary}\label{cor3.8} Let $X_1, X_2$ be two smooth compact connected $(n +1)$-manifolds with boundary, equipped with traversing and boundary  generic fields  $v_1, v_2$, respectively. Then any diffeomorphism  $\Phi^\d: \d X_1 \to \d X_2$ such that $$\Phi^\d \circ C_{v_1} =  C_{v_2} \circ \Phi^\d$$
generates a stratification-preserving homeomorphism $\Phi^\mathcal T: \mathcal T(v_1) \to \mathcal T(v_2)$ of the corresponding $\mathbf\Omega^{\bullet}$-stratified trajectory spaces. 

If $v_1$ has property $\mathsf A$ from Definition \ref{A-property}, then $\Phi^\mathcal T$ induces an isomorphism $$(\Phi^\mathcal T)^\ast : C^\infty(\mathcal T(v_2)) \to C^\infty(\mathcal T(v_2))$$ of the algebras of smooth functions on the two trajectory spaces---the two spaces are ``diffeomorphic". 
\end{corollary}

\begin{proof} By the proof of Theorem \ref{th3.HOLO}, there exists a diffeomorphism $\Phi : X_1 \to X_2$ which takes $v_1$-trajectories to $v_2$-trajectories and extends $\Phi^\d$, while preserving their combinatorial tangency patterns. Therefore, $\Phi$ maps every  smooth function $f: X_2 \to \R$  that is constant on each $v_2$-trajectory to a continuous function $f\circ \Phi: X_1 \to \R$  that is constant on each $v_1$-trajectory. When $X_1(v_1, (33)_\succeq \cup (4)_\succeq) = \emptyset$, then $\Phi$ is a smooth diffeomorphism; so $\Phi^\ast(f)$ is a smooth function also constant along the $v_1$-trajectories.

Similar argument applies to the inverse homeomorphism/ diffeomorphism $(\Phi)^{-1}: X_2 \to X_1$. 
\hfill 
\end{proof}
\smallskip

Theorem \ref{th3.HOLO} has another ``holographic" implication:

\begin{corollary}\label{cor3.9} For a boundary generic and traversing vector field $v$ on $X$, the topological type of the pair $(X, \mathcal F(v))$ can be recovered from each of the following structures on its boundary $\d_1X$:
\begin{enumerate} 
\item the causality  map $C_v: \d_1^+X(v) \to \d_1^-X(v)$,
\item the poset $(\mathcal C^\d(v), \succ)$ whose elements are the points of $\d_1X$,
\item the category $\mathbf{\mathcal Cat}^\d(v)$, determined by the poset $(\mathcal C^\d(v), \succ)$.
\end{enumerate}

\begin{itemize} 
\item When $v$ has property $\mathsf A$ from Definition \ref{A-property}, the above homeomorphism is a smooth diffeomorphism.\smallskip

\item As a result, all the topological invariants of $X$ (such as rational Pontryagin classes of $X$) can be recovered from each of the three previous structures on $\d_1 X$.  \smallskip
 
\item When $v$ has property $\mathsf A$, all the invariants of the smooth structure on $X$ (such as all the characteristic classes of the tangent bundle $\tau(X)$) can be recovered from each of the three previous structures on $\d_1 X$.  
\end{itemize}
\end{corollary}

\begin{proof} Consider two manifolds $X_1$ and $X_2$ which carry traversing boundary  generic vector fields $v_1$ and $v_2$. Assume that the two manifolds share a common boundary: $\d_1X_1= \d_1X_2$. If the two fields induce identical causality maps, then, according to Theorem \ref{th3.HOLO}, the diffeomorphism $\Phi^\d:= id_{\d_1 X}$, extends to a homeomorphism $\Phi: X_1 \to X_2$ so that the oriented foliation $\mathcal F(v_1)$ is mapped to the oriented foliation $\mathcal F(v_2)$, the homeomorphism $\Phi$ being a diffeomorphism on each leaf. 

The equivalence of the three structures in the statement of the corollary has been established in the discussion that has followed formula (\ref{eq4.1}). 

When  $v_1$ has property $\mathsf A$, the homeomorphism $\Phi$ may be assumed to be a smooth diffeomorphism. 
\hfill 
\end{proof}


\noindent{\bf Example 4.1.} The statement of Corollary \ref{cor3.9} is not obvious even for the nonsingular gradient flows on $2$-dimensional manifolds. Consider a compact surface $X$ with a connected boundary $\d_1 X \approx S^1$ and a traversally  generic field $v$ on $X$. Then $\d_1^+X$ is a disjoint union of $q$ arcs in $S^1$. The set $\d_1^-X$ is a disjoint union of equal number of arcs. 

The causality map $C_v:  \d_1^+X \to \d_1^-X$ can be represented by a graph $G_v \subset \d_1^+X \times \d_1^-X$, drawn in a set of $q \times q$ of black unitary squares of the $2q \times 2q$ checker board, the sums of indexes of each square in the $2q \times 2q$ table being odd. The graph $G_v$ has a finite number of discontinuity points with well-defined left and right limits for each arc of $G_v$. The interior of each arc of $G_v$ is smooth. 

According to Corollary \ref{cor3.9}, this graph $G_v$ ``knows" everything about the topology of $X$ and the dynamics of the un-parametrized $v$-flow on it, up to a diffeomorphism of $X$!  Even the claim about the topological type of $X$ has some subtlety: according to the Morse formula for vector fields \cite{Mo}, to calculate $\chi(X)$, and thus to determine the topological type of $X$, we need to know not only $\chi(\d_1^+X) = q$ (which we obviously do), but also the integer $\chi(\d_2^+X)$, which can be extracted by iterating the map $C_v$. This presumes that the polarity of each of the $2q$ points from $\d_2X$ can be recovered from $C_v$ or $G_v$. We leave to the reader to discover the recipe.  \hfill $\diamondsuit$  
\smallskip

\noindent{\bf Example 4.2.}  For a transversally generic $v$ on a smooth $4$-dimensional $X$, the locus $X(v, (33)_\succeq) = \emptyset$ for dimensional reasons. Since  $X(v, (4)_\succeq)$ is a finite set residing in $\d_1 X$, we conclude that all the Gauge invariants of compact smooth $4$-manifolds $X$ with boundary can be recovered from the causality map $C_v:  \d_1^+X \to \d_1^-X$. As a practical matter, this recovery must be very challenging...
\hfill $\diamondsuit$
\smallskip

The next theorem suggests that traversing vector fields and their causality maps give rise to \emph{a new representation} of smooth manifolds with the spherical boundary. 

\begin{theorem}\label{th3.4} For $n \geq 3$, any compact connected smooth $(n+1)$-dimensional manifold $X$ with the spherical boundary can be represented, up to a homeomorphism, by a semi-continuous map $C: D^n_+ \to D^n_-$ between a pair of $n$-balls. The $C$-fibers are finite of cardinality $n+1$ at most, and a generic fiber is of cardinality $1$. This map $C$ captures the topological type of $X$. 

For $n=3$, $C$ captures the smooth topological type of the $4$-manifold $X$.
\end{theorem}
\begin{proof} Consider any compact connected smooth manifold $X$ with a spherical boundary $\d_1X = S^n$. By Theorem 3.1 from \cite{K1} and Theorem 3.5 from \cite{K2}, there is an open set $\mathcal D(X)$ of traversally generic vector fields $v$, such that $\d_1^+X$ is diffeomorphic to a ball $D^n_+ \subset S^n$. Then $\d_1^-X$ is the complimentary ball $D^n_-$. According to Corollary \ref{cor3.9}, for any $v \in  \mathcal D(X)$, the topological type of the manifold $X$ is determined by the semi-continuous causality map  $C_v: D^n_+ \to D^n_-$ (equivalently, by its graph $\Gamma(C_v) \subset D^n_+ \times D^n_-$). 

For $n \leq 3$, the locus $X(v, (33)_\succeq) = \emptyset$ and $X(v, (4)_\succeq)$ is a finite set, residing in $\d_1 X$. So by Corollary \ref{cor3.9}, this map $C_v$ captures the smooth topological type of $X$. 
\hfill 
\end{proof}

Compare this description of $X$ as a map $C_v: D^n_+ \to D^n_-$ with the description of the trajectory space $\mathcal T(v)$, given by the Origami Theorem  Theorem 3.1 from \cite{K8}. For a specially designed traversally generic $v$, the Origami Theorem presents the trajectory space as the continuous image of a ball $D^n$, where $n = \dim(\d_1X)$.
\smallskip

\noindent {\bf Example 4.3}
Consider a liquid flow trough a given volume $X$ with a smooth boundary. We assume that the flow velocity $v$ does not vanish in $X$. We think about $\d X$ as the hypersurface, where a multitude of measuring devices are positioned. The basic assumption is that their presence and measuring activity does not alter the flow. 

Any particle which enters the volume is registered, and its next appearance at a point of $\d_1 X$ is registered as well.  According to the Holography Theorem \ref{th3.HOLO}, these data allow for a reconstruction of the bulk $X$ and of the un-parametrized dynamics of the flow in it, up to a homeomorphism (a diffeomorphism) of $X$ which is identity on its boundary. 
\hfill $\diamondsuit$  
\smallskip

Now consider any time-dependent vector field $u(t)$, $t \in \R$, on a $n$-dimensional manifold $Y$ \emph{without} boundary. Then $u(t)$ gives rise to a non-vanishing vector field $v =_{\mathsf{def}} (u(t), 1)$ on the manifold $Y \times \R$. Note that $v$ is a gradient-like field with respect to the function $T(y, t) = t$ on $Y \times \R$. We call a pair $(y, t)$ an {\sf event} since we think of $T$ as time, and of $Y$ as space.

Let $X \subset Y \times \R$ be a $0$-dimensional compact submanifold with a smooth boundary.  Since  $dT(v) = 1$, any $v$-trajectory $\g(t)$ that passes through a point of $X$ is contained in $X$ for a compact set of instances $t \in \R$.  

Assume that  $X \subset Y \times \R$ is such that $v$ is boundary generic with respect to the boundary $\d_1 X$. In view of Theorem 3.5 from \cite{K2}, this assumption can be satisfied by a small perturbation $\tilde v$ of $v$. In fact, such perturbation $\tilde v$ can be of the form $( \hat u(t, y), 1)$ since the property of a field to be boundary generic depends only on its direction, and not on its magnitude. \smallskip

Let us call $X$ the ``{\sf event manifold}" and its boundary $\d_1 X$ the ``{\sf event horizon}". Note that the event manifold is chosen as independent set of data, not directly related to the time-dependent dynamic system $u(t)$ on the manifold $Y$.  
We call the events in $X$ {\sf internal} and in $Y \times \R \setminus X$ {\sf external}.
\smallskip

Thus $u(t)$ defines the causality  map $C_v: \d_1^+X(v) \to \d_1^-X(v)$ which takes  each  ``entrance" point $x_0 = (y_0, t_0)$ on the event horizon $\d_1 X$ to the closest along the $v$-trajectory trough $x_0$  ``exit" point $x_1 = (y_1, t_1)$ on $\d_1 X$. 

We can think of the event $x_0$ as the {\sf cause} of the event  $x_1$, so that $C_v$ indeed becomes the {\sf causality map} or the {\sf causality relation} on the horizon $\d_1 X$.  
\smallskip

The Holography Theorem \ref{th3.HOLO} and Corollary \ref{cor3.9} have the following important interpretation which applies to time-dependent vector fields: 

\begin{theorem}\label{th3.6}{\bf (The Causal Holography Principle)}\hfill\break 
Let $u(t)$, $t \in \R$, be a time-dependent smooth vector field on a $n$-dimensional smooth manifold $Y$ without boundary. 

For any compact $(n+1)$-dimensional smooth event manifold $X \subset Y \times \R$ such that the field $v = (u, 1)$ is boundary generic with respect to  $\d_1 X$, the causality relation on the event horizon $\d_1 X$ determines the pair $(X, \mathcal F(v))$, up to a homeomorphism  of $X$ which is the identity on the event horizon. \smallskip

When $v$ has property $\mathsf A$ from Definition \ref{A-property}, then the causality relation on the event horizon $\d_1 X$ determines the pair $(X, \mathcal F(v))$, up to a smooth diffeomorphism of $X$ which is the identity on the event horizon. 
\hfill $\diamondsuit$  
\end{theorem}

\noindent{\bf Remark 3.6.}
We do not claim that the reconstruction of the event manifold $X$ from the causality map also allows for the reconstruction of its slicing by the fixed-time frames! \hfill $\diamondsuit$
\smallskip

In turn, Theorem \ref{th3.6} has has the following interpretation:

\begin{corollary}\label{cor4.5}{\bf(The topological rigidity of continuations for ODEs)}\hfill\break 
\noindent Let $Y$ be a smooth $n$-manifold without boundary and $X \subset Y \times \R$ a compact smooth submanifold of dimension $n+1$.  Let $u_1(t), u_2(t)$, $t \in \R$, be two time-dependent smooth vector fields on $Y$ such that $u_1(y, t) = u_2(y, t)$ for all ``external" events $(y, t) \in (Y \times \R) \setminus X$. Assume that $v_1 = (u_1, 1)$ and $v_2 = (u_2, 1)$ are boundary generic fields on $X$. Suppose  that the two causality maps, $C_{v_1}: \d_1^+X \to \d_1^-X$ and $C_{v_2}: \d_1^+X \to \d_1^-X$ are  identical. \smallskip

Then the two dynamical systems, generated by $v_1$ and $v_2$ on $Y \times \R$, are topologically equivalent via a homeomorphism 
which is the identity on the event horizon. \smallskip 

When $v_1$ has property $\mathsf A$ (in particular, when the field $v_1$ is concave or convex with respect to $\d_1 X$), then the two dynamical systems are equivalent via a smooth diffomorphism which is the identity on the event horizon.
\hfill $\diamondsuit$
\end{corollary}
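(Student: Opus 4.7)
\smallskip

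\noindent\textbf{Proof plan for Corollary \ref{cor4.5}.}

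The natural strategy is to apply the Smooth Holography Theorem \ref{th4.2} to the two pairs $(X,v_1)$ and $(X,v_2)$, taking as boundary data the identity map $\Phi^\d = \mathrm{id}_{\d_1 X}$. Since by hypothesis $C_{v_1} = C_{v_2}$, the tautological relation $\mathrm{id}\circ C_{v_1}=C_{v_2}\circ\mathrm{id}$ holds, so Theorem \ref{th4.2} produces a $C^\infty$-diffeomorphism $\Phi_X : X \to X$ which equals the identity on $\d_1 X$ and carries each $v_1$-trajectory onto a $v_2$-trajectory with matching orientation. The candidate global map is then
\[
\Phi(z)\;:=\;\begin{cases} \Phi_X(z), & z \in X,\\ z, & z \in (Y\times\R)\setminus X.\end{cases}
\]
It is bijective, agrees on $\d_1 X$ on both sides, and hence is at least a homeomorphism identity on the event horizon.

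The trajectory correspondence is immediate on the two pieces: outside $X$ the hypothesis $u_1 = u_2$ gives $v_1 = v_2$, so $\Phi=\mathrm{id}$ trivially maps the $v_1$-foliation to the $v_2$-foliation; inside $X$ this is the conclusion of Theorem \ref{th4.2}. Every oriented $v_i$-orbit in $Y\times\R$ either lies entirely outside $X$ or meets $\d_1 X$ in the finite ordered set prescribed by $C_{v_i}$; since $\Phi$ is the identity on that set and respects the orientation on each side, the inside and outside pieces of an orbit glue properly, so $\Phi$ sends $\mathcal F(v_1)$ to $\mathcal F(v_2)$ globally.

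The main obstacle is the $C^\infty$ regularity of $\Phi$ across $\d_1 X$. I would handle this by making the construction of $\Phi_X$ in the proof of Theorem \ref{th4.2} canonical with respect to the ambient structure: because $\mathcal L_{v_i}(t)=1>0$ for the ambient time coordinate $t$ on $Y\times\R$, the restriction $f := t|_X$ is a simultaneous gradient-like function for both $v_1$ and $v_2$. Choosing $f_1=f_2=f$ in the proof of Theorem \ref{th4.2}, the resulting $\Phi_X$ preserves the $t$-coordinate and, on each orbit $\g$ issuing from a point $p\in\d_1 X$, sends the $v_1$-integral curve through $p$ to the $v_2$-integral curve through the same $p$, parametrized by the common height $f$.

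The decisive observation is now that smoothness of $v_1,v_2$ on the whole manifold $Y\times\R$, together with $v_1\equiv v_2$ on the open complement of $X$, forces $v_1$ and $v_2$ to have identical $\infty$-jets along $\d_1 X$. Consequently the $v_1$- and $v_2$-flow maps out of $\d_1 X$ have equal Taylor expansions of every order at boundary points, and the $f$-height-matching formula for $\Phi_X$ has infinite-order contact with $\mathrm{id}$ normal to $\d_1 X$ from inside $X$. Hence $\Phi$ is $C^\infty$ across $\d_1 X$; repeating the argument for $\Phi^{-1}$ (swapping $v_1$ and $v_2$) shows $\Phi$ is a $C^\infty$-diffeomorphism. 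Verifying rigorously that \emph{all} normal jets of $\Phi_X$ at $\d_1 X$ vanish relative to the identity is the delicate point; I expect this to reduce, via the canonical form of $\Phi_X$ above, to a routine but careful application of the chain rule combined with the jet-matching of the two flows on $\d_1 X$.
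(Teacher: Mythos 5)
The paper presents Corollary \ref{cor4.5} with a terminal $\qed$ and no argument, treating it as an immediate reading of Theorem \ref{th4.3} (itself a rephrasing of Theorem \ref{th4.2}). Your strategy --- apply Theorem \ref{th4.2} with $\Phi^\d = \mathrm{id}$ to obtain $\Phi_X : X \to X$ with $\Phi_X|_{\d_1X} = \mathrm{id}$, then glue with the identity outside $X$ --- is the natural realization of what the paper asserts, and your normalization $f_1 = f_2 = t|_X$ is exactly the right move: since $\mathcal L_{v_i}(t) \equiv 1 > 0$, this makes $\Phi_X$ near a \emph{transversal} boundary point equal to the composite $\psi_2 \circ \psi_1^{-1}$ of the two flow maps, so the identity-jet claim there does follow from the equality of $\infty$-jets of $v_1$ and $v_2$ along $\d_1 X$ (which you correctly deduce from $v_1 \equiv v_2$ on the open complement of $X$).

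The gap is in the step ``Consequently the $v_1$- and $v_2$-flow maps out of $\d_1 X$ have equal Taylor expansions\ldots hence $\Phi_X$ has infinite-order contact with $\mathrm{id}$.'' That chain-rule argument is sound at $\d_1^\pm X^\circ$, where the flow-box near $\d_1 X$ is short, but it breaks down at the tangency strata $\d_j X$, $j \ge 2$. For a concave tangency point $q$, the trajectory $\gamma_q$ enters and exits $X$ far from $q$ and spends most of its length deep in the interior, where $v_1 - v_2$ is of order one; the Taylor coefficients of $\Phi_X - \mathrm{id}$ at $q$ are then integrals of $v_1 - v_2$ along the \emph{entire} trajectory, and these are not controlled by the vanishing jet of $v_1 - v_2$ at $\d_1 X$ alone. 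What does make them vanish is the hypothesis $C_{v_1} = C_{v_2}$ evaluated on the one-parameter families of nearby transversal trajectories: differentiating those integral identities in the transversal coordinate recovers (and kills) precisely the normal derivatives of $\Phi_X - \mathrm{id}$ at $q$. So the plan is fundamentally workable, but the ``routine application of the chain rule'' you anticipate must be replaced, at every stratum $\d_j X$ with $j \ge 2$, by an argument that exploits the full causality constraint and not merely the jet agreement of the two fields; this is the genuine content that the paper's $\qed$ sweeps under the rug (or avoids by implicitly only claiming a diffeomorphism of $X$, not of $Y\times\R$).
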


In search for further applications of the Holography Theorem \ref{th3.HOLO}, let us let us pay a brief visit to the Classical Hamiltonian/Lagrangian Mechanics.\smallskip  

Let $TM$ be a tangent bundle of a $n$-dimensional smooth manifold $M$ without boundary, and $q_1, \ldots, q_n,\, \dot q_1, \ldots, \dot q_n$ local coordinates in $TM$. In these coordinates $(q, \dot q)$, the Lagrange function $L: TM \times \R \to \R$ may be written as $L(q, \dot q, t)$. The Euler-Lagrange equations $\big\{\frac{d}{dt} \frac{\d L}{\d \dot q_i} - \frac{\d L}{\d q_i} = 0\big\}_{I\in [1,n]}$ describe the curve $\g = q(t)$ which minimizes the path integral $\int_{t_0}^{t_1} L \, dt$. The Hamiltonian function $H: T^\ast M \times \R \to \R$ is defined by 
$H(p, q, t) := p\cdot \dot q - L(q, \dot q, t)$ with $p = \frac{\d L}{\d \dot q}$.
In these coordinates, the Euler-Lagrange equations transform into the Hamilton system of ODEs: 
\begin{eqnarray}\label{eq4.H}
\dot q =\;\; \frac{\d H}{\d p},\quad
\dot p = -  \frac{\d H}{\d q}  = \frac{\d L}{\d q},\quad 
\frac{\d H}{\d t} = -\frac{\d L}{\d t}.
\end{eqnarray} 


In the canonical coordinates $(q, p, t)$, we consider the vector field 
\begin{eqnarray}\label{eq4.vH}
v_H =_{\mathsf{def}} (\dot q, \dot p, 1) = \Big(\frac{\d H}{\d p},  -\frac{\d H}{\d q},\, 1\Big),
 \end{eqnarray}
 whose projection on $T^\ast M$ is the time-dependent Hamiltonian vector field.
\smallskip

Applying Theorem \ref{th3.6} to the Hamiltonian system (\ref{eq4.H}), we get the following statement.

\begin{corollary}\label{cor4.6} For a smooth manifold $M$ without boundary, consider the smooth Hamiltonian system (\ref{eq4.H}) on $T^\ast M$.  Assume that: 
\begin{itemize}
\item a number $c$ is a regular value of a smooth function $F: T^\ast M \times \R \to \R$,
\smallskip
 
\item the set $$X =_{\mathsf{def}} \{x \in T^\ast M \times \R| \;  F(x) \leq c\}$$ is compact in $T^\ast M \times \R$, \smallskip

\item the vector field $v_H$ from (\ref{eq4.vH}) is boundary  generic with respect to the event horizon  $$\d_1 X =_{\mathsf{def}} \{x \in T^\ast M \times \R| \;  F(x) = c\}.$$  
\end{itemize}

Then the causality map/relation $C_{v_H}$ on the event horizon $\d_1 X$ allows for a reconstruction of the pair $(X, \mathcal F(v_H))$, up to a  homeomorphism of $X$ which is the identity on $\d_1 X$. 

If $\d_3X(v_H) = \emptyset$, then the reconstruction is possible, up to a smooth diffeomorphism.
\hfill $\diamondsuit$
\end {corollary}

\begin{question} \emph{The main unresolved issue here is:} 
``How abundant are the Hamiltonian systems $\{v_H\}_H$ that are traversing and boundary generic (alternatively, traversally generic) with respect to a given event horizon $\d_1 X \subset T^\ast M \times \R$?" \hfill $\diamondsuit$
\end{question} 

It follows from \cite{K2} that if a Hamiltonian field $v_H$ has this property relative to a given $\d_1 X$, then for any Hamiltonian function $\tilde H$ that is $C^\infty$-close to $H$, the vector field $v_{\tilde H}$ also will be traversing and boundary generic with respect to $\d_1X$.
\smallskip

We  know that any non-vanishing gradient-like field $v$ can be $C^\infty$-approximated by a traversally  generic field on $X$ (Theorem 3.5 from \cite{K2}). So the open question is whether an approximation is possible within the universe of Hamiltonian fields. \hfill

\section{On applications of Holography Theorems to geodesic flows}

In \cite{K6}, 
we apply the Holographic Causality Principle to the geodesic flows on the spaces of unit tangent vectors of compact Riemannian manifolds \emph{with boundary}. Such applications include the {\sf inverse geodesic scattering problems} and the {\sf geodesic billiards}. Let us describe briefly the flavor of these applications.\smallskip

Let $M$ be a compact connected $n$-dimensional smooth Riemannian manifold with boundary, and $g$ a smooth Riemannian metric on $M$. Let $SM \to M$ denote the tangent spherical bundle of $M$. Then the metric $g$ induces a geodesic vector field $v^g$, a non-vanishing section in the tangent bundle $T(SM)$ (for example, see \cite{Be} for the definition and basic properties of geodesic flows). 

\begin{definition}\label{def4.1} Let $M$ be a compact connected $n$-dimensional smooth Riemannian manifold with boundary. We say that a Riemannian metric $g$ on $M$ is {\sf non-trapping} if the geodesic vector field $v^g$ on $T(SM)$ admits a smooth differentiable Lyapunov function $F: SM \to \R$ such that $dF(v^g) > 0$. \hfill $\diamondsuit$
\end{definition}

For a non-trapping $g$, any geodesic curve $\g \subset M$ is an image of a closed segment, or is a singleton. The converse is true as well \cite{K6}.  

For non-trapping metrics $g$, and only for such metrics, the causality map $$C_{v^g}: \d_1^+(SM)(v^g) \to \d_1^-(SM)(v^g)$$ is well-defined. In fact, its domain and range are diffeomorphic via the reflection map. \smallskip

We call $C_{v^g}$ {\sf the scattering map} since it takes any pair $(m, v)$, where $m \in \d M$ and a unitary tangent vector $v \in T_m(M)$ points inside $M$ or is tangent to its boundary $\d M$, to the pair $(m', v')$, where $m' \in \d M$ and $v' \in T_{m'}(M)$ points outside $M$ or is tangent to $\d M$. Here $m' \neq m$ is the first point of $\d M$ that lies on the unique geodesic curve $\g \subset M$ that passes trough $m$ in the direction of $v$, and $v'$ is the velocity vector of $\g$ at $m'$.  If, in the vicinity of $m$, $\g \cap M = m$, then we put $C_{v^g}(m, v) =_{\mathsf{def}} (m, v)$.

\begin{definition}\label{def4.2} We say that a metric $g$ on $M$ is {\sf boundary generic} if the geodesic vector field $v^g$ is boundary generic with respect to $\d_1(SM)$ in the sense of Definition \ref{boundary_generic}. \hfill $\diamondsuit$
\end{definition}

In the space of all Riemannian metrics on $M$, the non-trapping metrics and the boundary generic metrics form  open sets. \smallskip

\begin{definition}\label{def4.3} Given two compact smooth Riemannian $n$-manifolds, $(M_1, g_1)$ and $(M_2, g_2)$, consider the geodesic fields $v^{g_1}$ on $SX_1$ and $v^{g_2}$ on $SX_2$, respectively. They generate the oriented 1-dimensional geodesic foliations $\mathcal F(v^{g_1})$  and $\mathcal F(v^{g_2})$.
\begin{itemize}
\item We say that the metrics $g_1$ and $g_2$ are {\sf geodesically smoothly conjugated} if there is a smooth diffeomorphism  $\Phi: SM_1 \to SM_2$ that maps each leaf of $\mathcal F(v^{g_1})$ to a leaf of $\mathcal F(v^{g_2})$, the orientations of the leaves being preserved \smallskip

\item We say that the metrics $g_1$ and $g_2$ are {\sf geodesically topologically conjugated} if there is a homeomorphism $\Phi: SM_1 \to SM_2$  that  maps each leaf of $\mathcal F(v^{g_1})$ to a leaf of $\mathcal F(v^{g_2})$, the map $\Phi$ on every leaf being an orientation-preserving diffeomorphism. 

\hfill $\diamondsuit$
\end{itemize} 
\end{definition}

Applying Theorem \ref{th3.HOLO}, we get the following theorem \cite{K6}.

\begin{theorem}\label{th4.1}{\bf (The topological rigidity of the geodesic flow for the inverse scattering problem)}\smallskip

Let $(M_1, g_1)$ and  $(M_2, g_2)$ be two smooth compact connected Riemannian $n$-manifolds with boundaries,  and let the metrics $g_1$, $g_2$ be non-trapping and geodesically boundary generic. 
 
Assume that the scattering maps $$C_{v^{g_1}}: \d_1^+(SM_1) \to \d_1^-(SM_1)\;\; \text{and} \;\; C_{v^{g_2}}: \d_1^+(SM_2) \to \d_1^-(SM_2)$$ are conjugated by a smooth diffeomorphism $\Phi^\d: \d_1(SM_1) \to \d_1(SM_2)$.  \smallskip

Then the metrics $g_1$ and $g_2$ are geodesically topologically conjugated.\smallskip 

If each component of the boundary $\d M_1$ is either concave or convex with respect to $g_1$, then the two metrics are geodesically  smoothly conjugated.\hfill $\diamondsuit$
\end{theorem}

\begin{corollary}\label{cor4.1} Assume that a smooth compact connected Riemannian manifold $M$ admits a geodesically boundary generic non-trapping Riemannian metric $g$. Then the scattering  map $C_{v^g}: \d_1^+(SM) \to \d_1^-(SM)$  allows for a reconstruction of the $\mathbf\Omega^\bullet$-stratified topological type
of the space $\mathcal T(v^g)$  of un-parametrized geodesics on $M$. \smallskip

If each component of the boundary $\d M$ is either concave or convex with respect to a non-trapping $g$, then $C_{v^g}$ allows for a reconstruction of the 
$\mathbf\Omega^\bullet$-stratified smooth topological type of $\mathcal T(v^g)$, determined by the algebra $C^\infty(\mathcal T(v^g))$ of smooth $v^g$-invariant functions on $SM$ (see Definition \ref{def2.1}).
\hfill $\diamondsuit$
\end{corollary}

We say that manifolds $M$ and $M'$ share the same  {\sf stable  topological/smooth type}, if $M \times S^{n-1}$ and $M' \times S^{n-1}$ are homeomorphic/ smoothly diffeomorphic.\smallskip

Theorem \ref{th4.1} leads to the following statement \cite{K6}:

\begin{theorem}\label{th4.2} Assume that a compact connected $n$-manifold $M$ with boundary admits a boundary generic non-trapping Riemannian metric $g$.  \smallskip

Then the geodesic scattering map $C_{v^g}: \d_1^+(SM) \to \d_1^-(SM)$ allows for a reconstruction of the cohomology rings $H^\ast(M; \Z)$ and $H^\ast(M, \d M; \Z)$, as well as for a reconstruction of the homotopy groups $\{\pi_i(M)\}_{i < n}$. \smallskip

Moreover, the Gromov simplicial semi-norms $\|\sim\|_{\mathbf \D}$  on $H^\ast(M; \R)$ and on $H^\ast(M, \d M; \R)$ (see \cite{Gr}) can be reconstructed form $C_{v^g}$. In particular,  the simplicial volume $\|[M, \d M] \|_{\mathbf \D}$ of the fundamental cycle $[M, \d M]$ can be recovered form $C_{v^g}$.\smallskip

If, in addition, $M$ has a trivial tangent bundle, then the stable topological type of $M$  is also reconstructable from the geodesic scattering map. \hfill $\diamondsuit$
\end{theorem}

In the spirit of Theorem 1.3 from \cite{BCG}, by combining the Mostov Rigidity Theorem \cite{Most} with Theorems \ref{th4.1} and \ref{th4.2}, in \cite{K6} we get  the following result. It is inspired by the image of geodesic motion of a bouncing particle in the complement $M$ to a number of disjoint balls, placed in a closed hyperbolic manifold $N$, $\dim(N) \geq 3$. The balls are placed so ``dense" in $N$ that every geodesic curve hits some ball. Under these assumptions, the probe particle collisions with the boundary $\d M$ ``feel the shape of $N$".

\begin{theorem}\label{th4.5} Let $n \geq 3$.  Consider two closed smooth locally symmetric Riemannian $n$-manifolds, $(N_1, g_1)$ and $(N_2, g_2)$, with negative sectional curvatures. Let a connected manifold $M_i$ ($i = 1, 2$) be obtained from $N_i$ by removing the interior of a smooth codimension zero submanifold $U_i \subset N_i$, such that the induced homomorphism $\pi_1(M_i) \to \pi_1(N_i)$ of the fundamental groups is an isomorphism\footnote{By a general position argument, this the case when $U_i$ has a spine of codimension $3$ at least. In particular, $U_i$ may be a disjoint union of $n$-balls.}.

Let the restriction of the metric $g_i$ to $M_i$ be boundary generic and non-trapping. Assume also that the two geodesic scattering maps $$C_{v^{g_1}}: \d_1^+(SM_1) \to \d_1^-(SM_1), \quad C_{v^{g_2}}: \d_1^+(SM_2) \to \d_1^-(SM_2)$$ are conjugated via a smooth diffeomorphism $\Phi^\d: \d (SM_1) \to \d (SM_2)$\footnote{Thus the boundaries $\d U_1$ and $\d U_2$ are stably diffeomorphic.}.\smallskip

Then $\Phi^\d$ determines a unique diffeomorphism $\phi: N_1 \to N_2$ such that $\phi^\ast(g_2) = c \cdot g_1$ for a constant $c > 0$.\smallskip
\hfill $\diamondsuit$
\end{theorem}

For non-trapping geodesic flows on Riemmanian manifolds $M$ with boundary, the scattering map $C_{v^g}: \d_1^+SM \to \d_1^-SM$ can be composed with the reflections with respect to $\d M$ (according the law ``the angle of incidence is equal to the angle of reflection") to produce the {\sf billiard map} $B_{v^g}: \d_1^+SM \to \d_1^+SM$ . For $B_{v^g}$, arbitrary iterations are available. The dynamics of $B_{v^g}$-iterations is the subject of  flourishing research. In particular, in \cite{K9}, we analyze some ``holographic" properties of the $B_{v^g}$-dynamics.  \smallskip

\end{document}